\theoremstyle{plain}
\newtheorem{theorem}{Theorem}[section]
\newtheorem*{theorem*}{Theorem}
\newtheorem{corollary}{Corollary}[theorem]
\newtheorem{lemma}[theorem]{Lemma}
\newtheorem*{lemma*}{Lemma}
\newtheorem{proposition}[theorem]{Proposition}
\theoremstyle{definition}
\newtheorem{definition}{Definition}[section]
\theoremstyle{remark} 
\newtheorem{remark}[definition]{Remark}
\newcommand{\R}{\mathbb{R}}
\newcommand{\N}{\mathbb{N}} 
\definecolor{darkgreen}{rgb}{0,0.7,0}
\definecolor{darkred}{rgb}{0.5,0,0}
\definecolor{ultramarine}{rgb}{0.07, 0.04, 0.56} 
\definecolor{darkgreen}{rgb}{0.0, 0.5, 0.0}
\theoremstyle{definition}
\title[A mesoscale regularization: Lennard-Jones drifted SDE vs MCKEAN-VLASOV dynamics ]{Integrability properties and stochastic McKean-Vlasov dynamics with singular Lennard-Jones drift: a mesoscale regularization}
\author{ Ernesto M. Greco}
\address{Ernesto M. Greco:  
Department of Mathematics, University of Milano, Via C. Saldini, 50, 2011, Milano. Italy}
\email{Ernesto.Greco@unimi.it} 
\author{Daniela Morale }
\address{Daniela Morale:  
Department of Mathematics, University of Milano, Via C. Saldini, 50, 2011, Milano. Italy}
\email{Daniela.Morale@unimi.it} 
\begin{document}

\begin{abstract}
We study the convergence of the empirical measure of moderately interacting particle systems subject to  singular  forces derived  by Lennard-Jones potential.  Although the classical Lennard-Jones force is widely used in molecular dynamics,  analytical results are not available. We consider a Lennard-Jones potential with free parameters in the McKean-Vlasov framework and proceed with a regularization at the mesoscale letting the particles interact moderately. We prove the well-posedness of the McKean-Vlasov SDE involving such singular kernels and the convergence of the empirical measure towards the solution of the McKean-Vlasov Fokker-Planck PDE, by means of  a semigroup approach. We  derive both the range of parameters characterizing the aggregation and repulsive force and the mesoscale order for which the convergence is achieved, by obtaining the right integrability regularity of the drift.

\medskip

\noindent{\bf Keywords: }  stochastic interacting particles,    SDE, mesoscale, singular kernels,  multiscale, mean-field approximation, Lennard-Jones potential
 \smallskip
 
\noindent{\bf MSC: }60H10,  60H30, 60K30, 60K35, 60J60, 60J75 
\end{abstract}

\maketitle

\section{Introduction}

This work concerns the analytical study at different scales of a stochastic dynamical  system, characterized by an attractive-repulsive interaction that is singular at the origin. In particular, we consider the case in which the force $K$  exerted on  each particle is the one derived from the Lennard-Jones potential $\Phi$ with free parameters $(a,b)\in \mathbb R^2_+$, that reads  \cite{Wales_2024}
\begin{equation}\label{eq:JL_potential_force}
\Phi(x)=\epsilon\left(\displaystyle\frac{R_0^a}{a|x|^a} - \frac{R_0^b}{b|x|^b}\right), \qquad K(x)=-\nabla \Phi(x)=  \epsilon\left(\frac{R_0^a}{|x|^{a+1}}-\frac{R_0^b}{|x|^{b+1}} \right) \frac{x}{|x|}.
\end{equation}
The parameter $\epsilon \in \mathbb R_+$ determines the strength of the interaction and  $R_0\in \mathbb R_+$ is a fitting parameter which determines the range of the interaction. The negative term  of the potential describes the attraction, while the term with the $a$ power represents  the repulsive contribution due to the interaction between particles.  Although the classical Lennard-Jones force is widely used in molecular dynamics \cite{2005_Nedea_molecular_dynamics}, to the best of our knowledge, the mathematical literature offers few results on this type of force, largely confined to modelling approaches such as the interaction of endothelial cells in angiogenesis \cite{2023_FLR}, of ions into ion channels  \cite{2016_MZCJ}  or gypsum and  sulphuric acid molecules in the sulphation, a key phenomenon driving marble deterioration  \cite{2025_Mach2023_MRU,2025_JMMRU_Arxiv}. 

\smallskip 
 The novelty of this work lies in the fact that,  as far as the authors are aware, it provides for the first time rigorous results across different scales for diffusion processes with transport driven by a singular drift, specifically the Lennard-Jones force with free parameters. At the microscale, we establish existence and uniqueness of a strong solution to a McKean–Vlasov stochastic differential equation (MKV-SDE), describing the dynamics of a typical Brownian particle interacting at large scale with a field that evolves at the macroscale through an associated diffusion–advection PDE. Existence, uniqueness, and appropriate regularity of the macroscopic field are exploited in the analysis. The results are closely tied to the admissible range of parameters characterizing the singular drift.  At the microscale we further consider  a system of   a finite number $N \in \mathbb{N}$ of Brownian particles that pairwise interact at a mesoscale.  The link between the different scales is proved by showing the convergence in probability of the empirical particle density   associated with the particle system to the unique mild solution of the Fokker–Planck equation, that is through a mesoscale regularization approach of prescribed order, a law of large numbers is established.

\medskip
 
In \cite{2025_MRU_arxiv}   via a different regularization approach,  as in  \cite{2016_Liu_Yang}, the authors consider a more probabilistic analysis and the  result of well-posedness of a more general diffusion SDE with the free parameters Lennard-Jones force. They prove that there exists a unique global strong solution since there is no blow-up in a finite time.  The interesting is that no restriction upon the parameters of the kernel is required. However, they restrict the analysis to the well-posedness of the SDE only.  Here, from one hand we deal with a McKean-Vlasov SDE with the drift given by a functional of the Lennard-Jones potential only and study the right integrability properties of the kernel in order to  establish the well-posedness of the associated PDE and prove limiting results.

\medskip

Hence, the first part of the paper focuses on the analysis of the integrability properties of the Lennard–Jones force, establishing the local integrability exponent  $p$ in terms of the free parameters $(a,b)$ in \eqref{eq:JL_potential_force} both in a neighbourhood of and far from the singularity. Additionally, we derive estimates for the convoluted potential in H\"older spaces.

\medskip

The second part of the work is devoted to a complete  analysis of the stochastic dynamics at the different scales. Fixed a time horizon  $T>0$, let  $\left(\Omega,\mathcal{F},\mathbb F=(\mathcal{F}_{t})_{t\in [0,T]},\mathbb{P}\right)$ be a filtered probability space and $W=(W_t)_{t\in[0,T]}$ is an $\mathbb F$-Brownian motion.
At the microscopic level, we study the following MKV-SDE  with a Lennard-Jones force \eqref{eq:JL_potential_force} for  $ a>b>0$. \begin{equation}\label{SDE:MK Intro}
        dX_t = K\ast u(t,X_t)dt+\sqrt{2}dW_t, \quad 0< t\le T, \\ 
\end{equation}
where for  any $t\in (0,T]$, $u(t,\cdot)$ is the marginal density of $X_t$, i.e.  $
\mathcal{L}(X_t)=u(t,\cdot)  dx; $  furthermore, $u$ is  the mild solution to the following  Fokker-Planck PDE, for any
$(t,x)\in (0,T]\times \mathbb R^d$ 
\begin{equation}\label{PDE: FP}
 \partial_t u(t,x) = \Delta u(t,x) -\nabla \cdot \left (u(t,x)(K \ast u (t,x)) \right).
\end{equation}

With a singularity at the origin of order $a < d-1$, by standard arguments we have proven a unique in law solution of \eqref{SDE:MK Intro}. Integrability of the Lennard-Jones potential  arise when we  study the mild solution of the associated Fokker-Planck equation, above all related to  its regularity, either in specific $L^p$ or Bessel space. We may improve the existence result by considering a lower order of the singularity, for which the MKV-SDE admits a pathwise uniqueness  strong solution.  The analysis is based on a semigroup approach. We derive the range of the free parameters $a,b$ characterizing the kernel $K$,  in order to obtain specific integrability properties of the Lennard-Jones force \eqref{eq:JL_potential_force}.

\smallskip

Given this framework, we rigorously link the aggregative-repulsive dynamics at different scales by considering an $N$ particle system in which the particle location processes $X_t^1,\ldots,X^N_t$  defined on the probability space on which a family  of independent standard $\mathbb{R}^d$-valued Brownian motions  $\{(W_{t}^{i})_{t\in[0,T]}\}$, for $i=1,\ldots, N$ are defined. Particle dynamics are given by the empirical version of equation \eqref{SDE:MK Intro} , i.e. for any 
 $t\in [0,T]$,  
\begin{equation}\label{eq:SDE_Meso}
     d X_t^{i}= K_N\ast\mu^N_t (X_N^i)  dt+ \sqrt{2}dW_t^i, \quad 0<t \le T, \quad   1 \le  i \le N,
\end{equation}
with the initial data $(X_0^i)_{i=1}^N$.  We introduce a mesoscopic scale determined by a rescaling parameter
$\alpha \in (0,1)$,  by means of a rescaling of the kernel $K$ in \eqref{eq:JL_potential_force}. Indeed,   the  force  exerted on  a particle located in   $  x \in \mathbb{R}^d$ is given by  $
    K_N(x)= K\ast V_N(x),$ where $ 
    V_N(x)=N^{d\alpha} \, V(N^\alpha x),$ with $  \alpha \in (0,1), $
and $V$ a sufficiently smooth, compactly supported probability density function.  The equation \eqref{eq:SDE_Meso} is of the McKean-Vlasov type depending upon the empirical measure
\begin{equation}\label{eq:empirical_measure}
\mu^N_t=\frac{1}{N}\sum_{i=1}^N \varepsilon_{X_t^{i}},
\end{equation}
where $\varepsilon_x$ is the Dirac measure localized in $x\in\mathbb R^d$ and $\mu^N_t$ denotes the marginal empirical measure on $C([0,T];\mathbb{R}^d)$ of the $N$ particles. The interaction at the mesoscale, called also moderate interaction, has been introduced by K. Oelschl\"ager in  \cite{2009_Capasso_Morale,1985_Oelschlaeger,1990_Oelschlaeger}; it  relies on a smoothing of the interaction kernel at the scale  $N^{-\alpha}$, that is, for $|x|$  sufficiently large  compared to $N^{-\alpha}$,  the kernel $K_N$ is very close to $K$ \cite{2009_Capasso_Morale}. We need to identify the right order $\alpha$ to get the mean field behaviour in the limit.

\smallskip
 The problem concerning the well-posedness and the analysis of particle systems with regular repulsive-attraction forces is widespread understood \cite{2024_anita,2007_morale_Burger_VK,2009_Capasso_Morale,1985_Oelschlaeger}. They deal with  Lipschitz continuity of the drift coefficient, as in \cite{1985_Oelschlaeger,Meleard_Coppoletta,Meleard_1996,Flndoli_2019} and the propagation of chaos holds. The interest reader may also refer to \cite{2014_Jabin} for an overview on mean field particle systems. See also  \cite{2016_Russo,2024_morale_tarquini_ugolini}   for probabilistic representations of non-conservative PDEs, or   \cite{2023_oliveira_richard_tomasevic}. 
For the singular drift, the cases of drift derived from purely repulsive or purely attractive potentials is more established in the literature. In particular, \cite{2016_Liu_Yang} shows that for purely repulsive forces with a Coulomb kernel in dimension greater than one, particle  almost surely do not collide, leading to the well-posedness of the model. Conversely, in \cite{fournier_2017},  it is proved that for purely attractive forces with a Keller-Segel-type kernel in two dimensions, the system is almost surely not well-posed.

\smallskip

As far as we know, as stressed also in \cite{2023_FLR}, if  we specify the potential as the
Lennard-Jones potential  not many results are available, and mean field limit of particles interacting through the Lennard-Jones potential is out of reach in the current state of the literature; this is the first result in which both well-posedness and  mean field results are achieved.
We focus on establishing the convergence of the empirical measure \eqref{eq:empirical_measure}. By defining the empirical density as  $u_N(t,\cdot):=V_N \ast \mu_t^N$, for $t\in [0,T]$,  system \eqref{eq:SDE_Meso}, may be re-written for any $t \in (0,T]$ as  
\begin{equation*}\label{eq:SDE_density}
     dX_t^{i}=\displaystyle  K \ast u_N(t
     ,X_t^{i})dt+ \sqrt{2}dW_t^i, \quad  1 \le i \le N.
\end{equation*}

 The link between micro and mascro scales  is proved by showing the convergence in probability of
the empirical particle density $u_N(t,\cdot)$ associated with the particle system to the unique mild solution $u$
of the Fokker–Planck equation.  The empirical solution $u_N$ is solution of a stochastic partial differential equation in which the random part 
is a  stochastic convolution integral that is not a martingale. Hence, in order to get a priori estimates, we cannot simply apply a Burkholder–Davis–Gundy (BDG) type inequality in the UMD  (unconditional martingale differences) Banach spaces to control the integral, uniformly in time.
However, by an extension of the Garsia–Rodemich–Rumsey Lemma in the case of complete metric space, and under a continuity condition,  a bound in  specific moments spaces may be proven \cite {2010_Friz,2023_oliveira_richard_tomasevic,2021_Richard}, even tough not uniform in $N$.  
To handle  a priori estimates uniformly in $N\in \mathbb N$, we consider the typical procedure of the cut-off of the drift. Indeed, the introduction of a cut-off at the particle level is standard in the
literature on mean-field limits for interacting particle systems with
singular kernels; see for instance \cite{2014_Jabin,2015_HaurayJabin,2016_Liu_Yang,2019_LiuYang}.
Such a cut-off ensures global well-posedness of the microscopic dynamics,
while coinciding with the original system with high probability on finite
time intervals. Here, we apply a cut-off procedure to the drift and introduce an auxiliary particle system, aimed at addressing two different requirements.
On the one hand, the force field associated with the limiting density satisfies an a priori uniform bound, which sets a natural scale for the interaction drift. On the other hand, the cut-off is chosen to ensure that the empirical density remains uniformly close to the limiting density, independently of the number of particles. This leads to a natural choice of the threshold, depending on the regularity of the limiting density and a prescribed tolerance parameter. The a priori estimates, restrict the range of the mesoscale rate $\alpha$, depending on the integrability properties of the Lennard-Jones force, which in turn depends on the strength of the singularity. The result is consistent with physical intuition: stronger singularities require a larger mesoscale in order to effectively control particle interactions and to obtain a sufficiently regular weighted average in the drift term.

\smallskip
We prove strong convergence of the empirical density of the cut-off system to the solution of the Fokker-Planck PDE. Finally, convergence in probability of the original $u_N$ empirical density to $u$ is proven. We calculate the rate of convergence: it is slower than the classical $1/2$, as expected, 
since it is necessarily smaller than the order associated with the 
mesoscale, which is responsible for the localization of the drift as 
$N \to +\infty$. Moreover, the rate is slower than $1/2$ due to the combined 
effect of the mesoscale and the singularity of the drift. Therefore, the 
result is fully consistent.
\smallskip

This paper is organized as follows. 

\smallskip

 Section \ref{Sec:LJ pot} presents a brief discussion on the classical  Lennard-Jones potential and its relation with the free parameters Lennard-Jones one. An extensive study of the integrability properties of the kernel is carried out. The right ranges of the parameters of the force are determined in order to have different integrability properties of the convolution operator induced by the kernel.  
 In Section \ref{Sec:PDE},  we discuss  the well-posedness of the dynamics both at the macro and micro scales. Existence and uniqueness  of the solution of the nonlinear Mckean-Vlasov type Fokker–Planck equation \eqref{PDE: FP} via the semigroup approach is proven.  Weak existence and then strong solution of the McKean–Vlasov equation \eqref{SDE:MK Intro} by constructing the solution by Girsanov's theorem and recognizing an associated martingale problem are achieved. 
In Section \ref{Sec:ParSys}, the model of Brownian particles moderately interacting via a Lennard-Jones force at a mesoscale is derived and a priori estimates are given. Section \ref{Sec:SDE-PDE-Uniformly-regularized-drift}  is devoted to the study of the cut-off dynamics and a priori estimates are given. Finally,  Section \ref{sec:convergence}   presents the convergence results.

\section{Convolution operators via the general Lennard-Jones force: integrability and continuity properties.}\label{Sec:LJ pot}

The classical Lennard-Jones potential models the attraction due to  the Van der Waals forces, and repulsion between atoms nuclei, and this and its relative force are given by 
\begin{equation}\label{caption:eq:lennard_jones_classical}
\widetilde\Phi(x) = 4\tilde\epsilon \left(\left(\frac{R_0}{|x|}\right)^{12} - \left(\frac{R_0}{|x|}\right)^6\right), \qquad  \widetilde{K}(x) = -\nabla\widetilde\Phi(x)=24\tilde\epsilon R_0^6\left(\frac{2R_0^6}{|x|^{13}}\ -\frac{1}{|x|^7} \right)\frac{x}{|x|}.  
\end{equation}

The interest reader to the application of such  potential may refer to \cite{Wales_2024}.

\begin{figure}[h!]
  \centering
  \begin{subfigure}[t]{0.45\textwidth}
      \centering
      \includegraphics[width=0.8\textwidth]{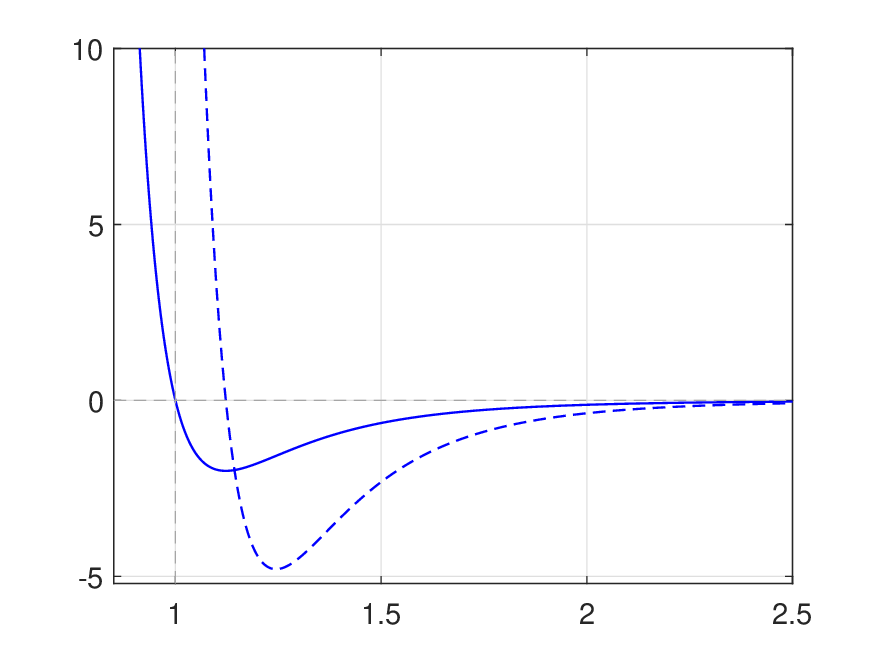}
      \caption{ }
            \label{Pot:LJ}
  \end{subfigure}
  \quad
  \begin{subfigure}[t]{0.45\textwidth}
      \centering
      \includegraphics[width=0.8\textwidth]{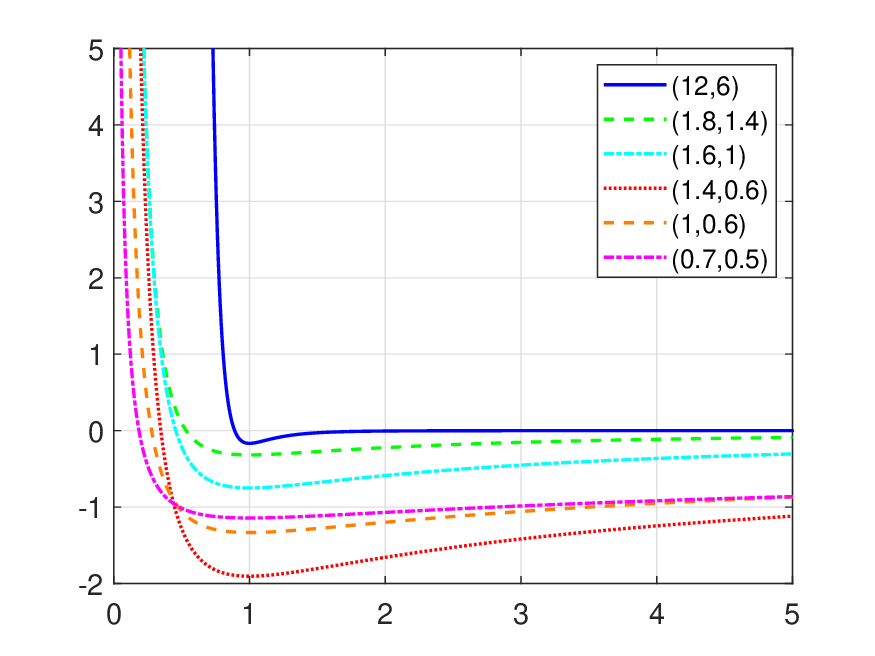}
      \caption { }
      \label{Pot:LJab}
     
  \end{subfigure}
  \caption{(a) Classical Lennard-Jones potential $\widetilde{\Phi}$ (solid line) and the related force $\widetilde{K}$  (dashed line) as in \eqref{caption:eq:lennard_jones_classical}.   (b) Lennard-Jones potential with free parameters  \eqref{eq:JL_potential_force}, for different set $(a,b)$.  Parameters: $\epsilon=\tilde\epsilon=2, R_0=1$. }
\end{figure} 

Figure \ref{Pot:LJ} shows both the behaviour of the classical Lennard-Jones potential and its related force, while Figure \ref{Pot:LJab} illustrates the Lennard-Jones potential with free parameters $(a,b)$, for different pairs of parameters.   
For the classical Lennard-Jones potential, $R_0$ represents the distance at which the potential energy is zero, and the minimum of the potential is equal to $ -\tilde\epsilon$ and is located at $ R_0 2^{{1}/{6}}$. In the case of the Lennard-Jones potential with free parameters, the potential is zero at the location $R_0 \left(\frac{b}{a}\right)^{\frac{1}{a - b}}$, and   its minimum is $ \frac{-\varepsilon(a-b)}{ab}$ in $R_0$. Hence, in the Lennard-Jones potential with free parameters, from a physical point of view, it would be appropriate to consider $\epsilon=\tilde\epsilon ab/(a-b)$.

Similar argument may be applied to $R_0$.

Since the classical version exhibits a singularity that is too strong at the origin, which, as we will show, requires physically uninteresting spatial dimensions for our purposes, throughout this work, we refer to the Lennard-Jones potential as the version with free parameters given in \eqref{eq:JL_potential_force}.

\subsection{Local integrability properties of the Lennard-Jones force}

The Lennard–Jones force kernel \eqref{eq:JL_potential_force} is locally integrable, a consequence of its Riesz-type structure. Any strengthening of this integrability imposes constraints that depend on the kernel parameters.
\begin{proposition}\label{Prop:K_L1_KN_bounded}
Let $d \ge 2$. Let $K$ be the general Lennard-Jones force \eqref{eq:JL_potential_force} with parameters  $d-1>a>b>0$.   Then,  $K \in L^1_\text{loc}(\mathbb{R}^d)$.  
\end{proposition}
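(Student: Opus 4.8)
The plan is to estimate the $L^1$ norm of $K$ over an arbitrary ball $B_R(0)$, since local integrability only needs to be checked near the singularity at the origin (away from $0$ the kernel is smooth and bounded on compacts). Write
\[
|K(x)| = \epsilon\left|\frac{R_0^a}{|x|^{a+1}} - \frac{R_0^b}{|x|^{b+1}}\right| \le \epsilon R_0^a\,|x|^{-(a+1)} + \epsilon R_0^b\,|x|^{-(b+1)},
\]
so by the triangle inequality it suffices to show that each of the two Riesz-type terms $|x|^{-(a+1)}$ and $|x|^{-(b+1)}$ is locally integrable. Since $a > b > 0$, the dominant (most singular) term near the origin is $|x|^{-(a+1)}$, so the binding constraint comes from that one.

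The key computation is the standard polar-coordinates integral: for $\gamma \in \mathbb{R}$,
\[
\int_{B_R(0)} |x|^{-\gamma}\,dx = \sigma_{d-1}\int_0^R r^{-\gamma}\,r^{d-1}\,dr = \sigma_{d-1}\int_0^R r^{d-1-\gamma}\,dr < \infty \iff d - 1 - \gamma > -1 \iff \gamma < d,
\]
where $\sigma_{d-1}$ is the surface measure of the unit sphere. Applying this with $\gamma = a+1$ requires $a + 1 < d$, i.e. $a < d - 1$, which is exactly the hypothesis; and with $\gamma = b+1$ the condition $b < d - 1$ follows a fortiori from $b < a < d-1$. Hence both terms are in $L^1(B_R(0))$ for every $R > 0$, and combined with boundedness of $K$ on annuli $\{r \le |x| \le R\}$ away from the origin (hence on all of $\mathbb{R}^d \setminus B_r(0)$ intersected with any compact set), we conclude $K \in L^1_{\mathrm{loc}}(\mathbb{R}^d)$.

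There is essentially no obstacle here; the statement is elementary and the only thing to be careful about is bookkeeping the exponents $a+1$ versus $b+1$ correctly (the force carries one more power of $|x|$ in the denominator than the potential, because of the gradient), and noting that the hypothesis $a > b$ guarantees that controlling the $a$-term automatically controls the $b$-term. I would also remark in passing that the same polar-coordinate bound shows the classical Lennard–Jones force $\widetilde K$ with exponents $13$ and $7$ would need $d > 13$ to be locally integrable, which motivates the restriction to the free-parameter version — though this remark is not needed for the proof itself.
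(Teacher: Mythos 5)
Your proof is correct and follows essentially the same route as the paper's own argument: bound $|K(x)|$ by the sum of the two Riesz-type terms $\epsilon R_0^a|x|^{-(a+1)}+\epsilon R_0^b|x|^{-(b+1)}$, pass to polar coordinates, and read off that the radial integral is finite precisely when $a+1<d$, with the $b$-term controlled a fortiori since $b<a$. The paper merely splits the argument into the cases $0\notin D$ and $0\in D$ and writes out the spherical Jacobian explicitly, so nothing further is needed.
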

\begin{proof} A detailed proof is in the appendix \ref{appendix:proof:prop_Prop:K_L1_KN_bounded}.
\end{proof}

The next result aims to determine sufficient conditions for the local integrability of the kernel $K$. This becomes important since the kernel always appears in a convolution form, so that a natural way to find right estimates is to apply Young's inequality. For this reason, we  find suitable orders of integrability around and outside the singularity, depending on the couple of  parameters $(a,b)$ characterizing  \eqref{eq:JL_potential_force}.

\begin{proposition}[LJ force local integrability]\label{Prop:integrability_pq}
Let $K$ the Lennard-Jones force  \eqref{eq:JL_potential_force}.  Given $ \nu \in \mathbb R_+$ fixed,   $B(0,\nu)$ denotes the ball of radius $\nu$.  Let $d \ge 2$,  and  $d-1>a>b>0$ fixed.  
  \begin{enumerate}[label=\roman*)]
      \item If $p\in \displaystyle\left[1,\frac{d}{a+1}\right[$, then
$K\in L^p (B(0,\nu))$. 
\item If   $q\in \left]\displaystyle\frac{d}{b+1},+\infty\right]$, then
 $K\in L^q(B^c(0,\nu))$. 
  \end{enumerate}
\end{proposition}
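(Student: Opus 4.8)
The plan is to estimate the two parts of the Lennard--Jones force separately, since $K(x) = \epsilon R_0^a |x|^{-(a+1)} \tfrac{x}{|x|} - \epsilon R_0^b |x|^{-(b+1)} \tfrac{x}{|x|}$ is a difference of two radial Riesz-type kernels, and $|K(x)| \le \epsilon R_0^a |x|^{-(a+1)} + \epsilon R_0^b |x|^{-(b+1)}$. The whole argument reduces to the elementary fact that, for $\gamma > 0$, the function $x \mapsto |x|^{-\gamma}$ lies in $L^r$ of a ball $B(0,\nu)$ iff $r\gamma < d$, and in $L^r$ of the complement $B^c(0,\nu)$ iff $r\gamma > d$ (with the endpoint $r=\infty$ always fine on $B^c$), both checked by passing to polar coordinates and testing convergence of $\int_0^\nu \rho^{d-1-r\gamma}\, d\rho$ or $\int_\nu^\infty \rho^{d-1-r\gamma}\, d\rho$.

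For part (i), I would fix $p \in [1, d/(a+1))$ and bound $\|K\|_{L^p(B(0,\nu))} \le \epsilon R_0^a \|\,|x|^{-(a+1)}\|_{L^p(B(0,\nu))} + \epsilon R_0^b \|\,|x|^{-(b+1)}\|_{L^p(B(0,\nu))}$ by Minkowski's inequality. The first term is finite precisely because $p(a+1) < d$; the second term is then automatically finite as well, since $b < a$ forces $p(b+1) < p(a+1) < d$. So near the singularity the dominant (more singular) term is the repulsive one with exponent $a+1$, and it dictates the admissible range of $p$.

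For part (ii), I would fix $q \in (d/(b+1), \infty]$. On $B^c(0,\nu)$ both kernels are bounded, so the $q=\infty$ case is immediate; for $q < \infty$ write $\|K\|_{L^q(B^c(0,\nu))} \le \epsilon R_0^a \|\,|x|^{-(a+1)}\|_{L^q(B^c(0,\nu))} + \epsilon R_0^b \|\,|x|^{-(b+1)}\|_{L^q(B^c(0,\nu))}$. Now the decay at infinity matters: $\|\,|x|^{-(b+1)}\|_{L^q(B^c(0,\nu))} < \infty$ iff $q(b+1) > d$, which is our hypothesis, and then $q(a+1) > q(b+1) > d$ gives the first term for free. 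So far from the singularity the dominant (less decaying) term is the attractive one with exponent $b+1$, which dictates the admissible range of $q$.

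I do not expect any real obstacle here; the only points requiring a word of care are the triangle-inequality splitting (so that the two Riesz pieces can be handled independently), the observation in each case that the hypothesis controls the ``worse'' of the two terms while the other follows from $a > b$, and the polar-coordinate integrability computation together with the endpoint discussion ($p=1$ is always admissible since $a+1 < d$ by the standing assumption $a < d-1$; $q = \infty$ is always admissible on $B^c$).
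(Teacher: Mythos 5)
Your proposal is correct and follows essentially the same route as the paper: reduce to the two Riesz-type pieces $|x|^{-(a+1)}$ and $|x|^{-(b+1)}$ and check their integrability near the origin and at infinity by polar-coordinate power counting, with the hypothesis on $p$ (resp.\ $q$) controlling the worse term and $a>b$ handling the other. Your triangle/Minkowski splitting is in fact a slight streamlining of the paper's argument, which instead keeps the exact modulus $\bigl|R_0^a r^{-(a+1)}-R_0^b r^{-(b+1)}\bigr|$, splits the radial integral at $r=R_0$ (hence distinguishes $R_0\le\nu$ from $R_0>\nu$), and invokes the asymptotics of the integrand near $0$ — all of which your bound renders unnecessary.
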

\begin{proof}
Let first consider $R_0\le  \nu$. 
By applying the spherical coordinate change, and  observing that   $R_0^br^{-(b+1)}>R_0^ar^{-(a+1)}$ if $r>R_0$, we have 
    \begin{equation*}
    \begin{split}
    \displaystyle\int_{B(0,\nu)} \left|K(x)\right|^p \ dx =&\, \epsilon\int_{B(0,\nu)}\left|R_0^a|x|^{-(a+1)}-R_0^b|x|^{-(b+1)}\right|^p  \ dx\\
    = &\,C_\epsilon\int_0^\nu 
        \left|R_0^ar^{-(a+1)}-R_0^br^{-(b+1)}\right|^p  \ r^{d-1} \ dr.
        \\
        = &\,C_\epsilon\displaystyle\int_0^{R_0} \left(R_0^ar^{-(a+1)}-R_0^br^{-(b+1)}\right)^p \ r^{d-1} \ dr\\&+C_\epsilon\int_{R_0}^\nu \left(R_0^br^{-(b+1)}-R_0^ar^{-(a+1)}\right)^p \  r^{d-1} \ dr\\
        =&\, I_1+I_2.
    \end{split}
    \end{equation*}

The integral term $I_2$ is  finite, because the integrands are continuous functions over a compact set.
    
As $I_1$ concern,  if $g(r)=R_0^ar^{-(a+1)}-R_0^br^{-(b+1)}$, then $     g(r) \sim_0 R_0^ar^{-(a+1)},$ hence 

\begin{equation*}
 I_1  \sim_0   C\displaystyle\int_0^{R_0} R_0^ar^{-(a+1)p+d-1} \ dr,
\end{equation*}
which  is finite when $(a+1)p-d+1<1$, that is $p \in \left[1,  {d}/({a+1})\right[$, under the condition $a<d-1$. Hence, property $i)$ is proven by similar calculation in the case $R_0>\nu$.  
 Property $ii)$  may be achieved in a similar way.
\end{proof}
 
 \begin{remark}
      Proposition \ref{Prop:integrability_pq} trivially implies local integrability in Proposition \ref{Prop:K_L1_KN_bounded}
  \end{remark}
\begin{remark}
    It is not difficult to see that, using the classical Lennard-Jones potential, we obtain $13p-d+1<1$ which implies $\displaystyle p<{d}/{13}$. In order to have $p\ge 1$, we must require $d\ge14$.  Therefore it is not of interest from a physical point of view.  
\end{remark} 

\subsection{Boundedness of  convolution operators}
Let us consider the following notation
\begin{equation}    \label{eq:C_i_nu}
    C_{i,\nu} = \max \left\{
(a+i)R_0^a + (b+i)R_0^b \nu^{\,a-b},\;
(a+i)R_0^a \nu^{-(a-b)} + (b+i)R_0^b
\right\}, \qquad i=1,2.
\end{equation}
\subsubsection{Convolution operator defined by the Lennard-Jones force \texorpdfstring{$K$}{K} }

Let us consider again the case $d\ge 2$, with $a<d-1$.

\begin{proposition}[LJ force convolution operator]\label{prop:LJ_con_op}  
Let $d \ge 2$ fixed, and $q \in  \left]\frac{d}{b+1} ,+\infty\right[$. Let $K$ be the Lennard-Jones force  \eqref{eq:JL_potential_force} with $a>b>0$.   
\begin{itemize}
    \item[i)]   If  $r >1$, is such one of the two following conditions hold
    $$(a) \,\,\,\, \displaystyle\frac{1}{r}=1+\frac{1}{q}  - \frac{1}{p}, \quad \text{with } p \in \left[1,\displaystyle\frac{d}{a+1}\right[ , \quad\qquad (b)\,\,\,\, \displaystyle\frac{1}{r}=\frac{1}{q} +1 - \frac{a+1}{d} ,$$
     then,
        $K$ defines a convolution operator, bounded componentwise from $L^1\cap L^{r}(\R^d)$ to $L^q(\R^d)$, 
        i.e. there exists $0<C=C(\epsilon,\nu,a,b,d,p,q)$ such that
$$
\|K*f\|_{L^q(\R^d)}\le C\big(\|f\|_{L^1(\R^d)}+\|f\|_{L^r(\R^d)}\big)= C \|f\|_{{L^1}\cap {L^r}(\R^d)},
$$
where in case $(a)$, $C= C_{1,\nu}C_{K,p,q}$ with $C_{1,\nu}$ as in \eqref{eq:C_i_nu}  and and\begin{equation}\label{eq:def_C_Kpq}
C_{K, p,q}:=\max\left(\|K\|_{L^p(B(0,\nu))}, \|K\|_{L^q(B^c(0,\nu))}\right).
\end{equation}
    \item[ii)]Let $p \in \left[1,\frac{d}{a+1}\right[$ and  $p^\prime$ its conjugated exponent, then   $K$ defines a convolution operator, bounded componentwise from $L^1\cap L^{p^\prime}(\R^d)$ to $L^\infty(\R^d)$, i.e.
    \begin{equation}\label{eq:Ckd_K*f}
\|K*f\|_{L^\infty(\R^d)}\le C_{1,\nu}C_{K,p,q} \|f\|_{{L^1}\cap {L^{p^\prime}}(\R^d)}.
\end{equation}
    \end{itemize}
\end{proposition}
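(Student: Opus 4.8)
The plan is to split the kernel at the reference radius $\nu$, writing $K=K_{\mathrm{in}}+K_{\mathrm{out}}$ with $K_{\mathrm{in}}:=K\,\mathbf 1_{B(0,\nu)}$ and $K_{\mathrm{out}}:=K\,\mathbf 1_{B^c(0,\nu)}$, and to treat the two pieces by different convolution inequalities, using componentwise $|K\ast f|\le K_{\mathrm{in}}\ast|f|+K_{\mathrm{out}}\ast|f|$. Two preliminary facts will feed the argument. First, Proposition~\ref{Prop:integrability_pq} gives $K_{\mathrm{in}}\in L^{p}(\R^d)$ for $p\in[1,d/(a+1))$ and $K_{\mathrm{out}}\in L^{q}(\R^d)$ for $q\in(d/(b+1),+\infty]$, with $\|K_{\mathrm{in}}\|_{L^p(\R^d)}=\|K\|_{L^p(B(0,\nu))}$ and $\|K_{\mathrm{out}}\|_{L^q(\R^d)}=\|K\|_{L^q(B^c(0,\nu))}$, both bounded by $C_{K,p,q}$ of \eqref{eq:def_C_Kpq}. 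Second, estimating the two radial summands $R_0^a|x|^{-(a+1)}$ and $R_0^b|x|^{-(b+1)}$ of \eqref{eq:JL_potential_force} according to whether $|x|$ is below or above $\nu$ (and using $a>b$) will yield the pointwise comparison $|K(x)|\le C_{1,\nu}\big(|x|^{-(a+1)}\mathbf 1_{B(0,\nu)}(x)+|x|^{-(b+1)}\mathbf 1_{B^c(0,\nu)}(x)\big)$ with $C_{1,\nu}$ as in \eqref{eq:C_i_nu} (up to the fixed factor $\epsilon$).

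For the exterior part, in every case Young's inequality gives $\|K_{\mathrm{out}}\ast f\|_{L^q(\R^d)}\le\|K_{\mathrm{out}}\|_{L^q(\R^d)}\,\|f\|_{L^1(\R^d)}$; this is where the $L^1$ norm of $f$ enters, and it is forced because $K_{\mathrm{out}}\notin L^1(\R^d)$ whenever $b<d-1$, which always holds here. For the interior part under hypothesis~$(a)$, I would apply Young's inequality in the form $\|K_{\mathrm{in}}\ast f\|_{L^q(\R^d)}\le\|K_{\mathrm{in}}\|_{L^p(\R^d)}\,\|f\|_{L^r(\R^d)}$, valid exactly when $\tfrac1p+\tfrac1r=1+\tfrac1q$, i.e.\ $\tfrac1r=1+\tfrac1q-\tfrac1p$; since $d-1>a>b>0$ implies $p<\tfrac{d}{a+1}<\tfrac{d}{b+1}<q$, one checks $r\in(1,\infty)$ with all three exponents $\ge1$, so Young applies. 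Summing the two contributions gives $\|K\ast f\|_{L^q}\le C_{K,p,q}\big(\|f\|_{L^1}+\|f\|_{L^r}\big)$, and $C_{1,\nu}C_{K,p,q}$ is a (generous) admissible constant.

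Under hypothesis~$(b)$ the exponent $p=d/(a+1)$ is precisely the forbidden endpoint of case~$(a)$, so Young's inequality is no longer available and must be replaced by a weak-type estimate. The idea is to invoke the Hardy--Littlewood--Sobolev inequality: with $\beta:=d-(a+1)\in(0,d)$ (which uses $a<d-1$), the pointwise bound above gives $K_{\mathrm{in}}\ast|f|\le C_{1,\nu}\,I_{\beta}(|f|)$, where $I_{\beta}g(x):=\int_{\R^d}|x-y|^{-(d-\beta)}g(y)\,dy$ is the Riesz potential of order $\beta$, whence $\|K_{\mathrm{in}}\ast f\|_{L^q}\le C\,\|f\|_{L^r}$ with $\tfrac1q=\tfrac1r-\tfrac{\beta}{d}$, that is $\tfrac1r=\tfrac1q+1-\tfrac{a+1}{d}$; the constraints on $(a,b)$ again guarantee $1<r<q<\infty$, so HLS applies, and combining with the exterior bound closes case~$(b)$. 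For part~$(ii)$, which is the $q=\infty$ endpoint, Hölder gives $\|K_{\mathrm{in}}\ast f\|_{L^\infty}\le\|K_{\mathrm{in}}\|_{L^p}\,\|f\|_{L^{p'}}$ and $\|K_{\mathrm{out}}\ast f\|_{L^\infty}\le\|K_{\mathrm{out}}\|_{L^q}\,\|f\|_{L^{q'}}$; since $q>p$ forces $q'<p'$, the interpolation embedding $L^1\cap L^{p'}\hookrightarrow L^{q'}$ yields $\|f\|_{L^{q'}}\le\|f\|_{L^1}+\|f\|_{L^{p'}}$ (alternatively, take $q=\infty$ and pair $K_{\mathrm{out}}$ directly with $\|f\|_{L^1}$, using $K\in L^\infty(B^c(0,\nu))$), which gives \eqref{eq:Ckd_K*f}.

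The main obstacle will be case~$(b)$: one has to recognize that $\tfrac1r=\tfrac1q+1-\tfrac{a+1}{d}$ is exactly the borderline Sobolev exponent excluded from Young's inequality, and to replace the elementary argument by Hardy--Littlewood--Sobolev --- equivalently, a weak Young inequality in Lorentz spaces, exploiting that $|x|^{-(a+1)}$ lies in the weak Lebesgue space $L^{d/(a+1),\infty}(\R^d)$ but not in $L^{d/(a+1)}(\R^d)$. The remaining work is bookkeeping: verifying, from $d-1>a>b>0$, that every exponent produced by the Young and HLS relations lands in its admissible range ($r>1$ throughout, and $r<q$ in the HLS step), and propagating the explicit constants $C_{1,\nu}$ and $C_{K,p,q}$ through the estimates.
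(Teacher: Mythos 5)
Your proposal is correct and follows essentially the same route as the paper: the same splitting of $K$ at radius $\nu$ into the singular and tail pieces bounded by $C_{1,\nu}\left(|x|^{-(a+1)}\mathbbm{1}_{B(0,\nu)}+|x|^{-(b+1)}\mathbbm{1}_{B^c(0,\nu)}\right)$, Young's inequality pairing the tail with $\|f\|_{L^1}$ and the singular part with $\|f\|_{L^r}$ in case $(a)$, the Hardy--Littlewood--Sobolev inequality for the Riesz-type endpoint in case $(b)$, and H\"older plus the embedding $L^1\cap L^{p'}\hookrightarrow L^{q'}$ for part $(ii)$. The only additions are your explicit exponent bookkeeping and the Lorentz-space remark, which the paper leaves implicit.
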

\begin{proof} 
Let us fix $\nu\in\mathbb R_+$, from the decomposition we have    
\begin{equation}\label{eq:stima_K}
    | K(x)| \le \  C_{1,\nu}\left(\frac{\epsilon}{|x|^{a+1}} \, \mathbbm{1}_{B(0, \nu)}(x) + \frac{\epsilon}{|x|^{b+1}} \, \mathbbm{1}_{B^c(0, \nu)}(x)\right)=: C_{1,\nu}\left(K_1(x)+K_2(x)\right).
  \end{equation}
   Let  $Tf$ denote the  convolution operator  $Tf(x):= K \ast f(x) = \int_{\mathbb{R}^d} K(x-y) f(y) \, dy,$
    we get
    $$
    |Tf(x)|  \le {\int_{|x-y|\le \nu} \frac{\varepsilon\, C_{1,\nu}}{|x-y|^{a+1}} |f(y)| \, dy}
+  {\int_{|x-y|>\nu} \frac{\epsilon\, C_{1,\nu}}{|x-y|^{b+1}} |f(y)| \, dy} =:  C_{1,\nu}\left( I_1(x)+ I_2(x)\right).
$$
As far as the integral $I_2$, since from   Proposition \ref{Prop:integrability_pq}, and  the hypothesis upon $q$,     $K_2 \in   L^q(\mathbb R^d)$, we may apply Young's convolution inequality and  obtain
$$ 
\|I_2\|_{L^q(\R^d)}=\|K_2*f\|_{L^q(\R^d)}
\le \|K_2\|_{L^q(\R^d)}\,\|f\|_{L^1(\R^d)}.
$$
Let $r> 1$.  First, let us consider that case (\emph{a}) holds.
Then, first again by Proposition \ref{Prop:integrability_pq} we have
$K_1 \in   L^p(\mathbb R^d)$, so that one can again apply the   Young's inequality for convolution and get
$$ 
\|I_1\|_{L^q(\R^d)}=\|K_1 * f\|_{L^q(\R^d)}
\le \|K_1\|_{L^p(\R^d)} \|f\|_{L^r(\R^d)}.
$$
Now,   let us consider the case (\emph{b}). $K_1$ behaves as a Riesz potential of order $d-(a+1)$ and by Hardy–Littlewood–Sobolev  convolution inequality \cite[Theorem 25.2]{Samko_1993}  there exists  $C_{HLS}$ depending only upon $d,r,q,\alpha=d-(a+1), $ such that
$$\|I_1\|_{L^q(\R^d)}=\|K_1 * f\|_{L^q(\R^d)}
\le C_{HLS} \|f\|_{L^r(\R^d)}.
$$
Then, the thesis i) is achieved. 

Property $ii)$ follows easily from H\"older's inequality, and by the embedding properties of $L^p(\R^d)$ spaces, since $p^\prime>q^\prime$; indeed, 
\begin{align*}
    |Tf(x)|  & \le C_{1,\nu}\left(\|K\|_{L^p(B(0,\nu))}\|f\|_{L^{p^\prime}(\R^d)}+ \|K\|_{L^q(B^c(0,\nu))}\|f\|_{L^{q^\prime}(\R^d)}\right) \\  & \le C_{1,\nu}C_{K,p,q}\|f\|_{L^1 \cap L^{p^\prime} (\R^d)}. 
\end{align*}
\end{proof}

\begin{remark}
Condition (b) is the  limit version of Condition (a). This happens since
$p = {d}/{(a+1)}$ 
is the critical integrability exponent for the singular behaviour of $K$ at the origin: for $p < {d}/{(a+1)}$ 
the singular kernel near \(0\) belongs to \(L^p(\R^d)\), while for $p \ge {d}/{(a+1)}$  
it does not; hence,   in the limiting case one needs Hardy--Littlewood--Sobolev (HLS) type arguments or weak spaces.
\end{remark}

\subsubsection{Convolution operators defined by \texorpdfstring{$\nabla K$}{nabla K}. The sub-singular case \texorpdfstring{$d \geq 3$}{d ge 3}, \texorpdfstring{$a\leq d-2$}{a le d-2}.}
Whenever we restrict the study for a space dimension $d\ge 3$, and impose a larger restriction on the repulsive parameters $a$, by considering $a\le d-2$,  we can  deduce further regularity on the gradient of the force. In particular, we focus our attention on the integrability properties of the convolution operator defined by $\nabla K.$

 We first consider the case $a< d-2$.

\begin{proposition} \label{Prop:Grad_K1}
    Let  $d\ge 3$ fixed. Consider  $K$ the Lennard-Jones force  \eqref{eq:JL_potential_force} with  $ a>b>0$ and   $a<d-2.$
     \begin{enumerate}[label=\roman*)]
      \item If $    \bar{p}\in  \left[1,\frac{d}{a+2}\right[$, then
$\nabla K\in L^{\bar{p}} (B(0,\nu))$; if   $\bar{q}\in \left] \frac{d}{b+2},+\infty\right]$, then
 $\nabla K\in L^{\bar{q}}(B^c(0,\nu))$. 
\item    Let $\bar{q}\in \left] \frac{d}{b+2},+\infty\right[$. If  $\bar{r} > 1$ is such one of the two following conditions hold
    $$(\bar{a}) \,\,\,\, \displaystyle\frac{1}{\bar{r}}=1+\frac{1}{\bar{q}}  - \frac{1}{\bar{p}}, \quad \text{with } \bar{p} \in \left[1,\displaystyle\frac{d}{a+2}\right[ , \quad\qquad (\bar{b})\,\,\,\, \displaystyle\frac{1}{\bar{r}}=\frac{1}{\bar{q}} +1 - \frac{a+2}{d} $$
  then
 $\nabla K$ defines a convolution operator, bounded componentwise from $L^1\cap L^{\bar{r}}(\R^d)$ to $L^{\bar{q}}(\R^d)$, i.e. there exists $0<C=C(\varepsilon,\nu,a,b,d,\bar{p},\bar{q})$  such that
$$
\|\nabla K*f\|_{L^{\bar{q}}(\R^d)}\le C\left(\|f\|_{L^1(\R^d)}+\|f\|_{L^{\bar{r}}(\R^d)}\right)= C\|f\|_{{L^1}\cap L^{\bar{r}}(\R^d)} , 
$$
where in case $(\bar{a})$, $C=C_{2,\nu}C_{\nabla K,\bar{p},\bar{q}}$, where $C_{2,\nu}$ as in \eqref{eq:C_i_nu} and $C_{\nabla K,\bar{p},\bar{q}}$ is   as in \eqref{eq:def_C_Kpq}.
\item    Let $\left(\bar{p},\bar{q}\right) \in \left[1, \frac{d}{a+2}\right[ \times \left]\frac{d}{b+2},+\infty\right[$. Then    $\nabla K$ defines a convolution operator, bounded componentwise from $L^1\cap L^{\bar{p}\,^{\prime}}(\R^d)$ to $L^\infty(\R^d)$, where  $\bar{p}\,^\prime$ is the conjugated exponent of $\bar{p}$.
  \end{enumerate}     
\end{proposition}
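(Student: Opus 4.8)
The plan is to follow, almost verbatim, the pattern of Propositions~\ref{Prop:integrability_pq} and~\ref{prop:LJ_con_op}. Differentiating the kernel $K$ in \eqref{eq:JL_potential_force} raises by one the order of the singularity at the origin, from $a+1$ to $a+2$, and the order of decay at infinity, from $b+1$ to $b+2$; consequently all three items follow by carrying out the same arguments after replacing $a+1$ with $a+2$ and $b+1$ with $b+2$, provided the relevant ranges do not degenerate. The standing hypothesis $a<d-2$ is precisely what keeps $d/(a+2)>1$ and $d-(a+2)>0$, which is all that is needed.

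First I would record the explicit form of $\nabla K$ and a pointwise bound. Writing $K(x)=\epsilon\big(R_0^{a}|x|^{-(a+2)}-R_0^{b}|x|^{-(b+2)}\big)x$ and differentiating, one obtains componentwise, for $1\le j,k\le d$,
\[
\partial_k K_j(x)=\epsilon\Big[\big(R_0^{a}|x|^{-(a+2)}-R_0^{b}|x|^{-(b+2)}\big)\delta_{jk}-\big((a+2)R_0^{a}|x|^{-(a+4)}-(b+2)R_0^{b}|x|^{-(b+4)}\big)x_jx_k\Big].
\]
Bounding $|x_jx_k|\le|x|^{2}$ and using $R_0^{b}|x|^{-(b+2)}=R_0^{b}|x|^{a-b}|x|^{-(a+2)}\le R_0^{b}\nu^{a-b}|x|^{-(a+2)}$ on $B(0,\nu)$ (and the symmetric manipulation $R_0^{a}|x|^{-(a+2)}=R_0^{a}|x|^{-(a-b)}|x|^{-(b+2)}\le R_0^{a}\nu^{-(a-b)}|x|^{-(b+2)}$ on $B^{c}(0,\nu)$), this yields the analogue of \eqref{eq:stima_K},
\[
|\nabla K(x)|\le C_{2,\nu}\Big(\frac{\epsilon}{|x|^{a+2}}\,\mathbbm{1}_{B(0,\nu)}(x)+\frac{\epsilon}{|x|^{b+2}}\,\mathbbm{1}_{B^{c}(0,\nu)}(x)\Big)=:C_{2,\nu}\big(\widetilde K_1(x)+\widetilde K_2(x)\big),
\]
with $C_{2,\nu}$ as in \eqref{eq:C_i_nu}.

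Item i) then follows from the spherical-coordinate computation of Proposition~\ref{Prop:integrability_pq}: $\int_{B(0,\nu)}|\nabla K|^{\bar{p}}\,dx$ splits into $\int_0^{R_0}$ plus $\int_{R_0}^{\nu}$, the latter being the integral of a continuous function over a compact set, and the former comparable to $\int_0^{R_0}r^{-(a+2)\bar{p}+d-1}\,dr$, which is finite iff $(a+2)\bar{p}-d+1<1$, i.e. $\bar{p}<d/(a+2)$; the value $\bar{p}=1$ lies in this range exactly because $a<d-2$. The estimate on $B^{c}(0,\nu)$ is identical with $a+2$ replaced by $b+2$ and the roles of $0$ and $\infty$ interchanged, yielding $\bar{q}>d/(b+2)$. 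For item ii) it suffices, by the pointwise bound, to control $\widetilde K_1\ast f$ and $\widetilde K_2\ast f$ separately. Since $\widetilde K_2\in L^{\bar{q}}(\R^d)$ by item i), Young's convolution inequality gives $\|\widetilde K_2\ast f\|_{L^{\bar{q}}}\le\|\widetilde K_2\|_{L^{\bar{q}}}\|f\|_{L^1}$. In case $(\bar{a})$ one has $\widetilde K_1\in L^{\bar{p}}(\R^d)$ and Young's inequality with $1/\bar{r}=1+1/\bar{q}-1/\bar{p}$ gives $\|\widetilde K_1\ast f\|_{L^{\bar{q}}}\le\|\widetilde K_1\|_{L^{\bar{p}}}\|f\|_{L^{\bar{r}}}$. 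In case $(\bar{b})$, $\bar{p}=d/(a+2)$ is critical and $\widetilde K_1$ is, up to the constant $\epsilon$, the Riesz kernel of order $d-(a+2)\in(0,d)$; the Hardy–Littlewood–Sobolev inequality \cite[Theorem 25.2]{Samko_1993}, whose admissibility relation $1/\bar{q}=1/\bar{r}-\big(d-(a+2)\big)/d$ is exactly $(\bar{b})$, then gives $\|\widetilde K_1\ast f\|_{L^{\bar{q}}}\le C_{HLS}\|f\|_{L^{\bar{r}}}$. Adding the two contributions and absorbing $C_{2,\nu}$ yields the claim, with constant $C_{2,\nu}C_{\nabla K,\bar{p},\bar{q}}$ in case $(\bar{a})$, with $C_{\nabla K,\bar{p},\bar{q}}$ as in \eqref{eq:def_C_Kpq}. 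For item iii), Hölder's inequality in the same decomposition gives $|\nabla K\ast f(x)|\le C_{2,\nu}\big(\|\widetilde K_1\|_{L^{\bar{p}}(B(0,\nu))}\|f\|_{L^{\bar{p}'}}+\|\widetilde K_2\|_{L^{\bar{q}}(B^{c}(0,\nu))}\|f\|_{L^{\bar{q}'}}\big)$, both kernel norms being finite by item i); since $a>b$ forces $\bar{p}<d/(a+2)<d/(b+2)<\bar{q}$, hence $1\le\bar{q}'\le\bar{p}'$, interpolation bounds $\|f\|_{L^{\bar{q}'}}$ by $\|f\|_{L^1\cap L^{\bar{p}'}}$, and the conclusion follows as in Proposition~\ref{prop:LJ_con_op}\,ii).

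I do not expect a genuine obstacle: the argument is a bookkeeping variant of the two preceding propositions. The only point that truly uses $a<d-2$ rather than merely $a<d-1$ is the critical case $(\bar{b})$, where $d-(a+2)>0$ is needed both for $\widetilde K_1$ near the origin to be a bona fide sub-singular Riesz kernel and for Hardy–Littlewood–Sobolev to apply — the same inequality being what makes $\bar{p}=1$ admissible in item i). The one step where a short explicit computation is actually required is the verification of the pointwise bound on $\nabla K$ with the constant $C_{2,\nu}$ of \eqref{eq:C_i_nu}.
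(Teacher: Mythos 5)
Your proposal is correct and follows essentially the same route as the paper: the pointwise bound $|\nabla K(x)|\le C_{2,\nu}\big(\epsilon|x|^{-(a+2)}\mathbbm{1}_{B(0,\nu)}+\epsilon|x|^{-(b+2)}\mathbbm{1}_{B^c(0,\nu)}\big)$ followed by a rerun of the arguments of Propositions \ref{Prop:integrability_pq} and \ref{prop:LJ_con_op} (spherical coordinates for i), Young's inequality and Hardy--Littlewood--Sobolev for ii), H\"older plus embedding for iii)). The only cosmetic difference is that the paper obtains the constant $C_{2,\nu}$ from the radial Jacobian decomposition \eqref{eq:HessianMatrixK} rather than from the crude componentwise bound $|x_jx_k|\le|x|^2$, which slightly inflates the constant but does not affect the statement.
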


\begin{proof}
      By the fact that $K$ is a radial function, its Jacobian matrix is given by
    \begin{equation}\label{eq:HessianMatrixK}
    \nabla K=K^*\frac{I_d}{|x|}+\left(|x|\frac{dK^*}{d|x|}-K^*\right) \frac{x \otimes x}{|x|^3},
\end{equation}
where $K^*=\displaystyle \epsilon\left(\frac{R_0^a}{|x|^{a+1}}-\frac{R_0^b}{|x|^{b+1}} \right)$, $I_d$ is the $d$-dimensional identity matrix, and $\otimes:\mathbb{R}^d \times \mathbb{R}^d \to M^{d \times d}$ denotes the tensor product, i.e., $(x \otimes x)_{i,j}=x_ix_j$.

 From \eqref{eq:HessianMatrixK}, we can considerer the following estimate
\begin{equation}\label{eq:stima_gradK}
    |\nabla K(x)| \leq C_{2,\nu} \left(\frac{\epsilon}{|x|^{a+2}} \, \mathbbm{1}_{B(0,\nu)}(x) + \frac{\epsilon}{|x|^{b+2}} \, \mathbbm{1}_{B^c(0,\nu)}(x)\right)= C_{2,\nu}\left(K_1(x)+ K_2(x)\right).
  \end{equation}
Then,  with similar arguments as in Proposition \ref{Prop:integrability_pq}, the local integrability properties $i)$ are obtained. 

Properties $ii)$ and $iii)$ follow by the same arguments as those in Proposition \ref{prop:LJ_con_op}, using the estimate \eqref{eq:stima_gradK} instead of \eqref{eq:stima_K}.

\end{proof}

By the previous results we have identified the order of the integrability   of  convolution operator driven by $K$ and $\nabla K$. We may also say something more about continuity by identifying the H\"older spaces to which the operator belongs.

\begin{definition}[H\"older space]
    We denote by $C^\zeta(\mathbb{R}^d)$ the H\"older space of order $\zeta\in (0,1]$, consisting of all functions $g$ defined over $\mathbb{R}^d$, bounded a.e.,  such that the norm 
\begin{equation*}
    \|g\|_{0,\zeta}:= \| g\|_{L^\infty (\mathbb R^d)} +|g|_\zeta,
\end{equation*}
is finite, where 
\begin{equation*}
|g|_\zeta := \sup_{x\neq y \in \mathbb{R}^d} \frac{|g(x) - g(y)|}{|x-y|^\zeta},
\end{equation*}
is the H\"older seminorm.
\end{definition}

\begin{proposition}\label{Prop:Grad_K2-bis} 
Let $d \ge 3$. Consider the Lennard-Jones force $K$ given in \eqref{eq:JL_potential_force}, with parameters $d-2>a > b > 0$. Let   $q\in  \left]\frac{d}{b+1}, +\infty\right[$. 
\begin{itemize}
    \item[a.]  Let $p\in   \left[1, \frac{d}{a+2}\right[$ and $p^\prime=\frac{p}{p-1}$.   If  $r$ is such that  $\frac{1}{r}=1+\frac{1}{q}-\frac{1}{p}$ , then, for any $f \in L^1 \cap L^{r}(\mathbb{R}^d)$,  $ K \ast f, \nabla K \ast f \in L^q(\mathbb{R}^d).$ Furthermore,  for any $f \in L^1 \cap L^{p^\prime}(\mathbb{R}^d)$,  then $
K \ast f, \nabla K \ast f \in L^\infty(\mathbb{R}^d).
$  
  \item[b.]Let  $\bar{r}=\frac{qd}{d-(a+2)+q}$. For any $f \in L^1 \cap L^{\bar{r}}(\mathbb{R}^d)$,  $ K \ast f, \nabla K \ast f \in L^q(\mathbb{R}^d).$
  
\end{itemize}  

 \end{proposition}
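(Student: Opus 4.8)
\emph{Proof strategy.} The plan is to read this statement as a corollary of Propositions \ref{prop:LJ_con_op} and \ref{Prop:Grad_K1}: the hypothesis $d-2>a>b>0$ is precisely what makes the convolution bounds for $K$ (which need only $a<d-1$) and those for $\nabla K$ (which need $a<d-2$) simultaneously available, and the only real work is to align the free exponents $p,q,r$ appearing in those two propositions so that a \emph{single} integrability assumption on $f$ feeds both operators at once. I would not reprove anything about the kernels themselves; it is bookkeeping on admissible ranges.

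\emph{Part a.} First I would fix $p\in[1,d/(a+2))$, $q\in(d/(b+1),+\infty)$ and let $r$ be given by $1/r=1+1/q-1/p$; note that $p<d/(a+1)<d/(b+1)<q$, so automatically $r>1$. Since $[1,d/(a+2))\subset[1,d/(a+1))$, condition $(a)$ in Proposition \ref{prop:LJ_con_op} i) applies verbatim and yields $K\ast f\in L^q(\R^d)$, with the quantitative bound, for every $f\in L^1\cap L^r(\R^d)$. For $\nabla K$ I would then set $\bar q:=q$ (admissible because $q>d/(b+1)>d/(b+2)$) and $\bar p:=p\in[1,d/(a+2))$ in Proposition \ref{Prop:Grad_K1} ii): condition $(\bar a)$ becomes $1/\bar r=1+1/\bar q-1/\bar p=1/r$, so $\bar r=r$ and $\nabla K\ast f\in L^q(\R^d)$ for the same class of $f$. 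For the $L^\infty$ assertion I would invoke Proposition \ref{prop:LJ_con_op} ii) for $K$ (valid since $p\in[1,d/(a+1))$) and Proposition \ref{Prop:Grad_K1} iii) for $\nabla K$ with $(\bar p,\bar q)=(p,q)$; both give boundedness from $L^1\cap L^{p'}(\R^d)$ into $L^\infty(\R^d)$.

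\emph{Part b.} This is the limiting, Hardy--Littlewood--Sobolev case. Here I would take $\bar r$ to be the critical exponent $1/\bar r=1/q+1-(a+2)/d$; with $\bar q:=q$ this is exactly condition $(\bar b)$ of Proposition \ref{Prop:Grad_K1} ii), so the HLS inequality used there gives $\nabla K\ast f\in L^q(\R^d)$ for $f\in L^1\cap L^{\bar r}(\R^d)$. The point I would stress is that this same $\bar r$ is admissible for $K$ through condition $(a)$ — not $(b)$ — of Proposition \ref{prop:LJ_con_op} i), by choosing $p=d/(a+2)$, which lies in $[1,d/(a+1))$ because $b<a<d-2$ forces $1<d/(a+2)<d/(a+1)$: indeed $1+1/q-1/p=1/q+1-(a+2)/d=1/\bar r$, so plain Young's inequality already gives $K\ast f\in L^q(\R^d)$ for that $\bar r$. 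Combining the two conclusions finishes the proof.

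\emph{Where the (minor) difficulty sits.} There is no genuine analytic obstacle beyond Propositions \ref{prop:LJ_con_op} and \ref{Prop:Grad_K1}; the one thing to watch is the exponent matching, and in particular the observation used in Part b that the critical HLS exponent attached to the $|x|^{-(a+2)}$ singularity of $\nabla K$ is reached for $K$ by ordinary Young's inequality precisely because the singularity $|x|^{-(a+1)}$ of $K$ is milder. Should the two operators at some intermediate step produce slightly different thresholds, I would reconcile them using the monotonicity $L^1\cap L^s\hookrightarrow L^1\cap L^t$ for $1\le t\le s$ (interpolation against $L^1$), and I would keep careful track of the half-open endpoints of $[1,d/(a+1))$ and $[1,d/(a+2))$.
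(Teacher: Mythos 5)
Your proposal is correct, and part a. is exactly the paper's argument: you apply condition (a) of Proposition \ref{prop:LJ_con_op} to $K$ (legitimate since $\left[1,\tfrac{d}{a+2}\right[\subset\left[1,\tfrac{d}{a+1}\right[$) and condition ($\bar a$) of Proposition \ref{Prop:Grad_K1} to $\nabla K$ with $(\bar p,\bar q)=(p,q)$, so the two conditions produce the same $r$, and you use parts ii) and iii) of those propositions for the $L^\infty$ claim — precisely what the paper does. In part b. you take a mildly different route: the paper treats $K$ via the HLS condition (b) of Proposition \ref{prop:LJ_con_op}, whose exponent $r$ (built from $a+1$) is smaller than $\bar r$, and then upgrades $f\in L^1\cap L^{\bar r}(\R^d)$ to $f\in L^1\cap L^{r}(\R^d)$ by interpolation against $L^1$; you instead feed $K$ through the Young condition (a) with the admissible choice $p=\tfrac{d}{a+2}$ (valid because $a<d-2$ gives $1<\tfrac{d}{a+2}<\tfrac{d}{a+1}$, and $\bar r>1$ since $q>\tfrac{d}{b+1}>\tfrac{d}{a+2}$), which reproduces $\tfrac1{\bar r}=\tfrac1q+1-\tfrac{a+2}{d}$ directly and so dispenses with both the HLS step for $K$ and the embedding argument. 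Both are immediate corollaries of Propositions \ref{prop:LJ_con_op} and \ref{Prop:Grad_K1}; yours is slightly more economical, the paper's avoids working at a specific value of $p$ inside the admissible range. Note finally that, like the paper's own proof, you read $\bar r$ as the exponent determined by hypothesis ($\bar b$), i.e. $\tfrac1{\bar r}=\tfrac1q+1-\tfrac{a+2}{d}$, which is the intended meaning of the displayed formula for $\bar r$ in the statement.
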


\begin{proof}
The properties a.  follow from the Propositions \ref{prop:LJ_con_op}  and \ref{Prop:Grad_K1}, since hypothesis (a) and ($\bar{a}$) are satisfied, respectively. The point b. is a direct consequence of Propositions \ref{prop:LJ_con_op}  and \ref{Prop:Grad_K1}; indeed, hypothesis  ($\bar{b}$) is  satisfied and, by observing that the exponent $r$ in ($b$) is smaller  than $\bar{r}$,  by embedding arguments we derive that if  $f\in L^1\cap L^{\bar{r}}(\R^d) $, then it also belongs to $  L^1\cap L^{r}(\R^d)$, we may apply also hypothesis (b). 
\end{proof}

Let us conclude this section by  considering limit case $a=d-2$,  that corresponds to a critical case in which not only  the local integrability of  $\nabla K$ is missing, so that we cannot apply the Young's inequality argument, but also the Hardy-Littlewood-Sobolev (HLS) inequality cannot be applied. However, integrability property of the convolution may be granted, even tough without smoothness effects, since it is a Calderón–Zygmund type kernel\cite[Chapter 5]{Grafakos_2014}.

\begin{proposition}\label{Prop:Grad_K2} 

Let  $d\ge 3$ fixed. Let $K$ the Lennard-Jones force  \eqref{eq:JL_potential_force} with  $ a>b>0$ and   $d-2=a$. Let $\bar{q}\in \left] \frac{d}{b+2},+\infty\right[$, then $\nabla K$ defines a convolution operator, bounded componentwise from $L^1\cap L^{\bar{q}}(\mathbb{R}^d)$ to $L^{\bar{q}}(\mathbb{R}^d)$.   
\end{proposition}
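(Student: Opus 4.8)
The plan is to isolate within $\nabla K$ the part responsible for the critical behaviour and treat it by Calderón–Zygmund theory, disposing of the remainder by the Young-type argument already used in Propositions~\ref{Prop:integrability_pq} and~\ref{Prop:Grad_K1}. First I would fix an arbitrary $\nu\in\mathbb R_+$ and, starting from the Jacobian formula \eqref{eq:HessianMatrixK} with $K^{*}=\epsilon\big(R_0^{a}|x|^{-(a+1)}-R_0^{b}|x|^{-(b+1)}\big)$, substitute $a=d-2$ and separate the $R_0^{a}$ and $R_0^{b}$ contributions. This gives, componentwise, a splitting $\nabla K=H_a+H_b$ with
\[
H_a(x)=\epsilon R_0^{a}\,|x|^{-d}\Big(I_d-d\,\tfrac{x\otimes x}{|x|^{2}}\Big),\qquad |H_b(x)|\le C\,|x|^{-(b+2)},
\]
where $C$ depends only on $\epsilon,R_0,b,d$ (equivalently, $H_a$ is up to a constant the Hessian of the Newtonian potential $|x|^{-(d-2)}$, and $H_b$ a multiple of $\mathrm{Hess}(|x|^{-b})$). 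Each scalar entry of $H_a$ has the form $\Omega_{ij}(x/|x|)\,|x|^{-d}$ with $\Omega_{ij}(\theta)=\epsilon R_0^{a}\big(\delta_{ij}-d\,\theta_i\theta_j\big)$ smooth on $S^{d-1}$, and from $\int_{S^{d-1}}\theta_i\theta_j\,d\sigma(\theta)=\tfrac{|S^{d-1}|}{d}\,\delta_{ij}$ one reads off the cancellation condition $\int_{S^{d-1}}\Omega_{ij}\,d\sigma=0$ for every pair $(i,j)$.

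Then I would conclude by combining two boundedness facts. For $H_a$: since each $\Omega_{ij}\in L^{\infty}(S^{d-1})$ has mean zero on the sphere, the Calderón–Zygmund theorem for homogeneous singular integrals of convolution type \cite[Chapter~5]{Grafakos_2014} gives that the principal-value operator $f\mapsto H_a*f$ is bounded on $L^{\bar q}(\mathbb R^{d})$ for every $1<\bar q<\infty$; this applies here because $b<a=d-2$ forces $b+2<d$, hence $\bar q>d/(b+2)>1$, while $\bar q<\infty$ by hypothesis. For $H_b$: the same inequality $b+2<d$ makes $H_b\mathbbm 1_{B(0,\nu)}$ dominated by $C|x|^{-(b+2)}\mathbbm 1_{B(0,\nu)}\in L^{1}(\mathbb R^{d})$, and $(b+2)\bar q>d$ makes $H_b\mathbbm 1_{B^{c}(0,\nu)}$ dominated by $C|x|^{-(b+2)}\mathbbm 1_{B^{c}(0,\nu)}\in L^{\bar q}(\mathbb R^{d})$; Young's convolution inequality then gives $\|(H_b\mathbbm 1_{B(0,\nu)})*f\|_{L^{\bar q}}\le C\|f\|_{L^{\bar q}}$ and $\|(H_b\mathbbm 1_{B^{c}(0,\nu)})*f\|_{L^{\bar q}}\le C\|f\|_{L^{1}}$. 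Summing the three contributions over all components yields $\|\nabla K*f\|_{L^{\bar q}(\mathbb R^{d})}\le C\big(\|f\|_{L^{1}(\mathbb R^{d})}+\|f\|_{L^{\bar q}(\mathbb R^{d})}\big)$, which is the assertion.

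The main obstacle is exactly the term $H_a$: at the endpoint $a=d-2$ its kernel is homogeneous of degree precisely $-d$, so it is neither locally integrable (ruling out the Young-based argument of the non-critical Propositions~\ref{prop:LJ_con_op} and~\ref{Prop:Grad_K1}) nor a Riesz potential of positive order (ruling out Hardy–Littlewood–Sobolev), and one is forced to verify the spherical cancellation condition and invoke singular-integral theory. A related subtlety I would flag is that $\nabla K*f$ must be interpreted as a principal value: as a tempered distribution $\mathrm{Hess}(|x|^{-(d-2)})$ equals the p.v.\ of its homogeneous part plus a Dirac term proportional to $\delta_{ij}\delta_0$, coming from $-\Delta|x|^{-(d-2)}=c_d\,\delta_0$, and convolution against that term merely adds a harmless constant multiple of $f\in L^{\bar q}$. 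Consistently with the remark preceding the statement, no Hölder smoothing is produced in this critical case.
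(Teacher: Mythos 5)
Your proof is correct and follows essentially the same route as the paper: Calder\'on--Zygmund theory (\cite[Chapter 5]{Grafakos_2014}) for the critically homogeneous $|x|^{-d}$ part of $\nabla K$ and a Young-type near/far splitting for the subcritical $|x|^{-(b+2)}$ contribution, using $b<a=d-2$ and $\bar q>d/(b+2)$ exactly as in the paper. Your execution is in fact the sharper formulation of the same argument: you verify the cancellation condition for the matrix-valued kernel via the mean-zero property of $\delta_{ij}-d\,\theta_i\theta_j$ on $S^{d-1}$ (rather than the paper's looser appeal to the odd symmetry of $x/|x|$), and you correctly flag the principal-value-plus-Dirac interpretation of $\nabla K\ast f$, which only adds a bounded multiple of $f$.
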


\begin{proof} From \eqref{eq:stima_gradK}, if $f\in L^1(\mathbb R^d)$ again for $\bar{q} >d/(b+2)$ we get the boundedness of $K_2$. As $K_1$ regards, in the case $a=d-2$ the local kernel behaves like $|K_1(x)| \sim |x|^{-(a+2)} =|x|^{-(d-\lambda)}$ with $\lambda=d-(a+2)=0$,
then the kernel has the critical form 
$|x|^{-d}$. No smoothness effect may be granted by HLS, which indeed need $\lambda >0 $. However $\nabla K$ is a singular operator of the  Calderón–Zygmund type \cite{Grafakos_2014}. 
A key point that makes the Calderón--Zygmund theory applicable is that 
\( K_1\) is a radial vector field with symmetry of the type  $
 K_1(x) = \psi(|x|)\,{x}/{|x|},
$ as shown  in \eqref{eq:HessianMatrixK}.
The integral over a sphere of ${x}/{|x|}$ vanishes by symmetry, so the \emph{cancellation condition} is automatically satisfied. This is the key point that compensate the non-integrability. Furthermore, from \eqref{eq:stima_gradK} the \emph{size condition} $| K_1| <C |x|^{-d}$ holds. Finally,  the \emph{Hölder continuity} of the kernel away from the singularity holds. Indeed, the angular factor $\psi$ is   $C^\infty$ on the unit sphere $S^{d-1}$ and since $|h|<|x|/2$, i.e. off the diagonal, for any $0<\delta\le 1$ it is easy to estimates   $$
| K_1(x-h)- K_1(x)|
\le C\,\frac{|h|^\delta}{|x|^{d+\delta}}.
$$
Hence, $ K_1$ is the Calderón--Zygmund kernel and by    \cite[Theorem 5.4.5]{Grafakos_2014}, it defines a bounded operator in any $L^{\bar{q}}(\mathbb{R}^d)$ to $L^{\bar{q}}(\mathbb{R}^d)$, with $\bar{q}\in (1,+\infty)$. Then, we may conclude that if $f\in L^1\cap L^{\bar{q}}(\R^d)$, the thesis is achieved. 

\end{proof}

\begin{remark}\label{Rem:Grad_K2} 
    Following the same reasoning as in Proposition \ref{Prop:Grad_K2-bis} a., letting $q \in \left]\frac{d}{b+1},+\infty\right[$ and $p=1$, then for any $f\in L^1\cap L^{q}(\R^d)$, $K\ast f$, $\nabla K \ast f \in L^q(\R^d)$.
\end{remark}

 \begin{proposition}\label{Prop:HoderSpaceSubCri} 
Let $d \ge 3$. Consider the Lennard-Jones force $K$ given in \eqref{eq:JL_potential_force}, with parameters $a > b > 0$. Let   $q\in  \left]\frac{d}{b+1}, +\infty\right[$. 
\begin{itemize}
    \item[a.] Let $d-2>a$. Let $r>1$ such that it satisfies    points a. or b. of Proposition   \ref{Prop:Grad_K2-bis}. If moreover $q >d$, for any $f \in L^1 \cap L^{r}(\mathbb{R}^d)$ then $K \ast f \in C^{\zeta}(\mathbb R^d)$, with  $\zeta = 1 - d/q$, i.e. the following estimate holds
\begin{equation}\label{Est:Convolution_function_inHolder}
    \left|K \ast f \right|_{\zeta} \leq C\,\|f\|_{L^1 \cap L^{r}(\mathbb{R}^d)}.
\end{equation}
Furthermore, let $r\ge p'$ and $r>d$. For any $f\in L^1 \cap L^r(\R^d)$, then $$|K \ast f|_1\le C\|f\|_{L^1 \cap L^r}(\R^d).$$
\item[b.] Let $d-2=a$. If moreover $q >d$, for any $f \in L^1 \cap L^{q}(\mathbb{R}^d)$ then $K \ast f \in C^{\zeta}(\mathbb R^d)$, with  $\zeta = 1 - d/q$, i.e. the following estimate holds
\begin{equation}\label{Est:Convolution_function_inHolderSingular}
    \left|K \ast f \right|_{\zeta} \leq C\,\|f\|_{L^1 \cap L^{q}(\mathbb{R}^d)}.
    \end{equation}
\end{itemize}  
\end{proposition}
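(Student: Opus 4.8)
The plan is to reduce the whole statement to Morrey's embedding $W^{1,q}(\mathbb R^d)\hookrightarrow C^{0,1-d/q}(\mathbb R^d)$, valid precisely for $q>d$, applied to $g:=K\ast f$. I will use the sharp form: if $g\in W^{1,q}(\mathbb R^d)$ with $q>d$, then $g$ has a (unique) continuous representative which is bounded and satisfies $|g(x)-g(y)|\le C\,|x-y|^{1-d/q}\,\|\nabla g\|_{L^q(\mathbb R^d)}$ for all $x,y$, hence $|g|_\zeta\le C\|\nabla g\|_{L^q(\mathbb R^d)}$ with $\zeta=1-d/q$. Thus it suffices to show that $K\ast f\in L^q(\mathbb R^d)$ and that its distributional gradient lies in $L^q(\mathbb R^d)$, with both norms controlled by $\|f\|_{L^1\cap L^r}$ in part a.\ (resp.\ $\|f\|_{L^1\cap L^q}$ in part b.). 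The $L^q$-membership of $K\ast f$ itself is already supplied: in part a.\ it is Proposition \ref{Prop:Grad_K2-bis} a.\ or b.\ (depending on whether $r$ comes from the Young or the HLS balance), and in part b.\ it is Remark \ref{Rem:Grad_K2} (the case $p=1$).

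For the gradient I distinguish the two regimes. In the subcritical regime $d-2>a$ of part a., Proposition \ref{Prop:Grad_K1}(i) gives $\nabla K\in L^1_{\mathrm{loc}}(\mathbb R^d)$, so the distributional gradient of the convolution is genuinely $\nabla(K\ast f)=(\nabla K)\ast f$; since the exponent $r$ inherited from Proposition \ref{Prop:Grad_K2-bis} a.\ (resp.\ b.) is exactly the one satisfying condition $(\bar a)$ (resp.\ $(\bar b)$) of Proposition \ref{Prop:Grad_K1}, part ii) of that proposition yields $(\nabla K)\ast f\in L^q(\mathbb R^d)$ with $\|(\nabla K)\ast f\|_{L^q}\le C\|f\|_{L^1\cap L^r}$. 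Combining with the $L^q$-bound on $K\ast f$ gives $K\ast f\in W^{1,q}(\mathbb R^d)$, and Morrey's inequality yields \eqref{Est:Convolution_function_inHolder}. In the critical regime $d-2=a$ of part b., $\nabla K\notin L^1_{\mathrm{loc}}$, but by Proposition \ref{Prop:Grad_K2} the kernel $\nabla K$ is of Calderón--Zygmund type; accordingly the distributional gradient of $K\ast f$ has the form $(\mathrm{p.v.}\,\nabla K)\ast f + c\,f$, the constant $c$ accounting for the Dirac component produced when differentiating the Newtonian-type singularity $|x|^{-(d-2)}$ of $\Phi$. The principal-value term lies in $L^q(\mathbb R^d)$ by the $L^q$-boundedness of the Calderón--Zygmund operator, and $c\,f\in L^q$ trivially for $f\in L^1\cap L^q$, with total norm $\le C\|f\|_{L^1\cap L^q}$; again $K\ast f\in W^{1,q}(\mathbb R^d)$ and Morrey gives \eqref{Est:Convolution_function_inHolderSingular}.

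It remains to treat the Lipschitz bound $|K\ast f|_1\le C\|f\|_{L^1\cap L^r}$ at the end of part a. The point is that $r\ge p'$ entails $L^1\cap L^r(\mathbb R^d)\subset L^1\cap L^{p'}(\mathbb R^d)$ by the embedding of $L^p$-spaces on finite-measure localizations together with the $L^1$-control, so Proposition \ref{Prop:Grad_K1}(iii) applies with $\bar p=p$ and gives $\nabla(K\ast f)=(\nabla K)\ast f\in L^\infty(\mathbb R^d)$ with $\|\nabla(K\ast f)\|_{L^\infty}\le C\|f\|_{L^1\cap L^{p'}}\le C\|f\|_{L^1\cap L^r}$. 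Since a function in $L^1_{\mathrm{loc}}$ with distributional gradient in $L^\infty(\mathbb R^d)$ coincides a.e.\ with a Lipschitz function of Lipschitz constant at most $\|\nabla(K\ast f)\|_{L^\infty}$ (the extra hypothesis $r>d$ ensuring via the first part that $K\ast f$ indeed has a continuous representative on which the seminorm is taken), the estimate $|K\ast f|_1\le\|\nabla(K\ast f)\|_{L^\infty}\le C\|f\|_{L^1\cap L^r}$ follows.

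The step I expect to be the main obstacle is the critical case $a=d-2$: there one cannot write $\nabla(K\ast f)=(\nabla K)\ast f$ naively, and one must argue entirely through Calderón--Zygmund theory — isolating the principal-value-plus-Dirac decomposition of $\nabla K$ and checking that convolution against $f\in L^1\cap L^q$ still produces an element of $L^q(\mathbb R^d)$, so that $K\ast f$ genuinely belongs to $W^{1,q}(\mathbb R^d)$ and Morrey is applicable. Everything else is a bookkeeping assembly of the convolution estimates already established in Propositions \ref{Prop:Grad_K1}, \ref{Prop:Grad_K2-bis}, \ref{Prop:Grad_K2} together with the classical Morrey embedding.
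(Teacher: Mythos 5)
Your proof is correct and follows essentially the same route as the paper: establish $K\ast f\in W^{1,q}(\R^d)$ via the convolution bounds of Propositions \ref{Prop:Grad_K1} and \ref{Prop:Grad_K2-bis} (resp.\ the Calder\'on--Zygmund result behind Proposition \ref{Prop:Grad_K2} and Remark \ref{Rem:Grad_K2} when $a=d-2$) and conclude with Morrey's inequality, the Lipschitz bound coming from $\nabla K\ast f\in L^\infty(\R^d)$ under $r\ge p'$. Your explicit identification of the distributional gradient --- in particular the principal-value-plus-Dirac decomposition in the critical case $a=d-2$, whose delta part contributes only $c\,f\in L^q(\R^d)$ --- is a point the paper leaves implicit, and it only sharpens, without altering, the paper's argument.
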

\begin{proof}
   For the case $d-2>a$, from either point a. or b. of Proposition \ref{Prop:Grad_K2-bis}, $K*f\in W^{1,q}(\R^d)$, with $q>d$. Hence we may apply the Morrey's inequality and get the thesis. For a more detailed  proof point of a., we refer to \cite[Lemma 5.1]{2023_oliveira_richard_tomasevic}.
   The case $d-2=a$, follows from Remark \ref{Rem:Grad_K2}, together with the same arguments as above. 

\end{proof}

\subsubsection{Boundedness of  convolution operators: the super-singular case \texorpdfstring{$a \in (d-2, d-1)$}{a in (d-2, d-1)}}\label{sec:supersingularcase}

To treat the super-singular case, we need to change the functional space in order to better control the stronger singularity. Let us recall the Bessel potential space and its main properties. The interest reader may refer to    \cite{Triebel_1983} for a more detailed discussion.

\begin{definition}[Schwartz functions]
    A $C^\infty$ complex-valued function $f$ on $\mathbb{R}^d$ is called a \emph{Schwartz function} if for every pair of multi-indices $\alpha$ and $\beta$ there exists a positive constant $C_{\alpha,\beta}$ such that the \emph{Schwartz seminorm}  of $f$ is finite
\begin{equation*}
    \sup_{x \in R^d}\left\|x^\alpha D^\beta f(x)\right\|=C_{\alpha,\beta}<+\infty.
\end{equation*}  
Hence, a   Schwartz function is a smooth function that decay faster than any polynomial at infinity, together with all their derivatives. 
The set of all Schwartz functions on $\mathbb{R}^d$ is denoted by $\mathcal{S}(\mathbb{R}^d)$.   $\mathcal{S}'(\mathbb{R}^d)$ denote its dual space, that is the space of \emph{tempered distributions}. 
\end{definition}

Tempered distributions act continuously on rapidly decreasing test functions, require at most polynomial growth at infinity and are stable under Fourier transform \cite{Triebel_1983}.

\begin{definition}[Bessel Potential Space]
Let  $p \in [1,+\infty]$ and $\beta \in \R$. The \emph{Bessel potential space}   $H^{\beta,p}(\mathbb{R}^d)$ is the following  subspace of tempered distributions  
\begin{center}
$H^{\beta,p}(\R^{d}):= \left\{ u \in \mathcal{S}'(\mathbb{R}^d); \;  \mathcal{F}^{-1}\left( \left(1+|\cdot|^{2}\right)^{\frac{\beta}{2}}\; \mathcal{F} u(\cdot) \right) \in  L^p(\mathbb{R}^d)\right\}$,
\end{center}
where $\mathcal Fu$ denotes the Fourier transform of $u$. The space is endowed with the norm
 \begin{equation*}
 \left\| u \right\|_{H^{\beta,p}(\R^d)} = \left\| \mathcal{F}^{-1}\left( \left(1+|\cdot|^{2}\right)^{\frac{\beta}{2}
}\; \mathcal{F} u(\cdot) \right) \right\|_{L^p(\mathbb{R}^d)} .
 \end{equation*}
  In particular,
 \begin{equation*}
  \left\Vert u\right\Vert _{H^{0,p}(\R^d)}=\left\Vert u\right\Vert_{L^p(\mathbb{R}^d)}  \text{ and for any } \beta \le \gamma \ \left\Vert u\right\Vert _{H^{\beta,p}(\R^d)} \le \left\Vert u\right\Vert_{H^{\gamma,p}(\R^d)}.
 \end{equation*}
The space $H^{\beta,p}(\mathbb{R}^d)$ is 
associated to the fractional operator $(I-\Delta)^\frac{\beta}{2}$ acting on $f\in \mathcal{S}'(\mathbb{R}^d)$  as
\begin{equation} \label{eq:fractional_heat_operator}
(\mathrm{I}-\Delta)^\frac{\beta}{2} f := \mathcal{F}^{-1}\left((1+|\cdot|^2)^{\frac{\beta}{2}} \mathcal{F}f \right) .
\end{equation}
\end{definition}

\begin{proposition}\label{Prop:Grad_K3}
    Let $d \ge 2$ fixed. Consider $K$ the Lennard-Jones force \eqref{eq:JL_potential_force} with $a>b>0$, $a \in (d-2,d-1)$,  $r\in [1,+\infty]$, $\beta \in \R$ such that $\beta-\displaystyle\frac{d}{r}\in \left(2-d+a,1\right)$.  Then, $\nabla K$ defines a convolution operator, bounded componentwise from $ L^1\cap H^{\beta,r}(\mathbb{R}^d)$ to $L^\infty(\R^d)$.  
\end{proposition}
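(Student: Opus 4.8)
The plan is to split $\nabla K$ exactly as in the decomposition \eqref{eq:stima_gradK}, writing $|\nabla K(x)|\le C_{2,\nu}(K_1(x)+K_2(x))$ with $K_1$ supported in $B(0,\nu)$ and behaving like $\epsilon|x|^{-(a+2)}$, and $K_2$ supported in $B^c(0,\nu)$ and behaving like $\epsilon|x|^{-(b+2)}$. The far-field piece $K_2$ is the easy one: since $b<a<d-1$ we have $b+2<d+1$, but in fact $K_2$ decays fast enough at infinity and has no singularity, so for $q$ large $K_2\in L^{q}(\mathbb R^d)$ for every $q$ with $q(b+2)>d$, and by Young's inequality $\|K_2*f\|_{L^\infty}\le \|K_2\|_{L^{\infty}}\|f\|_{L^1}$ (or, more carefully, one pairs $K_2$ against $f\in L^1$ directly since $K_2$ is bounded on $B^c(0,\nu)$). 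So the contribution of $K_2$ is controlled by $\|f\|_{L^1(\mathbb R^d)}$ alone.

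The core of the argument is the near-field piece $K_1$, which is genuinely super-singular: $a+2>d$, so $K_1\notin L^1_{\mathrm{loc}}$ and neither Young's inequality nor Hardy--Littlewood--Sobolev applies. The idea is to trade the loss of classical integrability against the smoothing of the Bessel scale carried by $f$. Concretely, I would use the characterization of $H^{\beta,r}$ via the Bessel kernel $G_\beta=\mathcal F^{-1}((1+|\cdot|^2)^{-\beta/2})$, writing $f=G_\beta * g$ with $g=(\mathrm I-\Delta)^{\beta/2}f\in L^r(\mathbb R^d)$ and $\|g\|_{L^r}=\|f\|_{H^{\beta,r}}$. Then $K_1*f=(K_1*G_\beta)*g$, and the task reduces to showing that the kernel $H:=K_1*G_\beta$ lies in $L^{r'}(\mathbb R^d)$, so that by H\"older's inequality $\|K_1*f\|_{L^\infty}\le\|H\|_{L^{r'}}\|g\|_{L^r}=\|H\|_{L^{r'}}\|f\|_{H^{\beta,r}}$. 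To estimate $H$ one uses the two-sided bounds on the Bessel kernel: $G_\beta(x)\sim |x|^{\beta-d}$ as $|x|\to0$ (for $0<\beta<d$) together with exponential decay at infinity, and $G_\beta$ bounded (indeed smooth) when $\beta\ge d$. Convolving $|x|^{-(a+2)}\mathbbm 1_{B(0,\nu)}$ with $G_\beta$ then produces, near the origin, a function behaving like $|x|^{\beta-d}*|x|^{-(a+2)}\sim |x|^{\beta-d-(a+2)+d}=|x|^{\beta-(a+2)}$ when the convolution of the two homogeneous pieces is locally integrable, and bounded/continuous behaviour otherwise; the condition $\beta-\frac dr>2-d+a$, i.e. $\beta-(a+2)> -d+\frac dr=-\frac{d}{r'}$, is exactly what guarantees $|x|^{\beta-(a+2)}\in L^{r'}_{\mathrm{loc}}$, while the compact support of $K_1$ plus the decay of $G_\beta$ handles the behaviour at infinity. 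The upper constraint $\beta-\frac dr<1$ keeps the resulting kernel from being so regular that the statement would be vacuous, and more importantly ensures the formal computation of orders stays in the admissible range where the Bessel--Riesz convolution asymptotics hold; I would verify it enters through the requirement that $H$ not only be in $L^{r'}$ near $0$ but that the exponent bookkeeping $\beta-(a+2)+d>0$ be compatible. Summing the two pieces gives the claimed bound from $L^1\cap H^{\beta,r}$ to $L^\infty$, componentwise in the matrix $\nabla K$.

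I would organise it as: (i) recall $G_\beta$ and its pointwise two-sided estimates (cf. \cite{Triebel_1983}); (ii) dispose of $K_2$ by $\|K_2*f\|_{L^\infty}\lesssim\|f\|_{L^1}$; (iii) write $f=G_\beta*g$ and reduce to $\|K_1*G_\beta\|_{L^{r'}}<\infty$; (iv) estimate $K_1*G_\beta$ by splitting the integral into the region where the Bessel singularity dominates and the region where $K_1$'s singularity dominates, using the stated range of $\beta-d/r$ to secure local $L^{r'}$ integrability and the compact support for the tail.

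The main obstacle, and the step deserving the most care, is step (iv): getting the sharp local integrability exponent of $K_1*G_\beta$ right. One must be careful that $K_1$ is not homogeneous (it is a truncation), that $G_\beta$'s singularity $|x|^{\beta-d}$ is only valid for $0<\beta<d$ (the cases $\beta\le 0$ and $\beta\ge d$ need separate, easier, treatment since then $G_\beta$ is either more singular — requiring $\beta>2-d+a-d/r'$... actually $\beta$ can be negative here so $G_\beta$ is a distribution and one should instead keep $f$ and use that $(\mathrm I-\Delta)^{\beta/2}$ maps into $L^r$), and that the borderline exponents are excluded by the strict inequalities. A clean way to avoid the distributional subtlety when $\beta<0$ is to note $H^{\beta,r}\hookrightarrow H^{\beta',r}$ is false for $\beta<\beta'$ in the wrong direction, so instead one argues directly on the Fourier side: $K_1*f$ has a representation whose $L^\infty$ norm is bounded by $\|\widehat{K_1}\,(1+|\cdot|^2)^{-\beta/2}\|_{L^{r'}}\|f\|_{H^{\beta,r}}$ via Hausdorff--Young-type duality, but since $r'$ may be $<2$ this is delicate — hence the physical-space argument with $G_\beta$ and careful case distinction on the sign of $\beta$ is the safer route, and that case analysis is where the real work lies.
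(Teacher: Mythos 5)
Your treatment of the far-field piece is fine, but the core step (iii)--(iv) contains a genuine gap that cannot be repaired as stated. Once you replace $\nabla K$ by the pointwise bound $|\nabla K|\le C_{2,\nu}(K_1+K_2)$ of \eqref{eq:stima_gradK}, the near-field kernel $K_1(x)=\epsilon|x|^{-(a+2)}\mathbbm{1}_{B(0,\nu)}(x)$ is \emph{not} locally integrable in the super-singular regime $a\in(d-2,d-1)$, since $a+2>d$. Consequently $K_1\ast G_\beta(x)=\int_{|z|\le\nu}|z|^{-(a+2)}G_\beta(x-z)\,dz=+\infty$ for every $x$ (the Bessel kernel is strictly positive and continuous away from its singularity), so the reduction ``show $K_1\ast G_\beta\in L^{r'}$ and conclude by H\"older'' is vacuous; likewise the Riesz composition asymptotics $|x|^{\beta-d}\ast|x|^{-(a+2)}\sim|x|^{\beta-(a+2)}$ that you invoke are only valid when both orders lie in $(0,d)$, which fails here. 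More fundamentally, $K_1\ast|f|$ is itself infinite for any continuous $f$ not vanishing at the point of evaluation, so \emph{no} argument that starts from absolute values of the kernel can give the claimed $L^\infty$ bound: finiteness of $\nabla K\ast f$ in this regime is not a size estimate but a cancellation estimate, resting on the vanishing spherical average (odd symmetry) of the gradient kernel, which allows one to subtract $f(x)$ and pay only $|x-y|^{-(a+2)+\eta}$ against the H\"older seminorm of $f$.

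This is exactly how the paper argues: it uses the Sobolev embedding $H^{\beta,r}(\R^d)\hookrightarrow C^{\beta-d/r}(\R^d)$ (valid under the stated hypothesis $\beta-d/r\in(2-d+a,1)$, cf. \cite{Triebel_1978}) to place $f$ in $L^1\cap C^\eta(\R^d)$ with $\eta>2-d+a$, and then applies \cite[Lemma 2.5]{2016_Duerinckx}, which encodes precisely the cancellation argument above and gives boundedness of the near-field convolution from $L^1\cap C^\eta(\R^d)$ to $L^\infty(\R^d)$; the far field is handled by boundedness of the kernel away from the origin (for $b<d-2$ via H\"older's inequality, otherwise by the same lemma). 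Your exponent bookkeeping does recover the correct threshold $\beta-d/r>2-d+a$, and a cancellation-corrected version of your factorization (defining $\nabla K\mathbbm{1}_{B(0,\nu)}\ast G_\beta$ as a principal value and justifying associativity with $g=(\mathrm{I}-\Delta)^{\beta/2}f$) might be made rigorous, but that is substantial extra work which your proposal neither carries out nor identifies as the essential difficulty; as written, the key step fails.
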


\begin{proof}
 If 
$  \nabla T f(x):=\nabla K \ast f(x)$, from    \eqref{eq:stima_gradK}  we get
    \begin{equation*}
        \begin{split}
    |\nabla Tf(x)|  &\le C_{2,\nu} \left({\int_{|x-y|\le \nu} \frac{\epsilon}{|x-y|^{a+2}} |f(y)| \, dy}
+  {\int_{|x-y|>\nu} \frac{\epsilon}{|x-y|^{b+2}} |f(y)| \, dy}\right) \\& =: C_{2,\nu} \left(\tilde{I}_1(x)+\tilde{I}_2(x)\right).
\end{split}
\end{equation*}
with 
$C_{2,\nu}$ as in \eqref{eq:C_i_nu}. 
By applying \cite[Lemmma 2.5]{2016_Duerinckx} on the term $\tilde{I}_1(x)$, it defines a convolution operator bounded componentwise from $L^1\cap C^\eta(\R^d)$ to $L^\infty(\R^d)$, for any $\eta\in (2-d+a,1]$. 
In particular, one has
$$
 \| \tilde{I}_1 \ast g\|_{L^\infty(\mathbb{R}^d)} 
 \le C\Big(\|g\|_{L^1(\mathbb{R}^d)} +  \|g\|_{0,\eta} \Big).
$$
However, under the assumption $\beta-\displaystyle\frac{d}{r}\in \left(2-d+a,1\right)$, the Bessel potential space $H^{\beta,r}(\R^d)$ is continuously embedded into the H\"older space $C^{\beta - d/r}(\R^d)$ (see, for example, \cite{Triebel_1978}). By interpolation $L^1\cap H^{\beta, r}(\mathbb{R}^d)$ is continuously embedded into $L^1 \cap L^\infty(\R^d)$. In conclusion, for any $f \in L^1 \cap H^{\beta, r}(\R^d),$
$$
 \| \tilde{I}_1 \ast f\|_{L^\infty(\R^d)} 
 \le  C\|f\|_{L^1 \cap H^{\beta, r}(\R^d)}.
$$
In order to handle the attractive contribution, we have to distinguish two cases.

Let $b\in [d-2,a)$. Following the same reasoning as above, and noting that obviously $2-d+a>2-d+b$, it follows 
$$\| \tilde{I}_2\ast f\|_{L^\infty(\R^d)}\le C\|f\|_{L^1 \cap H^{\beta, r}(\R^d)}.$$

Let $b\in(0,d-2)$. In this case we can apply the H\"older's inequality, hence, 

$$\| \tilde{I}_2 \ast f\|_{L^\infty(\R^d)} \le \| \tilde{I}_2\|_{L^{\bar{q}}(\R^d)}\|f\|_{L^{\bar{q}\,^\prime}(\R^d)},$$ where $\bar{q}$ was defined in Proposition \ref{Prop:Grad_K1}, and $\bar{q}\,^\prime$ denotes the conjugate exponent. As previously, $L^1 \cap H^{\beta,r}(\R^d)$ is continuously embedded into $L^1 \cap L^\infty(\R^d)$. Therefore, if $f \in L^1 \cap H^{\beta, r}(\R^d)$, then by interpolation arguments we have $f \in L^p(\R^d)$ for any $p \in [1,+\infty]$.

Combining the two contributions, we conclude 
$$\|\nabla K\ast f\|_{L^\infty(\R^d)}\le C\|f\|_{L^1 \cap H^{\beta, r}(\R^d)}.$$

  \end{proof}

\begin{proposition}\label{Prop:HoderSpaceSuperCri}
    Let $d \ge 2$. Consider the Lennard-Jones force $K$ given in \eqref{eq:JL_potential_force} with parameters  $a>b>0$ and $a \in (d-2,d-1)$. Let  $q \in \left(\displaystyle\frac{d}{b+1},+\infty\right]$,   $r\in [1,+\infty]$ and $\beta \in \R$ such that $\beta-\displaystyle\frac{d}{r}\in (2-d+a,1)$. Then, for any  $f\in L^1\cap H^{\beta, r}(\mathbb{R}^d)$, we get that $K\ast f(x) \in L^q(\R^d)$,  and $\nabla K \ast f \in L^{\infty}(\R^d)$. Moreover, by choosing $q=+\infty$, $K \ast f$ is Lipschitz continuous function.  
\end{proposition}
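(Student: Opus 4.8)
The plan is to reduce Proposition~\ref{Prop:HoderSpaceSuperCri} to Proposition~\ref{Prop:Grad_K3} and to the convolution estimates already established for $K$ itself, treating the two claims ($K\ast f\in L^q$ with $\nabla K\ast f\in L^\infty$, and the Lipschitz refinement when $q=+\infty$) in sequence. First I would observe that the statement on $\nabla K\ast f$ is \emph{exactly} the conclusion of Proposition~\ref{Prop:Grad_K3}: under the standing hypothesis $a\in(d-2,d-1)$ and $\beta-d/r\in(2-d+a,1)$, that proposition gives
$$
\|\nabla K\ast f\|_{L^\infty(\R^d)}\le C\,\|f\|_{L^1\cap H^{\beta,r}(\R^d)},
$$
so nothing new is needed there. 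The content to be added is therefore the $L^q$-bound on $K\ast f$ and the Lipschitz statement.

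For $K\ast f\in L^q(\R^d)$ with $q\in(d/(b+1),+\infty]$, I would reuse the decomposition \eqref{eq:stima_K}, $|K(x)|\le C_{1,\nu}(K_1(x)+K_2(x))$ with $K_1$ supported in $B(0,\nu)$ behaving like $|x|^{-(a+1)}$ and $K_2$ supported outside, behaving like $|x|^{-(b+1)}$. The far part $K_2$ lies in $L^q(\R^d)$ by Proposition~\ref{Prop:integrability_pq}(ii), so Young's inequality gives $\|K_2\ast f\|_{L^q}\le\|K_2\|_{L^q}\|f\|_{L^1}$. For the near part, since $a\in(d-2,d-1)$ we have $a+1\in(d-1,d)$, hence $K_1$ is still in $L^1_{\mathrm{loc}}$; more to the point, $K_1$ is a truncated Riesz kernel of order $d-(a+1)\in(0,1)$, and the key observation is that $\beta-d/r>2-d+a$ forces $\beta-d/r>1-(d-(a+1))$, i.e.\ the Sobolev/Hölder regularity of $f$ is more than enough to absorb this sub-unit singularity. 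Concretely, the Hölder embedding $H^{\beta,r}\hookrightarrow C^{\beta-d/r}$ and the interpolation $L^1\cap H^{\beta,r}\hookrightarrow L^1\cap L^\infty\hookrightarrow L^p$ for every $p\in[1,+\infty]$ (both already used in the proof of Proposition~\ref{Prop:Grad_K3}) put $f$ in any Lebesgue space; then I pick an exponent $p$ with $1/q=1+1/p-1/p_1$ for some $p_1<d/(a+1)$ — possible precisely because $K_1\in L^{p_1}$ near the origin by Proposition~\ref{Prop:integrability_pq}(i) — and apply Young's inequality to get $\|K_1\ast f\|_{L^q}\le\|K_1\|_{L^{p_1}}\|f\|_{L^p}\le C\|f\|_{L^1\cap H^{\beta,r}}$. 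Summing the two contributions yields $\|K\ast f\|_{L^q(\R^d)}\le C\|f\|_{L^1\cap H^{\beta,r}(\R^d)}$, including the endpoint $q=+\infty$ via Hölder in place of Young.

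Finally, for the Lipschitz claim when $q=+\infty$: having shown $K\ast f\in L^\infty$ and $\nabla K\ast f\in L^\infty$ (the latter from Proposition~\ref{Prop:Grad_K3}), and since $f\in L^1\cap H^{\beta,r}$ is in particular regular enough that differentiation under the convolution is justified ($\nabla(K\ast f)=(\nabla K)\ast f$ in the distributional sense, with an $L^\infty$ representative), I conclude that $K\ast f\in W^{1,\infty}(\R^d)$ and hence is globally Lipschitz with constant controlled by $\|\nabla K\ast f\|_{L^\infty}\le C\|f\|_{L^1\cap H^{\beta,r}}$. The main obstacle — and the only genuinely delicate point — is the near-origin term of $K\ast f$ in the super-singular regime: one must be careful that $a+1<d$ still holds (so $K_1\in L^1_{\mathrm{loc}}$ and Young applies with a nontrivial $L^{p_1}$, $p_1>1$), which is guaranteed exactly by $a<d-1$; were $a\ge d-1$ one would be forced into the Calderón–Zygmund/HLS machinery of Propositions~\ref{Prop:Grad_K2-bis}–\ref{Prop:Grad_K3} for $K$ itself rather than just for $\nabla K$. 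Everything else is bookkeeping with Young's and Hölder's inequalities and the embeddings already invoked above.
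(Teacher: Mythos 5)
Your proposal is correct and follows essentially the same route as the paper, which simply cites Proposition \ref{prop:LJ_con_op} (whose hypotheses your embedding $L^1\cap H^{\beta,r}\hookrightarrow L^1\cap L^p$ for all $p$ verifies) for the $L^q$ bound on $K\ast f$, Proposition \ref{Prop:Grad_K3} for $\nabla K\ast f\in L^\infty$, and the argument of Proposition \ref{Prop:HoderSpaceSubCri} (bounded gradient, hence Lipschitz) for the case $q=+\infty$; you merely unpack the Young/H\"older decomposition instead of citing it. Only note the harmless slip in your Young bookkeeping: the relation should read $1+\frac{1}{q}=\frac{1}{p_1}+\frac{1}{p}$, which poses no problem since $f$ belongs to every $L^p(\R^d)$, $p\in[1,+\infty]$.
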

\begin{proof}

 The first part of the proposition follows directly from Proposition \ref{prop:LJ_con_op} and from the previous proposition, while the second part follows the same arguments as those presented in Proposition \ref{Prop:HoderSpaceSubCri}. 

\end{proof}

Before concluding this section, we address the case $a = d-2$, but this time, by choosing $f$ in a suitable Bessel space in order to ensure the Lipschitz continuity of the operator $K \ast f$.

\begin{proposition}\label{Prop:Grad_K2_Bessel}
  Let $d \ge 3$ fixed. Consider the Lennard-Jones force $K$ given in \eqref{eq:JL_potential_force}  with $d-2=a>b>0$,   $r\in [1,+\infty]$, $\beta \in \R$ such that $\beta-\displaystyle\frac{d}{r}\in \left(0,1\right)$, and $r_1\in (1,+\infty]$.  Then, $\nabla K$ defines a convolution operator, bounded componentwise from $ L^1\cap H^{\beta,r}(\mathbb{R}^d)$ to $L^{r_1}(\R^d)$. Moreover, let $q\in \left(\displaystyle\frac{d}{b+1},+\infty\right]$, we have that $K\ast f\in L^{q}(\R^d)$. Letting $r_1=q=+\infty$, then
  $$|K\ast f|_1\le C\|f\|_{L^1 \cap H^{\beta,r}(\R^d)}.$$
  
\end{proposition}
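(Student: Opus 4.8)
The plan is to follow the decomposition strategy already established in Propositions \ref{Prop:Grad_K3} and \ref{Prop:HoderSpaceSuperCri}, but now exploiting the fact that in the critical case $a = d-2$ the local part of $\nabla K$ is a Calderón–Zygmund kernel rather than a sub-singular Riesz potential. First I would write the estimate \eqref{eq:stima_gradK} to split $|\nabla K(x)| \le C_{2,\nu}(K_1(x) + K_2(x))$ with $K_1$ supported in $B(0,\nu)$ behaving like $\epsilon |x|^{-(a+2)} = \epsilon |x|^{-d}$, and $K_2$ supported outside with decay $|x|^{-(b+2)}$. For the singular local part, I would invoke \cite[Lemma 2.5]{2016_Duerinckx} (or equivalently the Calderón–Zygmund argument of Proposition \ref{Prop:Grad_K2}) to obtain that $K_1$ defines a convolution operator bounded componentwise from $L^1 \cap C^\eta(\R^d)$ into $L^{r_1}(\R^d)$ for every $\eta \in (0,1]$ and $r_1 \in (1,+\infty]$ — note that in the case $\lambda = d - (a+2) = 0$ one loses the $L^\infty$ endpoint gained in the sub-singular regime, which is precisely why the target is $L^{r_1}$ with $r_1 < \infty$ a priori, and the endpoint $r_1 = +\infty$ requires the additional Hölder input on $f$.

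Next I would handle the embedding: under the hypothesis $\beta - d/r \in (0,1)$, the Bessel potential space $H^{\beta,r}(\R^d)$ embeds continuously into the Hölder space $C^{\beta - d/r}(\R^d)$ (see \cite{Triebel_1978}), so setting $\eta = \beta - d/r$ and combining with the trivial embedding, $L^1 \cap H^{\beta,r}(\R^d)$ embeds into $L^1 \cap C^\eta(\R^d)$, hence by interpolation also into $L^p(\R^d)$ for all $p \in [1,+\infty]$. This gives $\|K_1 \ast f\|_{L^{r_1}(\R^d)} \le C\|f\|_{L^1 \cap H^{\beta,r}(\R^d)}$. For the attractive tail $K_2$, I would split into the two subcases exactly as in Proposition \ref{Prop:Grad_K3}: if $b \in [d-2,a)$ — which is vacuous here since $a = d-2$, so in fact only $b \in (0,d-2)$ occurs — one applies Hölder's inequality with $\|K_2\|_{L^{\bar q}(\R^d)} < \infty$ (finite by Proposition \ref{Prop:Grad_K1} since $\bar q > d/(b+2)$) against $\|f\|_{L^{\bar q'}(\R^d)}$, which is controlled by the Bessel norm via the interpolation just noted. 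Combining the two contributions yields $\|\nabla K \ast f\|_{L^{r_1}(\R^d)} \le C\|f\|_{L^1 \cap H^{\beta,r}(\R^d)}$ for $r_1 \in (1,+\infty]$.

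For the statement about $K \ast f$, I would invoke Proposition \ref{prop:LJ_con_op} (or Remark \ref{Rem:Grad_K2}) to get $K \ast f \in L^q(\R^d)$ for $q \in (d/(b+1),+\infty]$, using that $f \in L^1 \cap L^p(\R^d)$ for all admissible $p$ by the interpolation argument. Finally, for the Lipschitz claim when $r_1 = q = +\infty$: having $K \ast f \in L^\infty$ and $\nabla K \ast f \in L^\infty$ means $K \ast f \in W^{1,\infty}(\R^d)$, and by Morrey's inequality (or directly, since a bounded function with bounded distributional gradient is globally Lipschitz) one concludes $|K \ast f|_1 \le \|\nabla(K \ast f)\|_{L^\infty(\R^d)} \le C\|f\|_{L^1 \cap H^{\beta,r}(\R^d)}$.

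The main obstacle I anticipate is the endpoint $r_1 = +\infty$ for the Calderón–Zygmund part: the Calderón–Zygmund operator associated to $K_1$ is bounded on $L^{r_1}$ only for $r_1 \in (1,+\infty)$ and is \emph{not} bounded into $L^\infty$ in general, so reaching $L^\infty$ must go through the Hölder regularity of $f$ and an argument of the Duerinckx type (pairing the cancellation of the kernel against the modulus of continuity of $f$ to bound the principal-value integral pointwise), rather than through a naive $L^p$–$L^\infty$ duality. Care is also needed to confirm that the window $\beta - d/r \in (0,1)$ is exactly the range for which this Hölder-to-$L^\infty$ mechanism closes, matching the shift from $(2-d+a,1)$ in Proposition \ref{Prop:Grad_K3} to $(0,1)$ here, which is forced by $\lambda = 0$.
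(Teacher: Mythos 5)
Your argument is essentially the paper's own proof, spelled out: the paper simply says to repeat the decomposition \eqref{eq:stima_gradK} and the Duerinckx/H\"older-embedding mechanism of Propositions \ref{Prop:Grad_K3} and \ref{Prop:HoderSpaceSuperCri} (with the admissible H\"older window becoming $(0,1)$ because $2-d+a=0$), and obtains the Lipschitz bound from Morrey's inequality, exactly as you do. Your extra observations — that only $b\in(0,d-2)$ can occur, and that the $r_1=+\infty$ endpoint must pass through the H\"older regularity of $f$ rather than Calder\'on–Zygmund $L^p$ bounds — are correct refinements of what the paper leaves implicit.
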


\begin{proof}
   By applying the same arguments used in Proposition \ref{Prop:Grad_K3} and Proposition \ref{Prop:HoderSpaceSuperCri}, the claim follows. The latter follows directly from Morrey's inequality. 
\end{proof}

\section{A McKean-Vlasov dynamics at the macro and microscale }\label{Sec:PDE}

Now, we present a rigorous result of existence and uniqueness of a mild solution of a McKean-Vlasov SDE involving a singular drift as the free parameters Lennard-Jones force. We relate such equation to the microscopic description of the dynamics of a typical Brownian particle interacting at a large scale with a field $u$, which evolves at the macroscale via a diffusion-advection PDE.

\emph{A dynamics at the macroscale.} Let  $T>0$ be fixed. Given $K$  the general Lennard-Jones kernel  \eqref{eq:JL_potential_force}, let us consider the following density dynamics at the macroscale, described by the Fokker-Planck equation
\begin{equation}\label{PDE: FP1}
\begin{cases}
 \partial_t u(t,x)  = \Delta u(t,x) -\nabla \cdot \left (u(t,x)(K \ast u (t,x)) \right), \quad (t,x)\in (0,T]\times \mathbb{R}^d;\\
u(0,x) = u_0(x), \hspace{6.5cm}  x\in \mathbb{R}^d.
\end{cases}
\end{equation}

Since $K$   is singular at the origin, the well-posedness is not guaranteed a priori. First, we observe that equation \eqref{PDE: FP1} preserves the total mass $m:=\int_{\mathbb{R}^d}u_0(x) \ dx$, and we assume throughout the paper that $m=1$. 

For each $T>0$ and $z\ge1$, let us consider the   space of continuous functions from $[0,T]$ into $L^1 \cap L^z(\mathbb{R}^d)$
\begin{align*} 
 C^{z}_T:=C\left([0,T]; L^1 \cap L^z(\mathbb{R}^d)\right),
\end{align*}
 with the associated norm $ \left\|f\right\|_{C^z_T}:=\sup_{s \in [0,T]} \left\|f(s,\cdot)\right\|_{L^1 \cap L^z(\mathbb{R}^d)}$.

We briefly review the heat semigroup and the core estimates associated with it.

\begin{definition}[Heat semigroup]
    
 The family of operators $\left(e^{t\Delta }\right)_{t\geq 0 }$ denotes the heat  semigroup, defined for $z\ge 1$ and $f \in L^z(\mathbb{R}^d)$. It is given by 
\begin{equation*}
  e^{t\Delta}f(x)   =\int_{\mathbb{R}^{d}} g_{2t}(x-y) \,  f\left(  y\right) \  dy, 
\end{equation*}
where $g_{\sigma^2}$ denotes the $d$-dimensional Gaussian density function
$
g_{\sigma^2}(x)=  (2\pi \sigma^2)^{-{d}/{2}} e^{-{|x|^2}/{2\sigma^2}}.
$
\end{definition}

\begin{proposition}[\cite{Amar_2022}]\label{HeatSemi_Ineq_Lp}
    Let $z\ge 1$ and consider the space $L^z(\mathbb{R}^d)$.  Then, $ \left\Vert \nabla g_{2t}\right\Vert _{L^{1}(\mathbb{R}^d)}=C_\Delta t^{-1/2}$,  and   
      \begin{eqnarray}\label{eq:heat_semigroup_estimate}
  \left \Vert \nabla e^{ t \Delta}\right\Vert _{L^z
\to L^z}&:=&\sup_{\substack{f \in L^z(\mathbb{R}^d) \\f\neq0}}\frac{\|\nabla e^{ t \Delta}f\|_{L^z(\mathbb{R}^d)}}{\|f\|_{L^z(\mathbb{R}^d)}}\le \frac{C_\Delta}{\sqrt{t}}.\end{eqnarray} The constant $C_\Delta$ is given by
 with \begin{equation*}\label{eq:C_Delta}
 C_\Delta = {\Gamma \left(\tfrac{d+1}{2}\right)}/{\Gamma\left(\tfrac{d}{2}\right)}.\end{equation*}
 
\end{proposition}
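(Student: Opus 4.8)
The plan is to reduce the statement to an explicit computation with the Gaussian kernel together with a single application of Young's convolution inequality. Write $e^{t\Delta}f = g_{2t}\ast f$ with $g_{2t}(x) = (4\pi t)^{-d/2}e^{-|x|^2/(4t)}$, and observe that, since $g_{2t}$ is smooth with all derivatives rapidly decaying, one may differentiate under the convolution sign, so that $\nabla e^{t\Delta}f = (\nabla g_{2t})\ast f$. A direct computation gives $\nabla g_{2t}(x) = -\tfrac{x}{2t}\,g_{2t}(x)$, hence $|\nabla g_{2t}(x)| = \tfrac{|x|}{2t}\,g_{2t}(x)$.

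The first assertion then amounts to evaluating $\|\nabla g_{2t}\|_{L^1(\mathbb{R}^d)} = \tfrac{1}{2t}\int_{\mathbb{R}^d}|x|\,g_{2t}(x)\,dx$. First I would pass to polar coordinates, using that the surface measure of the unit sphere $S^{d-1}$ equals $2\pi^{d/2}/\Gamma(d/2)$, and then substitute $u = |x|^2/(4t)$ to recast the radial integral $\int_0^\infty r^d e^{-r^2/(4t)}\,dr$ as a Gamma integral; this yields $\int_{\mathbb{R}^d}|x|\,g_{2t}(x)\,dx = 2\sqrt{t}\,\Gamma(\tfrac{d+1}{2})/\Gamma(\tfrac{d}{2})$. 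Equivalently, this integral is the first absolute moment $\mathbb{E}|Z|$ of a centered Gaussian vector $Z$ with covariance $2t\,I_d$, which one reads off the chi distribution with $d$ degrees of freedom. Dividing by $2t$ gives $\|\nabla g_{2t}\|_{L^1(\mathbb{R}^d)} = C_\Delta\, t^{-1/2}$ with $C_\Delta = \Gamma(\tfrac{d+1}{2})/\Gamma(\tfrac{d}{2})$, which is the first claim.

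For the operator-norm bound, Young's convolution inequality applied to $\nabla e^{t\Delta}f = (\nabla g_{2t})\ast f$ gives $\|\nabla e^{t\Delta}f\|_{L^z(\mathbb{R}^d)} \le \|\nabla g_{2t}\|_{L^1(\mathbb{R}^d)}\,\|f\|_{L^z(\mathbb{R}^d)}$ for every $z\ge 1$ and every $f\in L^z(\mathbb{R}^d)$; combining this with the identity just established yields $\|\nabla e^{t\Delta}\|_{L^z\to L^z}\le C_\Delta/\sqrt{t}$, as asserted. There is no genuine obstacle in this argument: the only points demanding a little care are the justification of differentiating under the integral sign, which is immediate from the Schwartz decay of $g_{2t}$, and the bookkeeping in the Gamma-function evaluation of the constant; both are routine.
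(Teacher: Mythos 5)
Your proof is correct, and it is the standard argument that the paper itself delegates to the cited reference rather than proving in-text: the identity $\nabla e^{t\Delta}f=(\nabla g_{2t})\ast f$, the explicit Gaussian computation $\|\nabla g_{2t}\|_{L^1(\mathbb{R}^d)}=\tfrac{1}{2t}\int_{\mathbb{R}^d}|x|\,g_{2t}(x)\,dx=\Gamma\bigl(\tfrac{d+1}{2}\bigr)\Gamma\bigl(\tfrac{d}{2}\bigr)^{-1}t^{-1/2}$, and Young's inequality with the $L^1$--$L^z$ pairing. Your evaluation of the constant matches the paper's $C_\Delta$ exactly, so there is nothing to correct.
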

Let us also  recall two classical estimates related to the  the  fractional operator \eqref{eq:fractional_heat_operator}.
\begin{proposition}[\cite{1983_Pazy}]\label{Es:HeatSemigroup_Bessel}
    Let $z\ge1$ and $\beta \in (0,1)$. The following estimates for the heat semigroup holds
    \begin{eqnarray}\label{eq:stima_Bessel_semigroup}
    \left\Vert \left(\mathrm{I}-\Delta \right)^{\frac{\beta}{2}} g_{2t}\right\Vert _{L^1(\mathbb{R}^d)}&\le &\frac{C_\Delta^\prime}{\sqrt{t^\beta}};
\\\label{In:BesselHeat}
  \left \Vert \left(\mathrm{I}-\Delta \right)^{\frac{\beta}{2}} e^{ t \Delta}\right\Vert _{L^{z}
\rightarrow L^{z}} &\le& \frac{C_\Delta^\prime}{\sqrt{t^\beta}}.
\end{eqnarray} 
\end{proposition}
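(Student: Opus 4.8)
The plan is to prove the kernel bound \eqref{eq:stima_Bessel_semigroup} first and then deduce the operator bound \eqref{In:BesselHeat} from it by Young's convolution inequality; this keeps the argument self-contained and, importantly, makes it work uniformly for all $z\in[1,+\infty]$ (including $z=1$, where the heat semigroup is not analytic, so the abstract fractional-power machinery does not apply directly). Throughout, $t$ ranges over the fixed interval $(0,T]$, so that any additive constant may be absorbed into a factor $t^{-\beta/2}$ at the price of a $T$-dependent constant.

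For \eqref{eq:stima_Bessel_semigroup}, note that $(\mathrm I-\Delta)^{\beta/2}g_{2t}=\mathcal F^{-1}\!\big((1+|\xi|^{2})^{\beta/2}e^{-t|\xi|^{2}}\big)$ is a Schwartz function, hence in $L^{1}(\R^d)$; the point is the $t$-dependence of its norm. A plain scaling argument (as for $\nabla g_{2t}$ in Proposition \ref{HeatSemi_Ineq_Lp}) fails because the symbol $(1+|\xi|^{2})^{\beta/2}$ is not homogeneous. I would instead use the subordination identity $\lambda^{\gamma}=\frac{\gamma}{\Gamma(1-\gamma)}\int_0^{\infty}(1-e^{-\lambda s})\,s^{-1-\gamma}\,ds$, valid for $\gamma=\beta/2\in(0,1)$ and $\lambda\ge 0$, with $\lambda=1+|\xi|^{2}$. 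Multiplying by $e^{-t|\xi|^{2}}=\mathcal F g_{2t}(\xi)$ and inverting the Fourier transform — the interchange with the $s$-integral being legitimate once absolute convergence in $L^{1}$ is checked — yields
\[
(\mathrm I-\Delta)^{\beta/2}g_{2t}=c_{\beta}\int_0^{\infty}\frac{g_{2t}-e^{-s}g_{2(t+s)}}{s^{1+\beta/2}}\,ds,\qquad c_{\beta}=\tfrac{\beta/2}{\Gamma(1-\beta/2)}.
\]
Taking $L^{1}$ norms and splitting $g_{2t}-e^{-s}g_{2(t+s)}=(g_{2t}-g_{2(t+s)})+(1-e^{-s})g_{2(t+s)}$, one uses $\|g_{2\tau}\|_{L^{1}}=1$ and the elementary heat-kernel difference bound $\|g_{2t}-g_{2(t+s)}\|_{L^{1}}\le C\min\{1,s/t\}$ (from $g_{2(t+s)}-g_{2t}=\int_t^{t+s}\Delta g_{2\tau}\,d\tau$ together with $\|\Delta g_{2\tau}\|_{L^{1}}=\tau^{-1}\|\Delta g_{2}\|_{L^{1}}$ for $s\le t$, and the trivial bound $2$ for $s\ge t$). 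Then $\int_0^{\infty}s^{-1-\beta/2}\min\{1,s/t\}\,ds=\big(\tfrac{1}{1-\beta/2}+\tfrac{2}{\beta}\big)t^{-\beta/2}$ gives the claimed power, while $\int_0^{\infty}s^{-1-\beta/2}(1-e^{-s})\,ds=\Gamma(1-\beta/2)/(\beta/2)$ is a finite constant, bounded by $T^{\beta/2}t^{-\beta/2}$ on $(0,T]$; collecting constants yields \eqref{eq:stima_Bessel_semigroup} with $C_{\Delta}'=C_{\Delta}'(d,\beta,T)$.

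For \eqref{In:BesselHeat}, since $(\mathrm I-\Delta)^{\beta/2}$ and $e^{t\Delta}$ are Fourier multipliers they commute, and their composition has symbol $(1+|\xi|^{2})^{\beta/2}e^{-t|\xi|^{2}}$, i.e. it is convolution against $(\mathrm I-\Delta)^{\beta/2}g_{2t}$. Hence $(\mathrm I-\Delta)^{\beta/2}e^{t\Delta}f=\big((\mathrm I-\Delta)^{\beta/2}g_{2t}\big)\ast f$, and Young's inequality gives $\|(\mathrm I-\Delta)^{\beta/2}e^{t\Delta}f\|_{L^{z}}\le\|(\mathrm I-\Delta)^{\beta/2}g_{2t}\|_{L^{1}}\|f\|_{L^{z}}\le C_{\Delta}' t^{-\beta/2}\|f\|_{L^{z}}$ for every $z\in[1,+\infty]$, which is \eqref{In:BesselHeat}. (For $1<z<\infty$ one could alternatively quote the abstract estimate $\|A^{\beta/2}e^{-tA}\|_{L^{z}\to L^{z}}\le C t^{-\beta/2}$ for $A=\mathrm I-\Delta$, which generates an analytic semigroup on $L^{z}$ with $0$ in its resolvent set — the Pazy reference — and then use $e^{t\Delta}=e^{t}e^{-tA}$ with $e^{t}\le e^{T}$; the convolution route is preferable only because it also covers $z=1$.)

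I do not expect a genuine obstacle here: the result is classical, and the only points requiring care are the Fubini interchange in the subordination formula and the heat-kernel difference estimate, both routine. The conceptual subtlety worth stressing is that, because of the inhomogeneity of the Bessel symbol, the natural bound is of the form $C(1+t^{-\beta/2})$, which is rewritten as $C_{\Delta}' t^{-\beta/2}$ only by exploiting the fixed horizon $t\le T$; an estimate uniform over all $t>0$ would instead carry an extra exponential factor $e^{t}$.
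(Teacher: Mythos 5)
The paper itself offers no proof here: the proposition is simply cited from Pazy's book, where the corresponding fact is a special case of the abstract estimate $\|A^{\gamma}e^{-tA}\|\le C t^{-\gamma}$ for fractional powers of sectorial generators of analytic semigroups, applied with $A=\mathrm I-\Delta$ on $L^z(\R^d)$ and then transferred to $e^{t\Delta}=e^{t}e^{-tA}$. Your argument takes a genuinely different, and in some ways preferable, route: you prove the scalar $L^1$ kernel bound \eqref{eq:stima_Bessel_semigroup} directly via the subordination representation $\lambda^{\gamma}=c_{\gamma}\int_0^\infty(1-e^{-\lambda s})s^{-1-\gamma}ds$, and then obtain the operator bound \eqref{In:BesselHeat} as a corollary of Young's inequality. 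All the steps check out: the subordination constant $c_\beta=\frac{\beta/2}{\Gamma(1-\beta/2)}$ is correct; the splitting $g_{2t}-e^{-s}g_{2(t+s)}=(g_{2t}-g_{2(t+s)})+(1-e^{-s})g_{2(t+s)}$ together with $\|\Delta g_{2\tau}\|_{L^1}=\tau^{-1}\|\Delta g_2\|_{L^1}$ yields $\|g_{2t}-g_{2(t+s)}\|_{L^1}\le C\min\{1,s/t\}$; the two $s$-integrals evaluate as you state; and dominated convergence justifies the Fourier/$s$-integral interchange since the $L^1$ majorant is integrable. Your observation that the bound is of the form $C(1+t^{-\beta/2})$ and that the rewriting as $C_\Delta' t^{-\beta/2}$ silently carries a $T$-dependence (or equivalently an $e^t$ factor if one uses $I-\Delta$ instead of $-\Delta$) is precisely right and worth making explicit, since the proposition as stated in the paper does not flag it.

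What your approach buys over the citation: a single self-contained computation that works uniformly for every $z\in[1,+\infty]$, because the $z$-dependence is entirely pushed into Young's inequality with the $z$-independent kernel bound. The one small inaccuracy is the claim that the heat semigroup is not analytic on $L^1(\R^d)$ and that this is what forces the convolution route for $z=1$. In fact $\Delta$ does generate a bounded analytic semigroup on $L^1(\R^d)$, so Pazy's abstract estimate applies there too; it is the endpoint $z=+\infty$ where $(e^{t\Delta})_{t\ge0}$ fails to be a $C_0$-semigroup and the abstract machinery genuinely does not apply, and where your convolution argument is the only one of the two that goes through directly. This does not affect the correctness of the proof, only the motivating remark.
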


The McKean-Vlasov PDE \eqref{PDE: FP1} admits a unique mild solution in the sense of the following definition. 
\begin{definition}[Mild solution of order $r$]\label{Def:mild} 
Let $K$  be as in \eqref{eq:JL_potential_force}, $u_0 \in L^1 \cap L^{r}(\mathbb{R}^d)$,  $r>  1$ and $T>0$. A function $u$ on $[0,T] \times \mathbb{R}^d$ is said to be a \emph{mild solution  of order $r$} to \eqref{PDE: FP1} if the following conditions hold
\begin{enumerate}[label=(\roman*)]
\item $u\in C^r_T=C\left([0,T]; L^1 \cap L^{r}(\mathbb{R}^d)\right)$;
\item $u$ satisfies the integral equation
\begin{equation}\label{eq: mild solution}
u(t,\cdot)=e^{t\Delta} u_0-\int_0^t \nabla \cdot \left ( e^{(t-s)\Delta }  (u(s,\cdot) \left(K \ast u(s,\cdot))\right) \right) \ ds, \quad 0 \leq t \leq T.
\end{equation}
\end{enumerate}
A function $u$ on $\left[0,+\infty\right) \times \mathbb{R}^d$ is said to be a \emph{global mild solution  of order $r$} to \eqref{PDE: FP1} if it is a mild solution to \eqref{PDE: FP1} on $[0,T]$ for all $T>0$.
\end{definition}

 \begin{theorem}[Well-posedness of the MKV-Fokker-Planck equation]\label{Teo: EUMild}
Let $d\ge 2$ fixed. Let us consider the PDE \eqref{PDE: FP1}   with $K$ given by  \eqref{eq:JL_potential_force} with   $d-1>a>b>0$ fixed.  
 Let $p \in \left[1,\frac{d}{a+1}\right[$ and  $p^\prime$ its conjugated exponent.   For any $u_0 \in L^1 \cap L^{r}(\mathbb{R}^d), $ with $r\ge p^\prime$, let us consider $T>0$ such that 
      \begin{equation}\label{Lemma contr: Stima1}
 C_{T,u_0}= 1- 4C_{\Delta,K,p,q}\sqrt{T}\left\|u_0\right\|_{L^1 \cap L^{r}(\mathbb{R}^d)}\in [0,1),
      \end{equation}
  where $C_{\Delta,K,p,q}=d C_{\Delta}C_{1,\nu} C_{K,p,q}$,with the constants as  in \eqref{eq:C_i_nu}, \eqref{eq:def_C_Kpq} and Proposition \ref{eq:C_Delta}.
      There exists a unique mild solution of order $r$ in the sense of Definition \ref{Def:mild} on $[0,T]$.  Moreover, 
      \begin{equation}\label{eq:bound_u_PDE}
          \left\|u\right\|_{C\left([0,T]; L^1 \cap L^{r}(\mathbb{R}^d)\right)} \le \displaystyle\frac{1- {C_{T,u_0}}^{1/2}}{2C_{\Delta,K,p,q}\sqrt{T}}.
      \end{equation}
    
\end{theorem}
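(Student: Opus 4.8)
The natural strategy is a fixed-point argument (Banach contraction) in the space $C^r_T = C\left([0,T]; L^1 \cap L^r(\mathbb{R}^d)\right)$, exploiting the integrability estimates established in Section \ref{Sec:LJ pot} — in particular Proposition \ref{prop:LJ_con_op}, which controls $K \ast f$ in $L^q$ and $L^\infty$ from $L^1 \cap L^{p'}$, together with the heat semigroup smoothing estimate \eqref{eq:heat_semigroup_estimate} of Proposition \ref{HeatSemi_Ineq_Lp}. Define the map $\Gamma$ on $C^r_T$ by
\begin{equation*}
(\Gamma u)(t,\cdot) = e^{t\Delta} u_0 - \int_0^t \nabla \cdot \left( e^{(t-s)\Delta}\big( u(s,\cdot)\,(K \ast u(s,\cdot))\big)\right) ds;
\end{equation*}
a mild solution of order $r$ is precisely a fixed point of $\Gamma$. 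First I would check that $\Gamma$ maps $C^r_T$ into itself: the linear term $e^{t\Delta}u_0$ is continuous in $t$ with values in $L^1 \cap L^r$ by standard semigroup properties and contractivity of $e^{t\Delta}$ on every $L^z$; for the nonlinear term one moves the gradient onto the heat kernel, bounds $\|\nabla e^{(t-s)\Delta} g\|_{L^z} \le C_\Delta (t-s)^{-1/2}\|g\|_{L^z}$ for $z \in \{1, r\}$, and then estimates the product $\|u(s)\,(K \ast u(s))\|_{L^1 \cap L^r}$ by Hölder: $\|u(s)(K\ast u(s))\|_{L^1} \le \|u(s)\|_{L^1}\|K \ast u(s)\|_{L^\infty}$ and $\|u(s)(K\ast u(s))\|_{L^r} \le \|u(s)\|_{L^r}\|K \ast u(s)\|_{L^\infty}$, invoking part (ii) of Proposition \ref{prop:LJ_con_op} (with the hypothesis $r \ge p'$ ensuring $u(s) \in L^1 \cap L^{p'}$ by interpolation) to get $\|K \ast u(s)\|_{L^\infty} \le C_{1,\nu}C_{K,p,q}\|u(s)\|_{L^1 \cap L^{p'}} \le C_{1,\nu}C_{K,p,q}\|u(s)\|_{L^1 \cap L^r}$. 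The time integral $\int_0^t (t-s)^{-1/2} ds = 2\sqrt{t} \le 2\sqrt{T}$ is finite, which produces the factor $\sqrt{T}$ and the constant $C_{\Delta,K,p,q} = dC_\Delta C_{1,\nu}C_{K,p,q}$ (the $d$ coming from the divergence/componentwise sum).

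Next I would establish the contraction estimate. For $u, v \in C^r_T$ write the difference $u(s)(K\ast u(s)) - v(s)(K\ast v(s)) = (u(s)-v(s))(K\ast u(s)) + v(s)(K \ast (u(s)-v(s)))$, apply the same Hölder/semigroup bounds to each term, and obtain
\begin{equation*}
\|\Gamma u - \Gamma v\|_{C^r_T} \le 2 C_{\Delta,K,p,q}\sqrt{T}\,\big(\|u\|_{C^r_T} + \|v\|_{C^r_T}\big)\,\|u - v\|_{C^r_T}.
\end{equation*}
On the closed ball $\overline{B}_R \subset C^r_T$ of radius $R$ centered at $0$, $\Gamma$ is a contraction provided $4 C_{\Delta,K,p,q}\sqrt{T}\,R < 1$, and $\Gamma$ maps $\overline{B}_R$ into itself provided $\|u_0\|_{L^1 \cap L^r} + 2C_{\Delta,K,p,q}\sqrt{T}R^2 \le R$. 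A short analysis of the quadratic inequality $2C_{\Delta,K,p,q}\sqrt{T}R^2 - R + \|u_0\|_{L^1 \cap L^r} \le 0$ shows it has a solution exactly when the discriminant condition \eqref{Lemma contr: Stima1} holds, i.e. $C_{T,u_0} = 1 - 4C_{\Delta,K,p,q}\sqrt{T}\|u_0\|_{L^1\cap L^r} \in [0,1)$, and the smallest admissible radius is $R = \frac{1 - \sqrt{C_{T,u_0}}}{4C_{\Delta,K,p,q}\sqrt{T}}$ — wait, more precisely $R_- = \frac{1 - \sqrt{C_{T,u_0}}}{4C_{\Delta,K,p,q}\sqrt{T}}$, which after simplification gives the bound \eqref{eq:bound_u_PDE}. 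Banach's fixed point theorem on $\overline{B}_{R_-}$ then yields existence and uniqueness of the fixed point in that ball; a standard argument (any mild solution a priori lies in such a ball for $T$ small, or a bootstrap/uniqueness-propagation argument) upgrades this to uniqueness in all of $C^r_T$, and the norm bound \eqref{eq:bound_u_PDE} is inherited from membership in $\overline{B}_{R_-}$.

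The main obstacle — or rather the point requiring the most care — is the handling of the singular convolution $K \ast u$ so that the product $u\,(K\ast u)$ lands in $L^1 \cap L^r$ with a constant that is \emph{uniform in $s$} and depends only on $\|u\|_{C^r_T}$; this is exactly what part (ii) of Proposition \ref{prop:LJ_con_op} delivers, and the constraint $r \ge p'$ with $p < d/(a+1)$ is what makes it applicable (it forces $a < d-1$, matching the hypothesis). A secondary technical point is verifying the time-continuity $t \mapsto (\Gamma u)(t,\cdot)$ into $L^1 \cap L^r$ — continuity of the Duhamel term follows from dominated convergence together with strong continuity of the semigroup and the integrable singularity $(t-s)^{-1/2}$, but this should be stated explicitly rather than skipped. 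Everything else is a routine, if slightly lengthy, contraction-mapping computation; I would present the product and difference estimates as one or two displayed inequalities and refer to the quadratic-inequality analysis for the precise shape of the radius and of \eqref{eq:bound_u_PDE}.
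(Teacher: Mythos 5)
Your proposal is correct and follows essentially the same route as the paper: a Banach fixed-point argument in $C^r_T$ for the Duhamel map, with the product estimated via H\"older and Proposition \ref{prop:LJ_con_op}(ii) (using $r\ge p'$), the semigroup bound \eqref{eq:heat_semigroup_estimate}, and the quadratic inequality determining the invariant ball whose minimal radius yields \eqref{eq:bound_u_PDE}. The only quibble is bookkeeping: keeping the factor $2$ from $\int_0^t(t-s)^{-1/2}\,ds=2\sqrt t$ makes your discriminant $1-8C_{\Delta,K,p,q}\sqrt T\|u_0\|_{L^1\cap L^r}$ and your radius $\frac{1-\sqrt{\cdot}}{4C_{\Delta,K,p,q}\sqrt T}$, which do not literally coincide with \eqref{Lemma contr: Stima1}--\eqref{eq:bound_u_PDE}; the paper's own bilinear estimate \eqref{eq:continuity_bilinear_form} silently absorbs that factor, so this is a harmless constant discrepancy rather than a gap.
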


\begin{proof}
The existence follows from a fixed-point argument via the the Banach–Caccioppoli theorem \cite{Evans_2010}. 
Given $C^{r}_T=C\left([0,T]; L^1 \cap L^{r}(\mathbb{R}^d)\right) $, let us consider  the bilinear form $B : C_T^{r} \times C_T^{r} \to C_T^{r}$, defined by
\begin{align*}
    B(u,v)(t) = \int_0^t \nabla \cdot e^{(t-s)\Delta} \left( u(s,\cdot) \left( K \ast v(s,\cdot) \right) \right) \, ds, \quad t \in [0,T]. 
\end{align*}
One easily prove the continuity of $B$, that is \begin{equation} \label{eq:continuity_bilinear_form}
    \| B(u,v)(t) \|_{L^1 \cap L^{r}(\mathbb{R}^d)} \le C_{\Delta,K,p,q} \sqrt{t} \, \|u\|_{C^{r}_T} \|v\|_{C^{r}_T}.
\end{equation}

Indeed, from \eqref{eq:heat_semigroup_estimate},  H\"older's inequality and Proposition \ref{prop:LJ_con_op}, we have

\begin{align*}
    \|B(u,v)(t)\|_{L^1 \cap L^r(\R^d)} &\le \int_0^t \|\nabla \cdot e^{(t-s)\Delta}\left(u(s,\cdot)(K \ast v(s,\cdot)\right) \|_{L^1 \cap L^r(\R^d)} \, ds \\ & \le d  \, C_\Delta  \int_0^t\frac{1}{\sqrt{t-s}} \|\left(u(s,\cdot)(K \ast v(s,\cdot)\right) \|_{L^1 \cap L^r(\R^d)} \, ds \\ & \le  d  \, C_\Delta   \int_0^t\frac{1}{\sqrt{t-s}} \|u(s,\cdot) \|_{L^1 \cap L^r(\R^d)} \|K \ast v(s,\cdot)\|_{L^\infty(\R^d)} \, ds \\ & \le  d  \, C_\Delta C_{1,\nu}  C_{K,p,q} \int_0^t\frac{1}{\sqrt{t-s}} \|u(s,\cdot) \|_{L^1 \cap L^r(\R^d)} \| v(s,\cdot)\|_{L^1 \cap L^r(\R^d)} \, ds.
\end{align*}

Then, by considering the full trajectory we get the \eqref{eq:continuity_bilinear_form}. 

Let us consider for any  \begin{equation}\label{Inter:rho}
 \rho \in \left[ \frac{1 - C_{T,u_0}^{1/2}}{2 C_{\Delta,K,p,q} \sqrt{T}}, \frac{1}{2 C_{\Delta,K,p,q} \sqrt{T}} \right[, 
 \end{equation}
  the operator $F: \mathcal{B}(\rho) \to \mathcal{B}(\rho)$,  defined on the closed ball  \begin{equation*}
    \mathcal{B}(\rho) = \{ u \in C^{r}_T : \|u\|_{C^{r}_T} \le \rho \},  
\end{equation*}
  for any $ u \in \mathcal{B}(\rho)$,  as $F(u)=z$ with, for any $t\in [0,T]$, $
    z(t,\cdot) = F(u)(t)=e^{t \Delta} u_0-B(u,u)(t). $

 For $u \in C^{r}_T$, we estimate
\begin{align*}
    \|F(u)(t)\|_{L^1 \cap L^{r}(\mathbb{R}^d)} &\le \|e^{t \Delta} u_0\|_{L^1 \cap L^{r}(\mathbb{R}^d)} + \|B(u,u)(t)\|_{L^1 \cap L^{r}(\mathbb{R}^d)} \\
    &\le \|g_{2t}\|_{L^1(\mathbb{R}^d)} \|u_0\|_{L^1 \cap L^{r}(\mathbb{R}^d)} + C_{\Delta,K,p,q} \sqrt{t} \|u\|_{C^{r}_T}^2 \\
    &\le \|u_0\|_{L^1 \cap L^{r}(\mathbb{R}^d)} + C_{\Delta,K,p,q} \sqrt{T} \, \rho^2.
\end{align*}
 From \eqref{Lemma contr: Stima1} and \eqref{Inter:rho}, we have that 
$
    \|u_0\|_{L^1 \cap L^{r}(\mathbb{R}^d)} + C_{\Delta,K,p,q} \sqrt{T} \, \rho^2 \le \rho,
$
that is, $F$ maps $\mathcal{B}(\rho)$ into itself. 
Furthermore,  for $u, v \in \mathcal{B}(\rho)$ with $\rho $ satisfying  condition \eqref{Inter:rho}, and for  $t \in [0,T]$ we  get, uniformly in $t\in[0,T]$
\begin{align*}
    \|F(u)(t) - F(v)(t)\|_{L^1 \cap L^{r}(\mathbb{R}^d)} &= \|B(u,u)(t) - B(v,v)(t)\|_{L^1 \cap L^{r}(\mathbb{R}^d)} \\
    &= \|B(u - v, v)(t) + B(u, u - v)(t)\|_{L^1 \cap L^{r}(\mathbb{R}^d)} \\
    &\le C_{\Delta,K,p,q} \sqrt{T} \left( \|u\|_{C^{r}_T} + \|v\|_{C^{r}_T} \right) \|u - v\|_{C^{r}_T}<\|u - v\|_{C^{r}_T},
\end{align*}
so  that $F$ is a contraction. So existence of $u\in C_T^r$, with $\|u\|_{C_T^r}< 1/\left(2C_{\Delta,K,p,q} \sqrt{T}
\right)$.
Uniqueness follows by  standard argument. Indeed, let $u$ and $v$ two different  mild solutions to \eqref{PDE: FP1}. In a similar way as above, we obtain a contradiction; indeed,$$\|u- v\|_{C_T^r}\le C_{\Delta,K,p,q}\|u-v\|_{C_T^r}\left(\|u\|_{C_T^r}+\|v\|_{C_T^r}\right)\sqrt{T} <\|u-v\|_{C_T^r}.$$

Given the uniqueness \eqref{eq:bound_u_PDE} follows. 

\end{proof}

 The regularity of the solution $u$ may be also improved.

\begin{proposition}\label{Estimate:u}
Under the same hypothesis of Theorem \ref{Teo: EUMild}, if the initial condition  $u_0\in L^1\cap H^{\beta, r}(\R^{d})$, $0\le \beta<1$ and $T>0$ is such that relation \eqref{Lemma contr: Stima1} is satisfied. Then, the unique solution of the  PDE \eqref{PDE: FP1} verifies
\begin{equation*}
\sup_{t\in[0,T]} \left\| u(t,\cdot)\right\|_{H^{\beta,r}(\R^d)}<+\infty.
\end{equation*}
Hence, $u\in C([0,T];L^1 \cap H^{\beta,r}(\R^d))$. 
\end{proposition}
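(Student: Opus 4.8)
The plan is to promote the mild solution $u\in C^r_T$ produced by Theorem \ref{Teo: EUMild} to the class $C([0,T];L^1\cap H^{\beta,r}(\R^d))$ by a single bootstrap step, namely by applying the fractional operator $(\mathrm{I}-\Delta)^{\beta/2}$ of \eqref{eq:fractional_heat_operator} to the Duhamel identity \eqref{eq: mild solution}. One may assume $\beta\in(0,1)$, the case $\beta=0$ being Theorem \ref{Teo: EUMild} itself. Since $(\mathrm{I}-\Delta)^{\beta/2}$, $e^{t\Delta}$ and $\nabla$ are all Fourier multipliers they commute, and I would first record that, for each $t\in[0,T]$,
\[
(\mathrm{I}-\Delta)^{\beta/2} u(t,\cdot)=e^{t\Delta}(\mathrm{I}-\Delta)^{\beta/2}u_0-\int_0^t (\mathrm{I}-\Delta)^{\beta/2}\nabla\cdot e^{(t-s)\Delta}\big(u(s,\cdot)\,(K\ast u(s,\cdot))\big)\,ds,
\]
the term-by-term manipulation being justified (a posteriori, by the bounds below) by the smoothing of $e^{(t-s)\Delta}$ and by the fact that $u(s,\cdot)(K\ast u(s,\cdot))\in L^1\cap L^r(\R^d)$ for every $s$, which follows from $u\in C^r_T$, from $r\ge p^{\prime}$, and from $K\ast u(s,\cdot)\in L^\infty(\R^d)$ given by Proposition \ref{prop:LJ_con_op} ii).

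For the linear term I would use that $e^{t\Delta}$ is an $L^r$-contraction (convolution with a probability density), so that $\|e^{t\Delta}(\mathrm{I}-\Delta)^{\beta/2}u_0\|_{L^r(\R^d)}\le\|u_0\|_{H^{\beta,r}(\R^d)}$ uniformly in $t$. For the Duhamel term the key is the splitting $(\mathrm{I}-\Delta)^{\beta/2}\nabla\cdot e^{(t-s)\Delta}=\big[(\mathrm{I}-\Delta)^{\beta/2}e^{(t-s)\Delta/2}\big]\circ\big[\nabla\cdot e^{(t-s)\Delta/2}\big]$, whose two factors are bounded on $L^r(\R^d)$ by $C(t-s)^{-\beta/2}$ and $C(t-s)^{-1/2}$ thanks to \eqref{In:BesselHeat} and \eqref{eq:heat_semigroup_estimate}; together with the pointwise estimate $\|u(s,\cdot)(K\ast u(s,\cdot))\|_{L^r(\R^d)}\le\|u(s,\cdot)\|_{L^r(\R^d)}\|K\ast u(s,\cdot)\|_{L^\infty(\R^d)}\le C\|u(s,\cdot)\|_{L^1\cap L^r(\R^d)}^2$ (Proposition \ref{prop:LJ_con_op} ii) together with the embedding $L^1\cap L^r(\R^d)\hookrightarrow L^1\cap L^{p^{\prime}}(\R^d)$, valid since $r\ge p^{\prime}$), this yields
\[
\|u(t,\cdot)\|_{H^{\beta,r}(\R^d)}\le\|u_0\|_{H^{\beta,r}(\R^d)}+C\,\|u\|_{C^r_T}^2\int_0^t (t-s)^{-\frac{1+\beta}{2}}\,ds .
\]
Since $\beta<1$ the exponent $(1+\beta)/2$ is strictly less than $1$, so the time integral equals $\tfrac{2}{1-\beta}t^{(1-\beta)/2}\le\tfrac{2}{1-\beta}T^{(1-\beta)/2}$; inserting the a priori bound \eqref{eq:bound_u_PDE} for $\|u\|_{C^r_T}$ gives $\sup_{t\in[0,T]}\|u(t,\cdot)\|_{H^{\beta,r}(\R^d)}<+\infty$, which with $u\in C^r_T$ is the first assertion.

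To upgrade this uniform bound to continuity in $H^{\beta,r}$ I would estimate $u(t_2,\cdot)-u(t_1,\cdot)$ for $0\le t_1<t_2\le T$ directly from the representation above: the difference of the linear terms tends to $0$ as $t_2-t_1\to0$ by the continuity properties of the heat semigroup, and the difference of the Duhamel terms splits into an integral over $[t_1,t_2]$, bounded by $C\|u\|_{C^r_T}^2(t_2-t_1)^{(1-\beta)/2}$ by the computation just made, and an integral over $[0,t_1]$ of the expression $\big[(\mathrm{I}-\Delta)^{\beta/2}\nabla\cdot e^{(t_2-s)\Delta}-(\mathrm{I}-\Delta)^{\beta/2}\nabla\cdot e^{(t_1-s)\Delta}\big]\big(u(s,\cdot)(K\ast u(s,\cdot))\big)$, which tends to $0$ by dominated convergence, using strong continuity of $\tau\mapsto e^{\tau\Delta}$ off $\tau=0$ and the integrable domination obtained above. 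The $L^1$-component of the norm is continuous by Theorem \ref{Teo: EUMild}, so $u\in C([0,T];L^1\cap H^{\beta,r}(\R^d))$.

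The main obstacle is simply the sharp bookkeeping of the temporal singularity: one loses one derivative from $\nabla$ and $\beta$ derivatives from $(\mathrm{I}-\Delta)^{\beta/2}$, producing a factor $(t-s)^{-(1+\beta)/2}$ whose local integrability in $s$ is exactly equivalent to the standing assumption $\beta<1$; once this is in place everything reduces to the convolution estimates of Section \ref{Sec:LJ pot} and the a priori bound of Theorem \ref{Teo: EUMild}. A minor alternative to the semigroup splitting, if one prefers to avoid the restriction $\beta>0$ in \eqref{In:BesselHeat}, is to bound $\|(\mathrm{I}-\Delta)^{\beta/2}\nabla\cdot e^{\tau\Delta}\|_{L^r\to L^r}$ directly by the Mikhlin multiplier theorem applied to the symbol $(1+|\xi|^2)^{\beta/2}(i\xi)\,e^{-\tau|\xi|^2}$, which gives the same $\tau^{-(1+\beta)/2}$ decay.
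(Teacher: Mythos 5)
Your argument is correct, and it differs from the paper's proof in the key quantitative step. Both proofs start from the Duhamel formula \eqref{eq: mild solution}, apply the $H^{\beta,r}$ norm and exploit the smoothing estimates \eqref{eq:heat_semigroup_estimate} and \eqref{In:BesselHeat}; the difference is how the nonlinear term is handled. The paper keeps the unknown quantity $\|u(s,\cdot)\|_{H^{\beta,r}(\R^d)}$ under the time integral, so it must close the estimate with the singular Gr\"onwall lemma of Henry, producing a Mittag--Leffler bound; this chain implicitly relies on controlling the product $u(s,\cdot)\,(K\ast u(s,\cdot))$ in the fractional norm $H^{\beta,r}$ by $\|u(s,\cdot)\|_{H^{\beta,r}}\|K\ast u(s,\cdot)\|_{L^\infty}$, a product estimate that is not automatic for $\beta>0$. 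You instead shift the entire loss of $\beta$ derivatives onto the semigroup via the factorization $(\mathrm{I}-\Delta)^{\beta/2}\nabla\cdot e^{(t-s)\Delta}=\bigl[(\mathrm{I}-\Delta)^{\beta/2}e^{(t-s)\Delta/2}\bigr]\circ\bigl[\nabla\cdot e^{(t-s)\Delta/2}\bigr]$, so the nonlinearity only needs to be measured in $L^r$, where $\|u(s,\cdot)(K\ast u(s,\cdot))\|_{L^r}\le C\|u\|_{C^r_T}^2$ follows from Proposition \ref{prop:LJ_con_op} ii) and $r\ge p^{\prime}$; since $(1+\beta)/2<1$ the time integral is explicit and no Gr\"onwall argument is needed, and the a priori bound \eqref{eq:bound_u_PDE} closes the estimate. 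What each approach buys: the paper's version yields a bound proportional to $\|u_0\|_{H^{\beta,r}}$ (linear in the initial Bessel norm, at the price of the Mittag--Leffler growth and the hidden product estimate), while yours gives a cruder but fully elementary bound $\|u_0\|_{H^{\beta,r}}+C T^{(1-\beta)/2}\|u\|_{C^r_T}^2$ that avoids the delicate multiplication step altogether. You also actually prove the continuity in $t$ of $u$ with values in $H^{\beta,r}$, which the paper asserts without argument, and your treatment of $\beta=0$ as a trivial case is consistent with the statement's range $0\le\beta<1$.
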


\begin{proof}
Let $u_0\in L^1 \cap H^{\beta,r}(\R^d)$, by an embedding argument $u_0\in L^1 \cap L^r(\R^d)$. By  Theorem \ref{Teo: EUMild}, we have that $u\in C([0,T];L^1 \cap L^r(\R^d))$. By \eqref{In:BesselHeat} and \eqref{eq:Ckd_K*f}, we obtain 

\begin{align*}
    \|u(t,\cdot)\|_{H^{\beta,r}(\R^d)} &\le \|e^{t\Delta}u_0\|_{H^{\beta,r}(\R^d)}+\int_{0}^t \|\nabla \cdot e^{(t-s)\Delta}(u(s,\cdot) K\ast u(s,\cdot))\|_{H^{\beta,r}(\R^d)} \ ds \\ 
    & = \|e^{t\Delta}u_0\|_{H^{\beta,r}(\R^d)}+\int_{0}^t \| \left(\mathrm{I}-\Delta \right)^{\frac{\beta}{2}}  \nabla \cdot e^{(t-s)\Delta}(u(s,\cdot) K\ast u(s,\cdot))\|_{L^r(\R^d)} \ ds \\ 
    & \le  \|u_0\|_{H^{\beta,r}(\R^d)}+C_\Delta^\prime C_{1,\nu }C_{K,p,q} \|  u\|_{C_T^r}\int_{0}^t \frac{1}{(t-s)^{(1+\beta)/2}} \| u(s,\cdot)\|_{H^{\beta,r}(\R^d)} \ ds.
\end{align*}

The result follows by applying Grönwall's lemma for singular integrals \cite[Lemma 7.1.1]{Henry_1981}
 $$\|u\|_{H^{\beta,r}(\R^d)}\le \|u_0\|_{H^{\beta,r}(\R^d)}E_{\frac{1-\beta}{2},1}\left(t \, \gamma\right),$$ with $C^*=C_\Delta^\prime C_{1,\nu }C_{K,p,q} \|  u\|_{C_T^r}$, $\gamma=\left(C^*\Gamma\left(\frac{1-\beta}{2}\right)\right)^{2/(1-\beta)}$, and \begin{equation}\label{eq:Mittang-Leffler}
     E_{a,b}(z)=\sum_{n=0}^{+\infty}\frac{z^n}{\Gamma\left(an+b\right)},
\end{equation}
      is the Mittang-Leffler function. 
 In conclusion, 

 $$\sup_{t \in [0,T]}\|u\|_{H^{\beta,r}(\R^d)}\le \|u_0\|_{H^{\beta,r}(\R^d)}E_{\frac{1-\beta}{2},1}\left(T \, \gamma\right).$$

\end{proof}

\begin{remark}
   Furthermore, under the conditions of Proposition \ref{Prop:HoderSpaceSubCri}, and Proposition \ref{Prop:HoderSpaceSuperCri} the PDE \eqref{PDE: FP1} also preserves the sign (see, for example, \cite{Nagai_2011}).   
\end{remark}

\emph{A dynamics at the microscale.}
Let us consider now the following stochastic differential equation (SDE) 
 \begin{equation}\label{eq:SDE_MKV_1} 
\begin{cases}
        dX_t = K\ast u(t,X_t) dt+\sqrt{2}dW_t, \quad 0< t\le T;\\
         \mathcal{L}(X_0) =u_0(\cdot)  dx,
        \end{cases}
\end{equation}
where   $W=(W_t)_{t\in[0,T]}$ is a Brownian motion. We prove that it is a McKean-Vlasov stochastic differential equation (MKV-SDE) associated with the Fokker-Planck PDE \eqref{PDE: FP1}, that is   for any $t\in (0,T]$, $u(t,\cdot)$ is the marginal density of $X$, hence  \begin{equation}\label{eq:SDE_MKV_2}
        \mathcal{L}(X_t)=u(t,\cdot)  dx, 
\end{equation}
and $u$ is the mild solution of order $r$ to the Fokker-Planck PDE \eqref{PDE: FP1}. 
\begin{definition}[Solution of  the nonlinear 
martingale problem]\label{DEF: MP}
Let  $K$  be the Lennard-Jones force  \eqref{eq:JL_potential_force}. Let $T>0$ and $u_0 \in L^1 \cap L^{r}(\mathbb{R}^d)$ satisfying the condition \eqref{Lemma contr: Stima1}.  Let $\mathbb{Q}$ be a probability measure on the canonical space $C([0,T];\mathbb{R}^d)$ equipped with $\sigma(C[0, T];\mathbb{R}^d)$, and   $\mathbb{Q}(t,\cdot)$ its one-dimensional time marginals, for any $t\in [0,T]$. We say that $\mathbb{Q}$ \emph{is a solution of  the nonlinear 
martingale problem} if the following conditions are satisfied
\begin{enumerate}[label=\roman*)]
\item $\mathbb{Q}_0 = q(0,\cdot)dx=u_0 dx$;
\item For any $t\in (0,T]$, $\mathbb Q_t$ has a density $q(t,\cdot)$ w.r.t. Lebesgue measure on $\mathbb{R}^d$. In addition, it satisfies $q\in C\left([0,T];L^1 \cap L^r(\R^d)\right)$;
\item For any $f \in C_{c}^2(\mathbb{R}^d)$, the process $(M_t)_{t \in[0, T]}$ 
defined as
\begin{equation}\label{MartingalProblem}
    M_t:=f(V_t)-f(V_0)-\int_0^t \left [ \Delta f(V_s)+  \nabla f(V_s) \cdot (K \ast q(s,V_s))\right] \ ds.
\end{equation}
is a $\mathbb Q$-martingale, where $(V_t)_{t \in[0, T]}$ denotes the canonical process associated to  $\mathbb{Q}$.  
\end{enumerate}
\end{definition}
\begin{proposition}\label{prop:existence_MKV_SDE}
    Let $T>0$ and $u_0 \in L^1 \cap L^{r}(\mathbb{R}^d)$, with $r$ as in Theorem \ref{Teo: EUMild}, satisfying the condition \eqref{Lemma contr: Stima1}. 
Then, the MKV-SDE \eqref{eq:SDE_MKV_1}-\eqref{eq:SDE_MKV_2} admits a  weak solution $X_t$, unique in law. The law has a density $u\in C\left([0,T];L^1 \cap L^r(\R^d)\right).$
\end{proposition}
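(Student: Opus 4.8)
The plan is to construct the weak solution by a Girsanov change of measure, using as drift the convolution $K\ast u$ built from the unique mild solution $u$ of the Fokker--Planck equation \eqref{PDE: FP1} given by Theorem~\ref{Teo: EUMild}, and then to identify the time marginals of the constructed process with $u(t,\cdot)\,dx$ by a linear uniqueness argument. First I would observe that the drift is bounded: since $r\ge p'$, interpolation between $L^1$ and $L^r$ gives $u(t,\cdot)\in L^1\cap L^{p'}(\mathbb R^d)$ for every $t$, so Proposition~\ref{prop:LJ_con_op}~ii) together with the a priori bound \eqref{eq:bound_u_PDE} yields $b(t,x):=K\ast u(t,x)\in L^\infty([0,T]\times\mathbb R^d)$, with $\|b\|_\infty\le C_b$. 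Thus, once $u$ is frozen, \eqref{eq:SDE_MKV_1} is an SDE with bounded measurable drift and constant nondegenerate diffusion. On a filtered space carrying a standard Brownian motion $W$ and an independent $X_0\sim u_0\,dx$, set $Y_t:=X_0+\sqrt2\,W_t$ and
\[
Z_t:=\exp\!\Big(\tfrac1{\sqrt2}\int_0^t b(s,Y_s)\cdot dW_s-\tfrac14\int_0^t|b(s,Y_s)|^2\,ds\Big).
\]
Boundedness of $b$ makes Novikov's condition trivial, so $(Z_t)$ is a true martingale, $d\mathbb Q:=Z_T\,d\mathbb P$ is a probability measure equivalent to $\mathbb P$, and by Girsanov's theorem $\widehat W_t:=W_t-\tfrac1{\sqrt2}\int_0^t b(s,Y_s)\,ds$ is a $\mathbb Q$-Brownian motion while $Y$ solves $dY_t=b(t,Y_t)\,dt+\sqrt2\,d\widehat W_t$ with $\mathcal L_{\mathbb Q}(Y_0)=u_0\,dx$.

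Next I would identify the marginals. By Itô's formula, for every $f\in C_c^2(\mathbb R^d)$ the process in \eqref{MartingalProblem} with $q$ replaced by $u$ is a $\mathbb Q$-martingale; in particular $\mathbb Q$ is a solution of the nonlinear martingale problem of Definition~\ref{DEF: MP} provided its marginal densities coincide with $u$. Taking $\mathbb Q$-expectations in that martingale relation shows that the family $\mathbb Q_t:=\mathcal L_{\mathbb Q}(Y_t)$ is a measure-valued distributional solution of the \emph{linear} Fokker--Planck equation $\partial_t\mu_t=\Delta\mu_t-\nabla\!\cdot(b\,\mu_t)$ with $\mu_0=u_0\,dx$. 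On the other hand, $K\ast u=b$, and testing the mild formulation \eqref{eq: mild solution} against $C_c^\infty$ functions — legitimate because $u\in C_T^r$ and $b\in L^\infty$ give $u\,(K\ast u)\in C([0,T];L^1\cap L^r)$, by the estimates of Section~\ref{Sec:PDE} — shows that $u(t,\cdot)\,dx$ solves the \emph{same} linear equation in the distributional sense. Uniqueness of (sub)probability-measure solutions of the linear Fokker--Planck--Kolmogorov equation with bounded measurable drift and uniformly elliptic constant second-order part then forces $\mathbb Q_t=u(t,\cdot)\,dx$ for all $t\in[0,T]$; hence $(Y,\widehat W,\mathbb Q)$ is a weak solution of the full McKean--Vlasov system \eqref{eq:SDE_MKV_1}--\eqref{eq:SDE_MKV_2}, with marginal densities $u\in C([0,T];L^1\cap L^r(\mathbb R^d))$ by Theorem~\ref{Teo: EUMild}.

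For uniqueness in law, let $X$ be any weak solution of \eqref{eq:SDE_MKV_1}--\eqref{eq:SDE_MKV_2}. By definition its one-dimensional marginals have densities $v\in C_T^r$, and applying Itô's formula together with the mild representation via the heat semigroup shows that $v$ is a mild solution of order $r$ of the \emph{nonlinear} equation \eqref{PDE: FP1}; the uniqueness part of Theorem~\ref{Teo: EUMild} gives $v=u$. Consequently every weak solution of the MKV-SDE is in fact a weak solution of the linear SDE with the fixed bounded measurable drift $b=K\ast u$ and initial law $u_0\,dx$. The martingale problem for such a drift and constant nondegenerate diffusion is well posed: running the Girsanov computation above in reverse, the law on $C([0,T];\mathbb R^d)$ of any such solution is absolutely continuous with respect to the law of $X_0+\sqrt2\,W$ with an explicit density of the form $Z_T$, which depends only on $b$; hence the law of $X$ is uniquely determined. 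This proves uniqueness in law, and the density statement follows from Step~3 together with Proposition~\ref{Estimate:u} if additional Bessel regularity of $u_0$ is assumed.

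The main obstacle is the identification step: translating the probabilistic marginal flow $(\mathbb Q_t)$ into the analytic mild solution $u$, i.e.\ the uniqueness of the \emph{linear} Fokker--Planck equation within the class of probability-measure solutions with merely bounded measurable drift. I would handle it either by duality with the backward Kolmogorov equation $\partial_t\varphi+\Delta\varphi+b\cdot\nabla\varphi=0$, $\varphi(T,\cdot)\in C_c^\infty$, which is well posed for bounded measurable $b$ thanks to the smoothing estimates already recorded in Propositions~\ref{HeatSemi_Ineq_Lp}--\ref{Es:HeatSemigroup_Bessel}, or by invoking a known well-posedness theorem for Fokker--Planck--Kolmogorov equations; a subsidiary technical point is the verification that the mild solution $u$ is genuinely a distributional solution, which is exactly where the integrability of the nonlinear term $u\,(K\ast u)$ and the $L^p$/Bessel bounds of Section~\ref{Sec:PDE} enter.
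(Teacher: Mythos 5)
Your proposal is correct and follows essentially the same route as the paper: freeze the drift $b=K\ast u$ with $u$ the mild solution of Theorem~\ref{Teo: EUMild}, use the bound from Proposition~\ref{prop:LJ_con_op}~ii) and \eqref{eq:bound_u_PDE} to apply Girsanov/Novikov, identify the marginals of the resulting process with $u(t,\cdot)\,dx$ through the (linear) Fokker--Planck equation, and obtain uniqueness in law by reducing any solution to the linear SDE with drift $b$ and undoing the Girsanov transform. The only difference is that you justify the identification step by uniqueness for the linear Fokker--Planck--Kolmogorov equation among measure-valued solutions (via duality with the backward equation), whereas the paper appeals directly to the uniqueness statement of Theorem~\ref{Teo: EUMild}; your version makes that step more explicit, and the closing remark about Proposition~\ref{Estimate:u} is unnecessary since the density regularity already comes from Theorem~\ref{Teo: EUMild}.
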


\begin{proof}
The thesis follows by standard arguments  \cite{2018_Carmona_delarue} given the estimates in Proposition \ref{prop:LJ_con_op}. Anyway we give a sketch of the proof. 
 
 Fix $T>0$. Let $u \in C^r_T= C\left([0,T]; L^1 \cap L^r(\R^d)\right)$ be the unique mild-solution of the PDE \eqref{PDE: FP1}. 
 Define the deterministic drift $ b(t,x)=K*u(t,x)$, for any $(t,x)\in [0,T]\times \mathbb R^d.$
 From Proposition \ref{prop:LJ_con_op}(case ii), and \eqref{eq:bound_u_PDE} we get the uniformly boundedness of  drift $b$, i.e.  \begin{equation}\label{eq:uniform_bound_b}
 \sup_{t\in[0,T]} \|b(t,\cdot)\|_{L^\infty(\R^d)}< \displaystyle\frac{C_{1,\nu}C_{K,p,q}}{2C_{\Delta,K,p,q}\sqrt{T}}=\frac{1}{2 d C_{\Delta} \sqrt{T}}=C_T<+\infty.\end{equation}
 This is a key element for the application of an argument based on the Girsanov's theorem.
 Let us take canonical Wiener measure with initial law $u_0$ or, equivalently, let us consider the pure diffusive SDE
 $$
 dY_t=\sqrt{2}dW_t, \quad \mathcal{L}_{\mathbb P}(Y_0)=u_0.$$
We know that $\mathcal{L}_{\mathbb P}(Y_t)$ admits a density $p(t,\cdot)$ under $\mathbb{P}$ (convolution of $u_0$ with the heat kernel $g_{2t}$); say it $\mathbb P_{Y_t}$.
The exponential local martingale $$Z_t=\exp\left( \int_0^t b(s,Y_s) dW_s -\frac{1}{2} \int_0^t \left| b(s,Y_s)\right|^2 ds\right),$$ is a true martingale since, for the uniform bound  \eqref{eq:uniform_bound_b}, trivially satisfies the Novikov's condition
$$
\mathbb E\left[ \exp\left(\frac12 \int_0^T \left|b(s,Y_s)\right|^2 \, ds\right)   \right]\le \exp\left(\frac12 C_T^2 T \right) <+ \infty.
$$ Then, a new  probability measure on the underlying filtered probability space  may be defined  such that for $t\in [0,T]$, $$Z_t=\frac{d\mathbb Q}{d\mathbb P}_{|_{\mathcal{F}_t}},$$
and $\mathbb E_{\mathbb P}\left[ Z_t\right]=1$. Thus,  since the dynamics of  $Y_t$ under the measure $\mathbb{Q}$ is 
\begin{equation}\label{SDE:Girsanov}
dY_t =b(t,Y_t) dt+ \sqrt{2}\,d\widetilde{W}_t, \qquad \quad  \widetilde{W}_t= W_t-\frac{1}{\sqrt{2}} \int_0^t b(s,Y_s) ds,
\end{equation}
where, for the Girsanov's theorem, $\widetilde{W}=\left(\widetilde{W}_t \right)_{t\in [0,T]}$ is a Brownian motion under $\mathbb Q$. Hence, $(Y,\widetilde{W})$ is a   weak solution of the SDE  \eqref{SDE:Girsanov} under $\mathbb{Q}$.
Furthermore, using simple calculations derived by the change of measure, for any $t\in [0,T]$ and any $A\in \mathcal{F} $, given $\mathbb Q_{t}=\mathcal{L}_{\mathbb Q}(Y_t)$, we have 
$$
\mathbb Q_{t}(A) =\mathbb E_{\mathbb Q}\left[ 1_A(Y_t) \right] =  \mathbb E_{\mathbb P}\left[ Z_t 1_A(Y_t) \right] = 
 \mathbb E_{\mathbb P}\left[ \mathbb E_{\mathbb P}\left[Z_t | Y_t \right] 1_A(Y_t)\right] = \int_A  \mathbb E_{\mathbb P}\left[Z_t | Y_t=x \right] p(t,x) dx, $$
that is $\mathbb Q_{t}$ has a density $q(t,\cdot)$ such that
$
q(t,x)= \mathbb E_{\mathbb P}\left[Z_t | Y_t=x \right] p(t,x).$
For any $f\in C_c^2(\mathbb{R}^d)$, by applying the It\^o's formula under $\mathbb Q$ to $f(Y_t)$
\begin{equation*}
    \widetilde{M}_t:=f(Y_t)-f(Y_0)-\int_0^t \left [ \Delta f(Y_s)+  \nabla f(Y_s) \cdot  b(s,Y_s)\right] \ ds=\sqrt{2}\int_0^t \nabla  f(Y_s) d\widetilde{W}_t,
\end{equation*} 
is a $\mathbb Q$ - martingale. Taking expectations along $\widetilde{M}_t$  
 and integrating by part, it turns out that  $q(t,x)$ is a weak solution  of the linear PDE \eqref{PDE: FP1}, and by the uniqueness of its solution proven in Theorem \eqref{Teo: EUMild}, we get $q\equiv u$, i.e. $\mathbb {Q}_{t}=\mathcal{L}(Y_t)=u(t,\cdot) dx$;  with $u\in C\left([0,T];L^1 \cap L^r(\R^d)\right)$. Hence,  while $\mathbb Q$ is a solution of  the nonlinear martingale problem as in Definition \ref{DEF: MP}, the process $Y$ is  a solution of the nonlinear  MKV-SDE \eqref{eq:SDE_MKV_1}-\eqref{eq:SDE_MKV_2} \cite{Karatzas}. 
 
 The weak solution is unique in law, that is the measure $\mathbb Q$ solution of the nonlinear martingale problem is unique. Indeed, if  $\mathbb Q_1,$ and $ \mathbb Q_2 $  are measure with $t$-marginals densities $q_1(t,\cdot)$ and $q_2(t,\cdot)$, respectively, such that solves the martingale problem \eqref{MartingalProblem}. Then, as solutions of the PDE we have $q_1(t,\cdot)=u(t,\cdot)= q_2(t,\cdot)$, then both $\mathbb Q_1$ and $\mathbb Q_2$ share the same drift.  Hence, by applying the reverse Girsanov  transform, the new changed measures are  Wiener  measure  with initial distribution $u_0$, which are unique. Hence, necessarily, $\mathbb Q_1=\mathbb Q_2$. 
\end{proof}

At the microscopic level we have considered the dynamics of a typical particle described by the MKV-SDE \eqref{eq:SDE_MKV_1}-\eqref{eq:SDE_MKV_2} with a Lennard-Jones force \eqref{eq:JL_potential_force} with  $ a>b>0$. With a singularity at the origin of order $a < d-1$, we have proven a unique in law solution, with a probability density being the solution of the system at the macroscopic scale, described by the PDE \eqref{PDE: FP1}.

We may improve the result by considering a lower order of the singularity.

\begin{proposition}\label{Prop:MK_strong_ex_un}
 Let $(p,q)$ the exponents given in Proposition \ref{Prop:integrability_pq}, and  $(p^{\prime},q^{\prime})$ their conjugated exponents. Assume  $u_0\in L^1 \cap  H^{\beta, r}(\mathbb{R}^d)$ with $r\ge p^{\prime}$.  Furthermore, suppose that  one of the following constraints is satisfied:
\begin{itemize}
    \item[$H_1$.] let $d\ge3$ fixed, $d-2>a$, $r>d$, and $\beta=0$; 
    \item[$H_2$.] let $d\ge3$ fixed, $d-2=a$, $r>a+2$,  and $\beta \in (0,1)$ such that $\beta -d/r\in (0,1)$;
    \item[$H_3$.] let $d\ge2$ fixed, $a\in (d-2,d-1)$,  $r>d/(d-1-a)$,   and $\beta \in (0,1)$ such that $\beta -d/r\in (2-d+a,1)$. 
\end{itemize}
  Then, the MKV-SDE \eqref{eq:SDE_MKV_1}-\eqref{eq:SDE_MKV_2} admits a pathwise uniqueness. That is, it admits a strong solution. 
    
\end{proposition}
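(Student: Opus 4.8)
The strategy is to reduce the (McKean--Vlasov) pathwise uniqueness to a classical Grönwall argument for a frozen-drift SDE, and then invoke Yamada--Watanabe. By Proposition \ref{prop:existence_MKV_SDE} the MKV-SDE \eqref{eq:SDE_MKV_1}--\eqref{eq:SDE_MKV_2} already admits a weak solution that is unique in law, whose time-marginals are $u(t,\cdot)\,dx$ with $u$ the unique mild solution of order $r$ of \eqref{PDE: FP1}. Hence any solution of \eqref{eq:SDE_MKV_1}--\eqref{eq:SDE_MKV_2} has the \emph{same} marginal flow $u$, so that the McKean--Vlasov drift $K\ast u(t,x)$ is a single, fixed deterministic vector field $b(t,x)$. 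Consequently pathwise uniqueness for \eqref{eq:SDE_MKV_1}--\eqref{eq:SDE_MKV_2} is equivalent to pathwise uniqueness for the decoupled It\^o SDE $dX_t=b(t,X_t)\,dt+\sqrt2\,dW_t$, $\mathcal L(X_0)=u_0\,dx$. Once pathwise uniqueness is proved, combining it with the weak existence of Proposition \ref{prop:existence_MKV_SDE} and applying the Yamada--Watanabe theorem yields existence of a (unique) strong solution.

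The heart of the argument is to show that, under each of $H_1$, $H_2$, $H_3$, the frozen drift $b(t,\cdot)=K\ast u(t,\cdot)$ is bounded and globally Lipschitz in space, uniformly for $t\in[0,T]$. Uniform boundedness is already recorded in \eqref{eq:uniform_bound_b}. For the Lipschitz bound one combines the regularity of $u$ with the continuity estimates of Section \ref{Sec:LJ pot}. Under $H_1$ (with $d\ge 3$, $a<d-2$, $\beta=0$, $r>d$ and $r\ge p^{\prime}$), Theorem \ref{Teo: EUMild} gives $u\in C([0,T];L^1\cap L^{r}(\R^d))$ and the second part of Proposition \ref{Prop:HoderSpaceSubCri}~a.\ yields $|K\ast u(t,\cdot)|_{1}\le C\,\|u(t,\cdot)\|_{L^1\cap L^{r}(\R^d)}$. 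Under $H_2$ (with $a=d-2$ and $\beta-d/r\in(0,1)$, which in particular forces $r>d$), Proposition \ref{Estimate:u} gives $u\in C([0,T];L^1\cap H^{\beta,r}(\R^d))$ and Proposition \ref{Prop:Grad_K2_Bessel} with $r_1=q=+\infty$ gives $|K\ast u(t,\cdot)|_{1}\le C\,\|u(t,\cdot)\|_{L^1\cap H^{\beta,r}(\R^d)}$. Under $H_3$ (with $a\in(d-2,d-1)$ and $\beta-d/r\in(2-d+a,1)$), Proposition \ref{Estimate:u} again gives $u\in C([0,T];L^1\cap H^{\beta,r}(\R^d))$, and Proposition \ref{Prop:HoderSpaceSuperCri} with the choice $q=+\infty$ shows that $K\ast u(t,\cdot)$ is Lipschitz, with $|K\ast u(t,\cdot)|_{1}\le C\,\|u(t,\cdot)\|_{L^1\cap H^{\beta,r}(\R^d)}$. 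Taking $\sup_{t\in[0,T]}$ and using the a priori bound \eqref{eq:bound_u_PDE} (respectively the Mittag--Leffler bound of Proposition \ref{Estimate:u}) produces a finite $L=L(T)$ with $|b(t,x)-b(t,y)|\le L|x-y|$ for all $t\in[0,T]$, $x,y\in\R^d$.

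With $b$ bounded and uniformly Lipschitz, pathwise uniqueness is routine: if $X$ and $\widetilde X$ solve the SDE on a common probability space, driven by the same Brownian motion $W$ with $X_0=\widetilde X_0$ a.s., then It\^o's formula gives $d|X_t-\widetilde X_t|^2 = 2\,\langle X_t-\widetilde X_t,\, b(t,X_t)-b(t,\widetilde X_t)\rangle\,dt \le 2L\,|X_t-\widetilde X_t|^2\,dt$, the stochastic integral vanishing because the diffusion coefficients coincide; Grönwall's lemma then forces $\mathbb E|X_t-\widetilde X_t|^2=0$ for every $t\in[0,T]$, i.e.\ $X\equiv\widetilde X$ a.s. Together with weak existence this gives, via Yamada--Watanabe, a unique strong solution, completing the proof.

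I expect the only real obstacle to be the parameter bookkeeping: verifying that the three windows $H_1$, $H_2$, $H_3$ are precisely the regimes in which the relevant Section \ref{Sec:LJ pot} result upgrades the bound on $K\ast u$ from merely Hölder (or $L^q$) continuous to genuinely Lipschitz. In particular, in the super-singular case $H_3$ one must check the compatibility of $\beta-d/r\in(2-d+a,1)$ with $\beta<1$ and $r\ge p^{\prime}$; the inequality $2-d+a<1-d/r$ is exactly the source of the constraint $r>d/(d-1-a)$. The probabilistic part (the Grönwall estimate and the Yamada--Watanabe reduction) is then entirely standard.
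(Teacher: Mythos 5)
Your proposal is correct and follows essentially the same route as the paper: reduce to the frozen drift $b(t,x)=K\ast u(t,x)$ via uniqueness of the mild solution of the Fokker--Planck equation, establish a uniform-in-time Lipschitz bound using Proposition \ref{Prop:HoderSpaceSubCri}~a.\ under $H_1$, Proposition \ref{Estimate:u} together with Proposition \ref{Prop:Grad_K2_Bessel} under $H_2$, and Proposition \ref{Estimate:u} together with Proposition \ref{Prop:HoderSpaceSuperCri} under $H_3$, then conclude by Gr\"onwall and Yamada--Watanabe. The only cosmetic difference is that you run Gr\"onwall on $\mathbb{E}|X_t-\widetilde X_t|^2$ via It\^o, whereas the paper bounds $|X_t-\widetilde X_t|$ pathwise directly; both are equivalent here since the noise cancels.
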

\begin{proof}
   From Proposition \ref{prop:existence_MKV_SDE} under the condition $r\ge p^\prime$ we know that there exists a couple $(X,u)$ such that $X$ is the unique in law weak solution of the   MKV-SDE \eqref{eq:SDE_MKV_1}-\eqref{eq:SDE_MKV_2}  and $u\in C\left([0,T];L^1 \cap L^r(\R^d)\right)$  the unique mild solution  of the nonlinear Fokker–Planck equation.   Then, by applying the Yamada–Watanabe principle \cite{Ikeda_Watanabe_1988},   to prove the existence of a strong solution it is sufficient to upgrade the uniqueness to a pathwise uniqueness.

Let $(X,u)$ and $(\widetilde{X},\widetilde{u})$ be solutions of \eqref{eq:SDE_MKV_1}-\eqref{eq:SDE_MKV_2} defined on the same filtered probability space and with the same Brownian motions. Both $u$ and $\widetilde{u}$ are solutions of the PDE \eqref{PDE: FP1}. Since the solution is unique, we have $u = \widetilde{u}$.  Hence, given the same the deterministic drift $ b(t,x)=K*u(t,x)$, for any $(t,x)\in [0,T]\times \mathbb R^d$,  constraints $H_i, i=1,2,3$ determine a uniform-in-time bound
\begin{equation}\label{eq:global_lipschitz}
\sup_{t\in[0,T]}\left\|\nabla b(t,\cdot) \right\|_{L^\infty(\mathbb R^d)}<L<+\infty,
\end{equation} and then $b(t,\cdot)$ is globally Lipschitz in $x$, uniformly in time.  In fact, bound \eqref{eq:global_lipschitz} derives from the constraints $H_i, i=1,2,3$: in the case    $H_1$   Lipschitz continuity follows from Proposition \ref{Prop:HoderSpaceSubCri}, case $a.$; if  $H_2$ holds $u(t,\cdot)\in H^{\beta,r}(\R^d)$ for  Proposition \ref{Estimate:u}  and  Lipschitz continuity is ensured by Proposition \ref{Prop:Grad_K2_Bessel}; in the  $H_3$ along the case $H_2$ the bound follows from  Proposition \ref{Prop:HoderSpaceSuperCri}, instead of   Proposition \ref{Prop:Grad_K2_Bessel}.
Then, if $Z=X-\widetilde{X}$,  $$|Z_t|\le \int_0^t \left| b(s,X_s)-b(s,\widetilde{X}_s)\right|ds \le L\int_0^t |Z_s| ds,$$
and by Gr\"onwall's lemma pathwise uniqueness for the SDE with given drift is achieved.  From pathwise uniqueness with fixed drift to pathwise uniqueness for the MKV–SDE is easily achieved by invoking the facts that any solution of the MKV-SDE satisfies the martingale problems with the drift depending density of its law, which is solution of the PDE so again  its has to be equal to $b(t,x).$ Hence, we reduce to the previous  classical SDE with the same (deterministic) drift, and the thesis is achieved.

\end{proof}

\begin{remark}\label{Re:MK_strong_ex_un}
The condition $r>a+2$ in $H_2$ arises from the requirement that $\beta-d/r\in(0,1)$, which is necessary to apply Proposition \ref{Estimate:u}. However, this condition is always satisfied since, when $a=d-2$, we have $p^\prime \in \left]a+2,+\infty\right]$. Similarly, in $H_3$ we obtain the condition $ r>d/(d-a-1).$
\end{remark}


\section{Brownian particles moderately interacting via a Lennard-Jones force at a mesoscale}\label{Sec:ParSys}

Let $E$ be a Banach space on $\R^d$ and $m\in \mathbb N$. We denote by   $L^m(\Omega; C([0,T];E))$  the Banach space of $E$-valued stochastic processes $X = (X_t)_{t \in [0,T]}$ which are continuous in time and have a finite $m$-th moment, equipped with the following  norm
$$\|X\|^m_{L^m(\Omega; C([0,T];E))} :=  \mathbb{E}\left[ \sup_{t \in [0,T]} \|X_t\|_{E}^m \right].$$

\subsection{The mesoscale.} Let us introduce a mesoscopic scale determined by a rescaling parameter $\alpha \in (0,1)$. In the following we consider a finite number of Brownian particles  interacting at the mesoscale via the Lennard-Jones force. 
\begin{definition}[Regularized Lennard-Jones force and mollifier]\label{def:mollified_lennard_jones_force}
We define the \emph{regularized Lennard-Jones force} of order $\alpha\in (0,1)$ and scale $N\in \mathbb N$ to be the function $K_N$ defined as \begin{equation} \label{eq:def_KN}
    K_N(x)= K\ast V_N(x), 
\end{equation} 
where  $K$ is given by  \eqref{eq:JL_potential_force} and  $V_N$ is a \emph{mollifier}, that is \begin{equation}\label{eq.Def_VN}
    V_N(x)=N^{d\alpha} \, V(N^\alpha x), \quad \alpha \in (0,1).
\end{equation} The function  $V$  is a probability density function such that $V\in C^2_c(\mathbb{R}^d)$, the space of continuous and compactly supported functions with continuous derivatives up to second order. We may refer to $K_N$ also as  \emph{ mollified Lennard-Jones force}.
  \end{definition}
  \begin{remark}\label{remark:not_uniform_estimates_V_N}
       Note that from hypothesis  \eqref{eq.Def_VN}, we get that for any $p\in [1,+\infty]$
\begin{equation}\label{eq:VN_Lp}
\begin{split}
    \|V_N\|_{L^p(\R^d)}&= N^{\alpha(d-d/p)}\|V\|_{L^p(\R^d)};\\
    \|\nabla V_N\|_{L^p(\mathbb{R}^d)}
&= N^{\alpha\left(d+1-\frac{d}{p}\right)} \, \|\nabla V\|_{L^p(\mathbb{R}^d)}.
\end{split}
\end{equation}
Moreover, let $\beta \in \R$, we get 
\begin{equation}\label{eq:VN_Hbetar}  
\begin{split}
   \|V_N\|_{H^{\beta,p}(\R^d)}&\le CN^{\alpha(d+\beta-d/p)}\|V\|_{H^{2(\beta+d(1/2-1/p),2}(\R^d)} ;\\
    \|\nabla V_N\|_{H^{\beta,p}(\R^d)}
&\le  N^{\alpha\left(d+\beta+1-\frac{d}{p}\right)} \, \| V\|_{H^{2(\beta+1+d(1/2-1/p),2}(\R^d)}.
\end{split}
\end{equation}
\end{remark}
The parameter $\alpha$ is referred to as \emph{the mesoscale rescaling parameter}, and it governs the effective range of the interaction: the rescaling $V_N$  interpolates between microscopic and macroscopic interaction regimes. The admissible range of $\alpha \in (0,1)$  will be restricted appropriately in the subsequent analysis.
 
\begin{proposition}\label{Prop:KN_bounded}
Let $d \ge 2$. The regularized Lennard-Jones force $K_N$ in \eqref{eq:def_KN} is such that   $ K_N \in C^2(\mathbb{R}^d)\cap L^\infty(\mathbb{R}^d)$, for any finite $N\in \mathbb N$.  

\end{proposition}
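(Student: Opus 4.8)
The plan is to show that $K_N = K \ast V_N$ inherits boundedness and $C^2$-regularity by exploiting the fact that $V_N$ is smooth and compactly supported, while $K \in L^1_{\mathrm{loc}}(\mathbb{R}^d)$ by Proposition \ref{Prop:K_L1_KN_bounded} (more precisely, $K \in L^p(B(0,\nu))$ for $p$ in the admissible range and $K \in L^q(B^c(0,\nu))$ for large $q$, by Proposition \ref{Prop:integrability_pq}). Since $K$ is not globally integrable (it only decays like $|x|^{-(b+1)}$ at infinity, which need not be integrable), a direct Young's-inequality estimate against $V_N \in L^1$ is not quite enough for the $L^\infty$ bound; instead I would use the compact support of $V_N$ to localize. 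Writing $\mathrm{supp}\,V_N \subseteq B(0,R_N)$ for some $R_N > 0$, we have $K_N(x) = \int_{B(0,R_N)} K(x-y) V_N(y)\,dy$, and for each fixed $x$ the integration variable $x-y$ ranges over a bounded set, so one splits the integral into the part where $|x-y| < \nu$ (handled by $K \in L^p(B(0,\nu))$ together with Hölder against $V_N \in L^{p'}$) and the part where $|x-y| \ge \nu$ (handled by $K \in L^q(B^c(0,\nu) \cap B(0, |x|+R_N))$, or more simply by boundedness of $K$ away from the origin on the relevant compact annulus). This gives the uniform (in $x$) bound, hence $K_N \in L^\infty(\mathbb{R}^d)$.

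Next, for the $C^2$-regularity I would move all derivatives onto the mollifier: since $V_N \in C^2_c(\mathbb{R}^d)$ and $K \in L^1_{\mathrm{loc}}$, the convolution $K \ast V_N$ is $C^2$ with $\partial^\gamma(K \ast V_N) = K \ast (\partial^\gamma V_N)$ for every multi-index $\gamma$ with $|\gamma| \le 2$. This is the standard fact that convolution with a compactly supported $C^2$ function regularizes; the derivatives pass through the integral by dominated convergence, using that $\partial^\gamma V_N$ is bounded with compact support and $K$ is locally integrable. Continuity of $K \ast (\partial^\gamma V_N)$ then follows from continuity of translation in $L^1_{\mathrm{loc}}$ combined again with the compact support of $\partial^\gamma V_N$. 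Finally, each of these second derivatives $K \ast (\partial^\gamma V_N)$ is itself bounded by the same localization argument as above (with $K$ unchanged and $\partial^\gamma V_N$ in place of $V_N$), which also confirms that $K_N$ and its derivatives are globally bounded, not merely locally.

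The main obstacle, and the point requiring care, is precisely the non-integrability of $K$ at infinity: one cannot simply invoke $K \in L^1(\mathbb{R}^d)$, so every estimate must exploit the compact support of $V_N$ (equivalently $V$) to reduce to integration over a bounded region whose size grows with $|x|$ only through the fixed radius of $\mathrm{supp}\,V_N$. Once this localization is in place, the argument is routine: the behaviour near the origin is controlled by the local integrability exponent $p < d/(a+1)$ paired with Hölder's inequality against the bounded smooth mollifier, and the behaviour at moderate distances is controlled by the fact that $K$ is continuous and bounded on compact sets away from $0$. I would note that the constants in the resulting bounds will depend on $N$ (through $R_N = N^{-\alpha} R$ and through $\|V_N\|_{L^{p'}}$ as recorded in Remark \ref{remark:not_uniform_estimates_V_N}), which is acceptable here since the proposition only asserts finiteness for each fixed $N$; the uniformity in $N$ is the subject of later sections.
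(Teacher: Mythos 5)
Your proposal is correct and follows essentially the same strategy as the paper's proof: both exploit the compact support of $V_N$ to localize the convolution, control the region near the singularity by the local integrability of $K$ (the paper via a uniform-in-$x$ bound on $\|K\|_{L^1(x-\kappa_N)}$ paired with $\|V_N\|_{L^\infty}$, you via $K\in L^p(B(0,\nu))$ paired with $V_N\in L^{p'}$), and control the far region by the boundedness/decay of $K$ away from the origin. The only difference is cosmetic: you prove the $C^2$ smoothing property directly by differentiating under the integral, where the paper cites the classical result in H\"ormander.
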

\begin{proof} See Appendix \ref{appendix:KN_properties}.
\end{proof}

 \begin{remark}
     Following the proof of Young's theorem for convolution, it is also possible to prove that 
     $$\|K_N\|_{L^z(\mathbb{R}^d) }=\|K \ast V_N\|_{L^z(\mathbb{R}^d) } \le \sup_{x \in \mathbb{R}^d}\|K\|_{L^1(x-\kappa_N)}\|V_N\|_{L^{z}(\mathbb{R}^d)}, \mbox{ for every  } z \in [1,+\infty),$$ 
    where $\kappa_N$ is the support of the mollifier $V_N$.
 \end{remark}
\emph{The interacting particle system.}

Let $T>0$ the time horizon, $N\in \mathbb N$ be the total number of particles, and $W=\{(W_{t}^{i})_{t\in[0,T]}, \  1 \le i \le N \}$ is a family of independent standard $\mathbb{R}^d$-valued Brownian motions defined on a filtered probability space $\left(\Omega,\mathcal{F},(\mathcal{F}_{t})_{t \in [0,T]},\mathbb{P}\right)$.   Denote by $(X^1,\ldots,X^N) =\left(X^1_t,\ldots,X^N_t \right)_{t\in [0,T]}$ the $N$-dimensional stochastic process in $(\mathbb R^{Nd},\mathcal{B}_{\mathbb R^{Nd}}),$ modelling the location of the $N$ particles which move randomly, according the the following dynamics

\begin{equation}\label{SDE:Meso1}
\begin{cases}
dX_{t}^{i}=   \displaystyle\frac{1}{N}\displaystyle\sum_{k=1 }^{N} K_{N}(X_{t}^{i} -X_{t}^{k})  \ dt + \sqrt{2} \ dW_{t}^{i}, \quad 0<t\le T, \quad 1 \le i \leq N; \\
X_0^{i}=\xi^i. 
\end{cases}
\end{equation}
the initial conditions  $\xi^i, i=1,..,N$  are independent random variables,  independent upon $W$. 
By \eqref{eq:def_KN} and \eqref{eq.Def_VN}, particles are assumed to interact at a mesoscopic scale determined by the rescaling parameter $\alpha \in (0,1)$. We adopt the convention that the self-interaction of a particle is excluded.

System \eqref{SDE:Meso1} admits a strong solution, pathwise unique.

\begin{theorem}[Well-posedness of the particle system] \label{teo_exis:uniq_particle_SDEs}
Let $K$ be as in \eqref{eq:JL_potential_force}. For any $d\ge 2$, let $d-1>a>b>0$. Let $N\ge 2$, and for all $i$, let $\xi^i$ be independent random variables $\mathcal{F}_0$-measurable and independent upon $W_i, i=1,..,N$. The system  \eqref{SDE:Meso1}
    admits a strong solution $(X^1,\ldots,X^N) =\left(X^1_t,\ldots,X^N_t \right)_{t\in [0,T]}$. Moreover, the pathwise uniqueness holds.
\end{theorem}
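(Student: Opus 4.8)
The plan is to exploit Proposition \ref{Prop:KN_bounded}, which tells us that for each fixed $N$ the regularized kernel $K_N$ belongs to $C^2(\mathbb{R}^d)\cap L^\infty(\mathbb{R}^d)$; in particular $K_N$ is bounded and globally Lipschitz on $\mathbb{R}^d$ (its gradient $\nabla K_N = \nabla K \ast V_N = K \ast \nabla V_N$ is continuous and bounded, again by the convolution bound in the preceding remark together with $\nabla V_N \in C_c^1$). Consequently the drift of the $Nd$-dimensional system \eqref{SDE:Meso1}, namely $b_N^i(x^1,\dots,x^N) = \tfrac1N\sum_{k\ne i} K_N(x^i - x^k)$, is globally Lipschitz and of linear (indeed bounded) growth as a map $\mathbb{R}^{Nd}\to\mathbb{R}^{Nd}$, with a Lipschitz constant depending on $N$ (but this is fine, since $N$ is fixed here). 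The diffusion coefficient is the constant matrix $\sqrt{2}\,\mathrm{Id}$, trivially Lipschitz.

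First I would set up \eqref{SDE:Meso1} as a single $\mathbb{R}^{Nd}$-valued SDE $d\mathbf{X}_t = \mathbf{b}_N(\mathbf{X}_t)\,dt + \sqrt{2}\,d\mathbf{W}_t$ with $\mathcal{F}_0$-measurable initial datum $\mathbf{X}_0 = (\xi^1,\dots,\xi^N)$ independent of the Brownian motion $\mathbf{W} = (W^1,\dots,W^N)$. Then I would invoke the classical existence-and-uniqueness theorem for SDEs with globally Lipschitz, linearly-growing coefficients (e.g.\ the standard Picard iteration / fixed-point argument on $L^2(\Omega;C([0,T];\mathbb{R}^{Nd}))$, as in \cite{Karatzas,Ikeda_Watanabe_1988}): this yields a unique (up to indistinguishability) strong solution $(\mathbf{X}_t)_{t\in[0,T]}$ adapted to the augmented filtration generated by $\mathbf{W}$ and $\mathbf{X}_0$, with continuous paths. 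Reading off the components gives the processes $(X^1_t,\dots,X^N_t)_{t\in[0,T]}$ solving \eqref{SDE:Meso1}, and pathwise uniqueness for the system is exactly pathwise uniqueness for the $\mathbb{R}^{Nd}$-valued equation. One should also note that no self-interaction convention matters only for the precise form of $\mathbf{b}_N$; it does not affect the Lipschitz/growth bounds.

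There is essentially no hard part here: the whole point of the mesoscale regularization is precisely to reduce, for each finite $N$, to a benign SDE. The only mild subtleties to record are (i) justifying that $K_N$ is globally Lipschitz — which follows since $\nabla K_N = K\ast \nabla V_N$ is bounded by $\big(\sup_{x}\|K\|_{L^1(x-\kappa_N)}\big)\|\nabla V_N\|_{L^\infty}<\infty$, where $\kappa_N = \mathrm{supp}\,V_N$, using local integrability of $K$ from Proposition \ref{Prop:K_L1_KN_bounded} — and (ii) checking the integrability of the initial datum is not even needed for strong existence/uniqueness in the Lipschitz setting, though one may additionally remark that if $\mathbb{E}|\xi^i|^2<\infty$ then $\mathbb{E}[\sup_{t\le T}|X^i_t|^2]<\infty$ by the usual Gronwall estimate. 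I would therefore present the argument compactly: state the reduction to the $\mathbb{R}^{Nd}$ SDE, verify the global Lipschitz bound on $\mathbf{b}_N$ via the boundedness of $\nabla K_N$, and conclude by citing the classical well-posedness theorem, leaving the routine Picard estimates to the references.
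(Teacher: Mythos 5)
Your proposal is correct, but it takes a different (and more elementary) route than the paper. The paper's proof is a one-liner: since for fixed $N$ the mollified kernel $K_N$ is bounded (Proposition \ref{Prop:KN_bounded}; the reference in the printed proof to ``property $ii)$ of Proposition \ref{Prop:K_L1_KN_bounded}'' appears to be a mislabel), the drift of the $\mathbb R^{Nd}$-valued system is bounded and measurable and the noise is additive and nondegenerate, so Veretennikov's theorem \cite[Theorem 1]{1981_Veretennikov} directly yields strong existence and pathwise uniqueness with no Lipschitz hypothesis at all. You instead upgrade the regularity: you show the drift is globally Lipschitz for fixed $N$, via boundedness of $K\ast\nabla V_N$, and then invoke the classical It\^o/Picard well-posedness theorem. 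This is perfectly sound — indeed the paper itself proves $\|\nabla K_N\|_{L^\infty(\R^d)}<+\infty$ later, in Proposition \ref{prop:uN_regularity} iii), by essentially your argument — and your remark that no moment assumption on $\xi^i$ is needed (bounded Lipschitz coefficients, localization) is also fine. The trade-off: your route needs the extra smoothness of $K_N$ (available here because $V\in C_c^2$ and $K\in L^1_{\mathrm{loc}}$) but cites only textbook theory, whereas the paper's route needs only $K_N\in L^\infty$ but leans on the deeper Veretennikov result. One small caveat in your write-up: the identity $\nabla K_N=\nabla K\ast V_N$ is not justified for the whole admissible range $a<d-1$, since $\nabla K$ fails to be locally integrable when $a\ge d-2$; however, the form you actually use for the bound, $\nabla K_N=K\ast\nabla V_N$ with $|K\ast\nabla V_N(x)|\le\bigl(\sup_{x}\|K\|_{L^1(x-\kappa_N)}\bigr)\|\nabla V_N\|_{L^\infty(\R^d)}$, is the correct one and suffices, so the argument stands.
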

\begin{proof}
    The thesis follows by standard argument by applying  the  Veretennikov's theorem \cite[Theorem 1]{1981_Veretennikov}, for the property $ii)$ of  Proposition \ref{Prop:K_L1_KN_bounded}.
\end{proof}


\subsection{A stochastic PDE for the empirical density} 
 
Let  $\mu_N \in C\left([0,T];\mathcal{P}(\mathbb R^d) \right) $ denote the empirical measure associated with the  $N$-particle system,  i.e. the probability law on the path space, whose time marginals are defined  by    
\begin{equation*} 
\mu_N(t)=\frac{1}{N}\sum_{i=1}^N \varepsilon_{X_t^{i}}, \qquad t\in [0,T],
\end{equation*}
where $\varepsilon_x$ is the Dirac delta measure concentrated in $x\in \mathbb R^d$. The corresponding mollified (at the mesoscale) empirical measure, or equivalently, the \emph{empirical density}, is defined by
\begin{equation}\label{eq:def_empirical_density}
    u_N(t,x) = (V_N \ast \mu_N(t))(x) = \frac{1}{N} \sum_{k=1}^N V_N(x - X_t^k),
\qquad x \in \mathbb{R}^d,
\end{equation}
where $V_N$ is the mesoscale rescaling  \eqref{eq.Def_VN}. The empirical density $u_N$ exhibits good regularity properties, under constraints on the parameters charactering the force.

\begin{proposition}[Empirical density regularity]\label{prop:uN_regularity} Let $K$ be as in \eqref{eq:JL_potential_force}, and $r$ as in Theorem \ref{Teo: EUMild}. For any $d\ge 2$, let $d-1>a>b>0$. For any fixed  $N\ge 2$, the empirical density  $u_N$ defined by \eqref{eq:def_empirical_density} shares the following properties:
\begin{enumerate}[label=\roman*)]
    \item  $u_N \in C\left( [0,T]; C_c^2(\mathbb R^d)\right)$;
    \item $K*u_N$ is well defined  and belongs to $ C\left( [0,T]; C^2(\mathbb R^d)\right)$;
    \item for any $t\in [0,T]$  $K*u_N(t,\cdot)$ is Lipschitz continuous.
\end{enumerate}
\end{proposition}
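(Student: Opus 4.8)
The plan is to exploit the fact that, for fixed finite $N$, the mollifier $V_N \in C_c^2(\mathbb R^d)$ carries all the regularity, while the particle positions $X_t^k$ are continuous in $t$ by Theorem~\ref{teo_exis:uniq_particle_SDEs}. First I would establish property~$i)$ directly from the representation \eqref{eq:def_empirical_density}: $u_N(t,\cdot)$ is a finite average of translates $V_N(\cdot - X_t^k)$, so it inherits from $V$ the properties of being $C^2$ and compactly supported (its support is contained in $\bigcup_k \big(X_t^k + \operatorname{supp} V_N\big)$, a bounded set for each fixed $\omega$ and $t$). For the continuity in $t$ with values in $C_c^2$, I would use that $t\mapsto X_t^k$ is a.s.\ continuous, that $V_N$ and its derivatives up to order $2$ are uniformly continuous (being continuous with compact support), and combine these via the estimate $\|u_N(t,\cdot)-u_N(s,\cdot)\|_{C^2}\le \tfrac1N\sum_k \sup_x |D^{\le 2}V_N(x-X_t^k)-D^{\le 2}V_N(x-X_s^k)|$, which tends to $0$ as $s\to t$ by uniform continuity; a uniform support bound on $[0,T]$ follows from continuity of the paths on the compact interval.

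For property~$ii)$, I would invoke Proposition~\ref{Prop:K_L1_KN_bounded}, which gives $K\in L^1_{\mathrm{loc}}(\mathbb R^d)$, together with the fact that $u_N(t,\cdot)$ is bounded and compactly supported; hence the convolution $K*u_N(t,\cdot)$ is well defined pointwise. To get the $C^2$ regularity, I would note that $K*u_N = (K*V_N)*\mu_N(t) = K_N * \mu_N(t) = \tfrac1N\sum_k K_N(\cdot - X_t^k)$, and then apply Proposition~\ref{Prop:KN_bounded}, which asserts $K_N\in C^2(\mathbb R^d)\cap L^\infty(\mathbb R^d)$; thus $K*u_N(t,\cdot)$ is a finite average of $C^2$ functions, hence $C^2$. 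Continuity in $t$ with values in $C^2(\mathbb R^d)$ (locally uniformly) follows exactly as in $i)$, using the uniform continuity of $K_N$ and its derivatives up to order $2$ on compact sets together with the path continuity of the $X_t^k$. For property~$iii)$, Lipschitz continuity of $K*u_N(t,\cdot) = \tfrac1N\sum_k K_N(\cdot - X_t^k)$ is immediate since $K_N\in C^1(\mathbb R^d)$ with $\nabla K_N\in L^\infty(\mathbb R^d)$ (again by Proposition~\ref{Prop:KN_bounded}), so the Lipschitz constant is bounded by $\|\nabla K_N\|_{L^\infty(\mathbb R^d)}$, uniformly in $t$.

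The only slightly delicate point is the rewriting $K*u_N = K_N*\mu_N(t)$, i.e.\ the associativity $(K*V_N)*\mu_N = K*(V_N*\mu_N)$ when $\mu_N$ is a sum of Dirac masses; this is elementary here because $V_N$ is a test function and the convolution with a finite atomic measure is just a finite sum of translates, so Fubini applies trivially on the relevant compact sets. I expect no real obstacle: the statement is a bookkeeping consequence of the smoothing provided by $V_N$ for fixed $N$ and of the already-established regularity of $K_N$ in Proposition~\ref{Prop:KN_bounded}; the estimates are not uniform in $N$, but uniformity is not claimed here.
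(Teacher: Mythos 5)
Your overall route is the same as the paper's: for fixed $N$ you let the mollifier carry the regularity, write $K*u_N(t,\cdot)=K_N*\mu_N(t)=\tfrac1N\sum_k K_N(\cdot-X_t^k)$ via associativity of the convolution (which the paper also justifies through the local integrability of $K$ from Proposition \ref{Prop:K_L1_KN_bounded}), and deduce i) and ii) from $V_N\in C_c^2(\mathbb R^d)$, $K_N\in C^2(\mathbb R^d)$ and the path continuity of the particles; your treatment of the time continuity is in fact more explicit than the paper's.

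The one step you cannot get for free is iii). You claim $\nabla K_N\in L^\infty(\mathbb R^d)$ ``again by Proposition~\ref{Prop:KN_bounded}'', but that proposition only asserts $K_N\in C^2(\mathbb R^d)\cap L^\infty(\mathbb R^d)$: it bounds $K_N$ itself, not its gradient, and $C^2$ regularity alone gives only local bounds on the derivatives, while Lipschitz continuity of $K*u_N(t,\cdot)$ on all of $\mathbb R^d$ requires a \emph{global} bound on $\nabla K_N$. This global bound is precisely the substantive content of the paper's proof of iii): there one writes $\nabla K_N=\nabla V_N*K$ and estimates
$|\nabla K_N(x)|\le N^{d(\alpha+1)}\bigl(\|K\|_{L^p(B(0,\nu))}N^{-d\alpha/p'}+\|K\|_{L^q(B^c(0,\nu))}N^{-d\alpha/q'}\bigr)\|\nabla V\|_{L^1\cap L^r(\mathbb R^d)}$,
using the splitting of $K$ from Proposition \ref{Prop:integrability_pq}, H\"older's inequality and a scaling argument for the dilation $N^\alpha$. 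Alternatively, you could repeat the appendix argument for Proposition \ref{Prop:KN_bounded} with $\nabla V_N$ in place of $V_N$, i.e.\ $\|\nabla K_N\|_{L^\infty(\mathbb R^d)}\le\bigl(\sup_{x}\|K\|_{L^1(x-\kappa_N)}\bigr)\|\nabla V_N\|_{L^\infty(\mathbb R^d)}<+\infty$. Either way the fix is a few lines, but as written your proposal skips the only nontrivial estimate in the proof by attributing it to a statement that does not contain it.
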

\begin{proof}
   Property i) derives directly  from the conditions  \eqref{eq.Def_VN} and then \eqref{eq:VN_Lp}  on the rescaling kernel $V_N$,   the fact that is a finite sum of  $C^2$ compactly supported functions, which guarantees that belongs to all $L^p$ spaces too.  
Furthermore, since also  $K$ is locally integrable for Proposition \ref{Prop:K_L1_KN_bounded} the convolution $K \ast V_N$ is well defined and belongs to $C^2(\mathbb{R}^d)$. This regularization ensures that the double convolution with the empirical measure
\begin{equation*}
K_N \ast\mu_N(t) (X_t^k)= (K \ast V_{N}) \ast \mu_N(t) (X_t^{k})=K \ast (V_{N}\ast \mu_N(t) )(X_t^{k}),
\end{equation*} is well defined and coincides, by the associativity of convolution, which holds under the  local $L^p$ regularity of $K$. Then, condition ii) derives.

Since $K \ast u_N(t,\cdot) \in C^2(\R^d)$, a  sufficient condition  for property iii)  is that
$
L: = \sup_{x \in \mathbb{R}^d} \left|\nabla K*u_N(t,x) \right| < +\infty. $
It is easy to see that
$
L \le \|\nabla K_N\|_{L^\infty(\mathbb{R}^d)}, 
$ so it is sufficient to prove that $\|\nabla K_N\|_{L^\infty(\mathbb{R}^d)}<+\infty.$

Now,
\begin{align*}
\nabla K_N(x)=\nabla (K \ast V_N)(x) = \nabla (V_N \ast K)(x) = \nabla V_N \ast K(x) 
= N^{d \alpha} \int_{\mathbb{R}^d} K(y)\, \nabla V(N^\alpha(x - y)) \, dy.
\end{align*}

Let $s \in [1,+\infty]$. We define the contraction operator
$
D :  (\lambda, f) \in \ \R_+ \times L^s(\mathbb{R}^d) \to f(\lambda \cdot) \in L^s(\mathbb{R}^d).
$

Fix $\lambda > 0$. If $s = +\infty$, then $D_\lambda$ is an isometry. For $s \in [1,+\infty)$, we have
\begin{align*}
\|D_\lambda(f)\|_{L^s(\mathbb{R}^d)}^s &= \int_{\mathbb{R}^d} |f(\lambda x)|^s\, dx= \lambda^{-d/s} \|f\|_{L^s(\mathbb{R}^d)}.
\end{align*}

  Let $p,q$ as in Proposition \ref{Prop:integrability_pq} and let $p^\prime,q^\prime$ their conjugate exponent; we have   
  \begin{align*}
|\nabla K_N(x)| &= N^{d( \alpha+1)} \left| \int_{\mathbb{R}^d} K(y)\, \nabla V(N^\alpha(x - y)) \, dy \right|\\
& \le N^{d (\alpha+1)} \left( \|K\|_{L^p(B(0,\nu))} N^{-d\alpha/p^{\prime}} + \|K\|_{L^q(B^c(0,\nu))} N^{-d\alpha/q^{\prime}} \right) \|\nabla V\|_{L^1 \cap L^{r}(\mathbb{R}^d)} \\
&= N^{d (\alpha+1)} C_{1,\nu}C_{K,p,q} \left( N^{-d\alpha/p^{\prime}} + N^{-d\alpha/q^{\prime}} \right) \|\nabla V\|_{L^1 \cap L^{r}(\mathbb{R}^d)}.
\end{align*}

Thus, since $V\in C^2_c(\mathbb R^d)$ we have the thesis. 
\end{proof}
Proposition \ref{prop:uN_regularity} state regularity conditions for any fixed finite  $N\in \mathbb N$. 
 The particle system \eqref{SDE:Meso1} may be re-written with a clear dependence on the common field $u_N$ as 
\begin{equation}\label{SDE:Meso2}
    dX_{t}^{i}=  K\ast u_N(t,X_{t}^{i})\ dt + \sqrt{2}\ dW_{t}^{i}, \quad t\in[0, T] ,\quad 1 \le  i \le N.
\end{equation}

\begin{proposition}\label{prop:equation_u_N}
Suppose that the conditions of the Theorem \ref{teo_exis:uniq_particle_SDEs}  are satisfied.  Let   $(X^1,\ldots,X^N) =\left(X^1_t,\ldots,X^N_t \right)_{t\in [0,T]}$ be the unique solution of \eqref{SDE:Meso1}. Then the empirical density $u_N$ is solution of the following stochastic PDE  (SPDE), defined  for any $(t,x)\in [0,T]\times \mathbb R^d$  \begin{equation}\label{eq:SPDE_u_N}
\begin{split}
u_N(t,x) &= u^N_0(x) + \int_0^t \Delta u_N(s,x) \ ds- \int_0^t \int_{\mathbb R^d}  \nabla V_{N} (x-y)  \left (  K\ast u_N(s ,y)\right) \mu_N(s)(dy) \ ds  \\
&+ S_N(t,x),
\end{split}
\end{equation}
where
\begin{equation}\label{eq:StoConInt}
    S_N(t,x):=-\frac{1}{N}\sum_{i=1}^N\int_0^te^{(t-s)\Delta}\nabla V_N(x-X_s^i) \ dW^{i}_s. 
\end{equation}
\end{proposition}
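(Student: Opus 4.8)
The plan is to obtain \eqref{eq:SPDE_u_N} by applying Itô's formula to each particle trajectory and then averaging over the $N$ particles. The key preliminary point is that, by Proposition \ref{prop:uN_regularity}, for every fixed finite $N$ the drift $(t,x)\mapsto K\ast u_N(t,x)$ belongs to $C([0,T];C^2(\R^d))$ and is bounded and Lipschitz in $x$ uniformly on $[0,T]$, while $V_N\in C_c^2(\R^d)$; hence $s\mapsto V_N(x-X_s^i)$ is a genuine $C^2$-function of the continuous semimartingale $X^i$ solving \eqref{SDE:Meso1}--\eqref{SDE:Meso2}, so Itô's formula applies with no integrability obstruction and the stochastic integrals below are legitimate. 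Fix $x\in\R^d$ and $t\in[0,T]$.

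First I would apply Itô's formula to $y\mapsto V_N(x-y)$ along $X^i$. Using $\nabla_y[V_N(x-y)]=-(\nabla V_N)(x-y)$, $\Delta_y[V_N(x-y)]=(\Delta V_N)(x-y)$, and the quadratic covariation $d\langle X^i\rangle_s=2\,I_d\,ds$ to evaluate the second-order term, one gets
\begin{equation*}
V_N(x-X_t^i)=V_N(x-X_0^i)+\int_0^t(\Delta V_N)(x-X_s^i)\,ds-\int_0^t(\nabla V_N)(x-X_s^i)\cdot\bigl(K\ast u_N(s,X_s^i)\bigr)\,ds-\sqrt 2\int_0^t(\nabla V_N)(x-X_s^i)\cdot dW_s^i.
\end{equation*}
Averaging over $i=1,\dots,N$ and using the definition \eqref{eq:def_empirical_density}: the first term yields $u_0^N(x)$; since $\Delta_x V_N(x-X_s^i)=(\Delta V_N)(x-X_s^i)$, the second term yields $\int_0^t\Delta u_N(s,x)\,ds$; and rewriting the empirical average $\tfrac1N\sum_i(\,\cdot\,)(X_s^i)$ as integration against $\mu_N(s)$ turns the third term into $-\int_0^t\int_{\R^d}\nabla V_N(x-y)\bigl(K\ast u_N(s,y)\bigr)\mu_N(s)(dy)\,ds$. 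Collecting the remaining martingale contributions into a single stochastic process then gives \eqref{eq:SPDE_u_N}, with that process playing the role of $S_N(t,x)$.

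It remains to identify $S_N$ with the stochastic-convolution expression \eqref{eq:StoConInt}, and this is the step that requires the most care. I would carry it out by a variation-of-constants (Duhamel) rewriting: apply Itô's formula instead to the $s$-dependent map $\Phi^i(s):=\bigl(e^{(t-s)\Delta}V_N(\cdot-X^i_s)\bigr)(x)=\bigl(g_{2(t-s)}\ast V_N\bigr)(x-X_s^i)$ on $s\in[0,t]$. The identity $\partial_s\,e^{(t-s)\Delta}=-\Delta\,e^{(t-s)\Delta}$ exactly cancels the Itô second-order (Laplacian) term, so that only the drift and the martingale parts survive, each now carrying the smoothing factor $e^{(t-s)\Delta}$; one uses here that $e^{(t-s)\Delta}$ commutes with $\nabla$ and that $\Phi^i(t)=V_N(x-X_t^i)$ because $g_0=\delta$. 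Averaging this second identity over $i$ then exhibits the martingale part precisely as $S_N(t,x)$ in the form \eqref{eq:StoConInt}. The genuine obstacle throughout is the bookkeeping with the heat semigroup — its commutation with differentiation, the boundary value $e^{0\cdot\Delta}=\mathrm{Id}$ acting on the compactly supported $C^2$ profiles $V_N(x-\cdot)$, and checking that (thanks to Proposition \ref{prop:uN_regularity}) all integrands are continuous and bounded, so that the deterministic and stochastic integrals, and their interchange with the finite sum over $i$, are all licit; the remaining manipulations are direct.
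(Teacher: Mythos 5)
Your first step --- It\^o's formula applied to $y\mapsto V_N(x-y)$ along each $X^i$, then averaging over $i$ --- is exactly the paper's (one-line) argument, and you carry it out correctly: it yields
$u_N(t,x)=u_N(0,x)+\int_0^t\Delta u_N(s,x)\,ds-\int_0^t\langle\mu_N(s),\nabla V_N(x-\cdot)\cdot K\ast u_N(s,\cdot)\rangle\,ds-\tfrac{\sqrt2}{N}\sum_{i=1}^N\int_0^t\nabla V_N(x-X_s^i)\cdot dW_s^i$,
where the noise is the \emph{plain} stochastic integral, with the factor $\sqrt2$ inherited from $dX_s^i=\cdots+\sqrt2\,dW_s^i$ and with no heat semigroup inside.

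The problem is your final ``identification'' step. The Duhamel computation (It\^o applied to $s\mapsto(e^{(t-s)\Delta}V_N)(x-X^i_s)$) is itself correct, but what it produces is a \emph{different} identity, namely the mild formulation \eqref{MildFormula u}, in which the initial datum and the drift term are also smoothed by $e^{t\Delta}$ and $e^{(t-s)\Delta}$. Its martingale part, $-\tfrac{\sqrt2}{N}\sum_i\int_0^t e^{(t-s)\Delta}\nabla V_N(x-X_s^i)\,dW_s^i$, is not the same process as the martingale part of the strong-form identity above; the two differ by $\tfrac{\sqrt2}{N}\sum_i\int_0^t\bigl(e^{(t-s)\Delta}-\mathrm{I}\bigr)\nabla V_N(x-X_s^i)\,dW_s^i$, which does not vanish. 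So exhibiting the stochastic convolution in the mild identity does not let you substitute $S_N$ of \eqref{eq:StoConInt} for the plain noise term in the equation containing $\int_0^t\Delta u_N\,ds$, and as a proof of \eqref{eq:SPDE_u_N} \emph{as literally written} the argument does not close. In fairness, the statement itself mixes the two formulations (a strong-form Laplacian term next to a mild-form stochastic convolution, and the $\sqrt2$ is dropped), and the paper's proof is a one-line appeal to It\^o's formula that does not address this either; the consistent conclusions are the strong form with the plain martingale noise and, equivalently after your Duhamel step, the mild form \eqref{MildFormula u} with $S_N$. If you present your two computations as producing these two equivalent formulations, rather than claiming the second identifies the noise of the first, your argument is complete and indeed more explicit than the paper's.
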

\begin{proof}
    The  SPDE \eqref{eq:SPDE_u_N} is derived by a simple application of the via It\^o's formula to $u_N$ and the fact that$(X^1,\ldots,X^N)$ is a solution of \eqref{SDE:Meso1}.
\end{proof}

Since by the convolution properties we have $$ \left[\nabla V_{N} *    \left(  K\ast u_N(t ,\cdot)\right) \mu_N(s) \right](x) =  \nabla\cdot  \left[  V_{N} *  \left (  K\ast u_N(t ,\cdot)\right) \mu_N(s) \right](x), $$
the mild form of the equation \eqref{eq:SPDE_u_N} for the empirical density is given by 
\begin{equation} \label{MildFormula u}
    \begin{split}
u_N(t,x)&=e^{t\Delta }u^N_0(x) - \int_0^t \nabla \cdot e^{(t-s)\Delta } \left[  V_{N} *  \left (  K\ast u_N(s ,\cdot)\right) \mu_N(s) \right](x)   \ ds \\
&- \frac{1}{N} \sum_{i=1}^N \int_0^t  e^{(t-s)\Delta} \nabla V_{N} (x-X_s^{i})\ dW^i_s.
\end{split}
\end{equation}

\emph{The initial conditions.}
In the following, we consider the initial condition given below for the particle system.
Let  $r\in [1,+\infty]$, and $\beta \in[0,1)$. For any $ \  m\ge 1$, we have
\begin{eqnarray}
\displaystyle\sup_{1< i\leq N,\, N\in \mathbb{N}} \|X_0^i\|_{L^m(\Omega)}<+\infty;\label{Initial condition v.a.} \\
 \sup_{N \in \mathbb{N}}   \left\| u_N(0,\cdot) \right\|_{L^m\left(\Omega; H^{\beta,r}(\mathbb{R}^d)\right)} <+\infty. \label{Initial condition density}
\end{eqnarray}

\subsubsection{The stochastic convolution integral: a priori estimates.}
Let us note that,  for any $x\in \mathbb R^d$, and any fixed $N\in \mathbb N$, the stochastic convolution process $S_N$, given by \eqref{eq:StoConInt}, is well defined since the integrand is a square-integrable predictable process; however it is not a martingale because of the dependence of the integrand upon the final time $t$.  Hence, we cannot simply apply a Burkholder–Davis–Gundy (BDG) type inequality in the UMD  (unconditional martingale differences) Banach spaces to control the integral, uniformly in time.
However, by an extension of the Garsia–Rodemich–Rumsey Lemma in the case of complete metric space \cite[Appendix A]{2010_Friz}, and a continuity condition for $S_N$  proven in \cite[Proposition B.1.]{2023_oliveira_richard_tomasevic} it has been proven a bound in  $L^m(\Omega;H^{\beta,r}(\mathbb R^d))$ of such kind of  stochastic convolution integral, even tough not uniform in $N$. We recall the overall result in the following Lemma. The interest reader may also see \cite{2021_Richard} for the $\mathbb R^d$ case.

\begin{lemma}[\cite{2023_oliveira_richard_tomasevic}]\label{Stime: ConInt 3}
  Assume that the initial  conditions  \eqref{Initial condition v.a.} and \eqref{Initial condition density} are satisfy and let  $K$ denote a generic locally integrable kernel. Let $m\ge1$, $z\in [1,+\infty]$, $\beta \in[0,1)$, $\varepsilon>0$ and $\delta \in (0,1]$. Then there exists $C>0$ such that for any $t \in [0,T]$ and $N \in \N$, 
     \begin{eqnarray}\label{eq:MN_difference_estimate}
    \left\|S_N(t,\cdot)-S_N(s,\cdot)\right\|_{L^m(\Omega;H^{\beta,z}(\mathbb R^d))}&\le& C|t-s|^{\frac{\delta}{2}}N^{-\frac{1}{2}(1-2\alpha(d+2\delta+\beta-d/z))}; \\
     \label{eq:M_N_estimate}
         \left\|S_N\right\|_{L^m(\Omega; C([0,T];L^z(\R^d)))}  &\le& CN^{-\frac{1}{2}(1-\alpha(d+d\kappa))+\varepsilon},
     \end{eqnarray}
      where $\kappa=\max\left(1-{2}/{z},0\right)$ and $\alpha$ is the mesocale rescaling  parameter.
     
   \end{lemma}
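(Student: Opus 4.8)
The plan is to reduce the two estimates \eqref{eq:MN_difference_estimate} and \eqref{eq:M_N_estimate} to two standard ingredients: (i) a time-increment bound for the stochastic convolution, obtained from the factorization method combined with a BDG-type inequality in UMD spaces for the (genuinely martingale) inner integral, and (ii) the Garsia–Rodemich–Rumsey Lemma in the form for complete metric spaces, which upgrades pointwise-in-time moment bounds to a bound on the supremum over $[0,T]$. Since the hypotheses here are exactly those of \cite[Proposition B.1]{2023_oliveira_richard_tomasevic} and \cite[Appendix A]{2010_Friz}, I would cite those references for the analytical core and only carry out the scaling bookkeeping that produces the explicit powers of $N$, which is where the mesoscale rescaling $V_N(x)=N^{d\alpha}V(N^\alpha x)$ enters.

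First I would establish \eqref{eq:MN_difference_estimate}. Write $S_N(t,\cdot)-S_N(s,\cdot)$ as the sum of two terms: the part of the integral on $[0,s]$ carrying the operator difference $\big(e^{(t-r)\Delta}-e^{(s-r)\Delta}\big)\nabla V_N(\cdot-X_r^i)$, and the part on $[s,t]$ carrying $e^{(t-r)\Delta}\nabla V_N(\cdot-X_r^i)$. For each $i$ these are $H^{\beta,z}$-valued martingales in the upper limit, so the vector-valued BDG inequality in the UMD space $H^{\beta,z}(\mathbb R^d)$ applies, reducing the $L^m(\Omega)$ norm to the $L^m(\Omega;L^2(0,t;H^{\beta,z}))$ norm of the integrand. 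I would then bound the $H^{\beta,z}$-norm of $e^{(t-r)\Delta}\nabla V_N(\cdot-X_r^i)$ using the semigroup estimates of Propositions \ref{HeatSemi_Ineq_Lp} and \ref{Es:HeatSemigroup_Bessel} together with the scaling identities \eqref{eq:VN_Lp}–\eqref{eq:VN_Hbetar}: each spatial derivative and each half-power of the Bessel operator costs a power $(t-r)^{-1/2}$ of the time variable and contributes to the $N$-exponent through $\|\nabla V_N\|_{H^{\beta,z}}\lesssim N^{\alpha(d+\beta+1-d/z)}$. Interpolating the operator-difference term so as to extract a factor $|t-s|^{\delta/2}$ (at the price of an extra $|t-s|^{-\delta/2}$-type cost in the semigroup smoothing, absorbed by the Gaussian decay), summing the $N$ independent martingale contributions, and using the initial-condition moment bounds \eqref{Initial condition v.a.}, \eqref{Initial condition density}, yields the factor $|t-s|^{\delta/2}N^{-\frac12(1-2\alpha(d+2\delta+\beta-d/z))}$ after collecting the $1/N$ prefactor from \eqref{eq:StoConInt} against the $\sqrt N$ from the independence.

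Then \eqref{eq:M_N_estimate} follows by applying the complete-metric-space GRR Lemma to the map $t\mapsto S_N(t,\cdot)\in L^z(\mathbb R^d)$: the time-continuity modulus supplied by \eqref{eq:MN_difference_estimate} with $\beta=0$, $z=z$, and $\delta$ chosen close to $0$, gives a Hölder-type control whose GRR integral converts the pointwise bound into $\|S_N\|_{L^m(\Omega;C([0,T];L^z))}\lesssim N^{-\frac12(1-\alpha(d+d\kappa))+\varepsilon}$, the $\varepsilon>0$ absorbing the logarithmic/Hölder-exponent loss inherent to GRR and the freedom in $\delta$, and $\kappa=\max(1-2/z,0)$ arising from the $L^2\!\leftrightarrow L^z$ mismatch in the BDG step when $z>2$. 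The main obstacle is not analytical depth—each step is a citation or a routine estimate—but the careful tracking of the exponent of $N$ through the chain of scaling identities, the semigroup smoothing, and the GRR conversion, so that the final powers match exactly $-\tfrac12\big(1-2\alpha(d+2\delta+\beta-d/z)\big)$ and $-\tfrac12\big(1-\alpha(d+d\kappa)\big)+\varepsilon$; in particular one must be careful that no uniformity in $N$ is claimed, since these bounds genuinely degrade as $N\to\infty$ unless $\alpha$ is taken small, which is precisely why the cut-off argument of the later sections is needed.
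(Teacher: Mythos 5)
Your proposal is correct and takes essentially the same route as the paper: the lemma itself is recalled by citation, but the paper's own Appendix proof of the analogous Bessel-space estimate (Proposition \ref{Prop:BesselMNEst}) proceeds exactly as you describe --- split the increment into the $[s,t]$ part and the $[0,s]$ part carrying the semigroup difference, freeze the semigroup endpoint so the truncated stochastic integrals are genuine martingales, apply the BDG inequality in UMD spaces together with the heat-semigroup smoothing estimates and the scaling identities for $V_N$, interpolate to extract the factor $|t-s|^{\delta/2}$, and conclude with the Garsia--Rodemich--Rumsey argument (via \cite[Corollary 4.4]{2021_Richard}) to pass to the supremum in time. The only cosmetic discrepancy is your reference to the factorization method, which neither your subsequent argument nor the paper actually uses.
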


Following the same arguments used for the proof of \eqref{eq:M_N_estimate}, the boundedness of the stochastic convolution integral in the Bessel potential space may be proven.
\begin{corollary}
    \label{cor:est_stochastic_convolution_bessel}
    Let   us consider all the parameters as in Lemma \ref{Stime: ConInt 3} and define 
    $$
    \overline{S}_N (t,x)= \frac{1}{N}\sum_{i=2}^{N}\int_{0}^{t}\left(  \mathrm{I}-\Delta\right)^{\frac{\beta}{2}} e^{\left(  t-s\right)  \Delta} \nabla \left(  
V_N \left( x-X_{s}^{i} \right)  \right) \,dW_{s}^{i},$$ then 
 \begin{eqnarray}
 \label{eq:S_N_estimate}
     \left\| {S}_N\right\|_{L^m(\Omega; C([0,T];H^{\beta,z}(\R^d)))} =    \left\|\overline{S}_N\right\|_{L^m(\Omega; C([0,T];L^z(\R^d)))}   &\le& CN^{-1/2+\alpha(\beta+ d/z^\prime)+\varepsilon}.
     \end{eqnarray}
     Hence, if  the mesocale rescaling parameter $\alpha$ is such that $
0<\alpha<[2( \beta +  d/z^\prime) ]^{-1},$
where $z^\prime$ is  conjugate exponent of $z$, then we get the finite bound in \eqref{eq:S_N_estimate}.
\end{corollary}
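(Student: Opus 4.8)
The plan is to mimic the proof of \eqref{eq:M_N_estimate} in Lemma \ref{Stime: ConInt 3}, replacing the role of the space $L^z(\R^d)$ by the Bessel potential space $H^{\beta,z}(\R^d)$ and exploiting the identity $\|f\|_{H^{\beta,z}(\R^d)}=\|(\mathrm I-\Delta)^{\beta/2}f\|_{L^z(\R^d)}$. The first observation is the stated equality of the two $L^m(\Omega;C([0,T];\cdot))$ norms: applying $(\mathrm I-\Delta)^{\beta/2}$ to \eqref{eq:StoConInt}, using that this operator commutes with the heat semigroup $e^{(t-s)\Delta}$ and with the gradient, and that it may be brought inside the stochastic integral (it is a fixed deterministic Fourier multiplier acting on the spatial variable, independent of the integration variable $s$ and of $\omega$), one gets that $(\mathrm I-\Delta)^{\beta/2}S_N(t,\cdot)=\overline S_N(t,\cdot)$ pathwise, hence the first equality in \eqref{eq:S_N_estimate}.

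Next I would run the Garsia--Rodemich--Rumsey argument of \cite{2023_oliveira_richard_tomasevic,2010_Friz} on $\overline S_N$ directly, viewing it as a stochastic convolution with the modified kernel $(\mathrm I-\Delta)^{\beta/2}e^{(t-s)\Delta}\nabla V_N(\,\cdot-X^i_s)$. The two ingredients needed are: (i) a time-increment estimate in $L^m(\Omega;L^z(\R^d))$ of the form $\|\overline S_N(t,\cdot)-\overline S_N(s,\cdot)\|_{L^m(\Omega;L^z)}\le C|t-s|^{\delta/2}N^{-\theta}$ for some $\delta\in(0,1]$, which is exactly \eqref{eq:MN_difference_estimate} read with $\beta$ in place of $0$; and (ii) control of the $N$-dependence of the constant. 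For (ii), after the BDG step for the (genuine, because now $t$ is fixed) martingale and Minkowski's integral inequality, one is left with spatial $L^z$-norms of $(\mathrm I-\Delta)^{\beta/2}e^{\sigma\Delta}\nabla V_N$, which by \eqref{In:BesselHeat}, \eqref{eq:stima_Bessel_semigroup} and the scaling relations \eqref{eq:VN_Hbetar} for $\|\nabla V_N\|_{H^{\beta,z}}$ produce a power $N^{\alpha(d+1+\beta-d/z)}=N^{\alpha(\beta+d/z^\prime+1)}$, together with a time factor $\sigma^{-(1+\beta)/2}$ that is integrable near $0$; the prefactor $1/N$ from the empirical average and the independence of the $N$ Brownian motions give the extra $N^{-1/2}$ (the $L^m$-norm of a sum of $N$ independent mean-zero terms of comparable size scales like $N^{1/2}$ after the BDG/Burkholder bound). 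Collecting the exponents and absorbing the $\varepsilon$ from the GRR continuity parameter yields the bound $CN^{-1/2+\alpha(\beta+d/z^\prime)+\varepsilon}$ in \eqref{eq:S_N_estimate}.

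Finally, the last assertion is immediate: the exponent $-1/2+\alpha(\beta+d/z^\prime)$ is strictly negative precisely when $\alpha<[2(\beta+d/z^\prime)]^{-1}$, and then choosing $\varepsilon$ small enough that $-1/2+\alpha(\beta+d/z^\prime)+\varepsilon<0$ makes the right-hand side of \eqref{eq:S_N_estimate} finite (indeed vanishing as $N\to+\infty$), uniformly in $N\in\mathbb N$.

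I expect the main obstacle to be the bookkeeping of the Bessel exponents rather than any new conceptual difficulty: one has to check carefully that moving $(\mathrm I-\Delta)^{\beta/2}$ past the stochastic integral is legitimate (it is, since the operator is deterministic, bounded on each fixed $L^z$ after composition with $e^{\sigma\Delta}$, and the integrand is predictable and square-integrable for fixed $N$ by Proposition \ref{prop:uN_regularity} and Remark \ref{remark:not_uniform_estimates_V_N}), and that the GRR-type lemma of \cite[Appendix A]{2010_Friz} applies verbatim in the complete metric space $H^{\beta,z}(\R^d)$ with the continuity modulus furnished by the increment estimate \eqref{eq:MN_difference_estimate} with parameter $\beta$. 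No uniformity in $N$ is claimed beyond what the stated $\alpha$-restriction provides, so the argument does not require any estimate stronger than those already collected in Lemma \ref{Stime: ConInt 3}.
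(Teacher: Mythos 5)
Your overall route is the paper's own: the commuting identity $(\mathrm{I}-\Delta)^{\beta/2}S_N(t,\cdot)=\overline S_N(t,\cdot)$ (which gives the equality of norms), a time-increment estimate in the Bessel norm, and a Garsia--Rodemich--Rumsey/Kolmogorov argument to pass to the supremum in time; this is precisely what the paper means by ``following the same arguments used for \eqref{eq:M_N_estimate}'', and it is carried out in detail in Proposition \ref{Prop:BesselMNEst} of the appendix. In fact your ingredient (i) alone already suffices: \eqref{eq:MN_difference_estimate} is stated directly in $H^{\beta,z}(\R^d)$ with an explicit, $N$-uniform constant and exponent $-\tfrac12+\alpha(\beta+d/z'+2\delta)$, so GRR plus the choice $\delta\sim\varepsilon/(2\alpha)$ (and Jensen to go from large $m$, needed for the GRR corollary, to all $m\ge1$) gives \eqref{eq:S_N_estimate}.

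The gap is in your ingredient (ii), which you present as necessary and whose exponent bookkeeping does not deliver the stated bound. If you land the gradient and the Bessel weight on the mollifier, \eqref{eq:VN_Hbetar} gives $\|\nabla V_N\|_{H^{\beta,z}(\R^d)}\sim N^{\alpha(d+1+\beta-d/z)}=N^{\alpha(1+\beta+d/z')}$, and collecting exponents yields $N^{-1/2+\alpha(1+\beta+d/z')}$, which is weaker than \eqref{eq:S_N_estimate} by a factor $N^{\alpha}$ (no small $\varepsilon$ recovers it, and it would force the stricter range $\alpha<[2(1+\beta+d/z')]^{-1}$). If instead you let the semigroup absorb $\nabla$ and $(\mathrm{I}-\Delta)^{\beta/2}$, the factor $\sigma^{-(1+\beta)/2}$ you quote enters \emph{squared} in the BDG quadratic variation, and $\int_s^t(t-u)^{-(1+\beta)}\,du$ diverges for every $\beta\ge0$; you cannot simultaneously have the $N^{\alpha(1+\beta+d/z')}$ power and that time factor, as your sketch suggests. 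The correct accounting, as in Proposition \ref{Prop:BesselMNEst}, is a splitting: first Sobolev-embed into the $L^2$ scale ($H^{d/2+\beta-d/z,2}(\R^d)\hookrightarrow H^{\beta,z}(\R^d)$, $z\ge2$), then keep only $(\mathrm{I}-\Delta)^{-\delta/2}\nabla e^{(t-u)\Delta}$ on the semigroup side --- cost $(t-u)^{(\delta-1)/2}$, whose square is integrable --- and put $(\mathrm{I}-\Delta)^{(d/2+\beta-d/z+\delta)/2}$ on $V_N$, whose scaling gives $N^{\alpha(d/2)}\cdot N^{\alpha(d/2+\beta-d/z+\delta)}=N^{\alpha(\beta+d/z'+\delta)}$, with $\delta$ finally absorbed into $\varepsilon$ through the GRR parameter. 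With (ii) either dropped (relying on \eqref{eq:MN_difference_estimate}) or repaired along these lines, your argument coincides with the paper's.
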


\section{Particle system and PDE with a uniformly regularized drift} \label{Sec:SDE-PDE-Uniformly-regularized-drift}

The final scope of this work is to establish the convergence in probability of the empirical density $u_N$ to the mild solution of the PDE \eqref{PDE: FP1} as the number of particles $N\in \mathbb N$ increases to infinity; that is, for any $\eta > 0$
$$\lim_{N\to +\infty}\mathbb{P}\left(\|u_N-u\|_{C_T^r}>\eta\right)=0.$$ 
For this aim
one need to estimates  moments   in the space $C([0,T];L^1\cap L^r(\R^d))$ uniformly in $N$. However, such an estimate is prevented by the singularity of the drift term in the particle system \eqref{SDE:Meso2}, since from Remark \ref{remark:not_uniform_estimates_V_N} it derives that possible bound of the drift are always dependent upon $N$. To overcome this difficulty, we apply a cut-off procedure of the drift and an auxiliary particle system, as often done in literature \cite{2014_Jabin,2023_oliveira_richard_tomasevic,2025_MRU_arxiv}.   
The cut-off introduced here aim to  to address two different requirements.
On the one hand, from \eqref{eq:Ckd_K*f} and Theorem \ref{Teo: EUMild} the force field associated with the limiting density $u$
satisfies the \emph{a priori} bound
\[
\|K * u\|_{L^\infty([0,T]\times \mathbb{R}^d)}
\;\le\;
C_{1,\nu}\, C_{K,p,q}\, \|u\|_{C_T^r},
\]
which provides a natural scale for the interaction drift.
On the other hand, the cut-off is chosen so as to ensure that the empirical
density $u_N$ remains uniformly close to $u$, independently of $N$, in the
corresponding norm.
Accordingly, a natural choice for the threshold
is the following
$$ B_{\eta,u}:=C_{1,\nu}C_{K,p,q}\left(\eta+ \|u\|_{C_T^r}\right).$$

\begin{definition}[$u$-dependent cut-off function]\label{def_f_eta}

We define a $u$-dependent cut-off function $f_\eta:\R^d\to \R^d$ as a function that possesses continuous and bounded derivatives up to order two, hence $f_\eta\in C^2_b(\R^d)$ such that, for any $i\in \{1,\dots, d\}$ 
 $$f_{\eta}(x)_i=f(x_i),$$
 where $f\in C^2_b(\R)$ satisfies the following conditions
\begin{enumerate}[label=\roman*)]
    \item $f(y)=y, $ for $|y|\le  B_{\eta,u} $;
   \item $f(y)=sgn(y) B_{\eta,u}$,  for $|y|\ge B_{\eta,u}+\bar{\eta}$, $\bar{\eta}>0$;
   \item $\|f^\prime\|_{\infty}\le 1$.
\end{enumerate}

\end{definition}
\begin{remark}
    Under these conditions, it follows that $\|f\|_{\infty} \le B_{\eta,u} + \bar{\eta}$.
\end{remark}
\begin{definition}[Cut-off interaction drift] Given the Lennard-Jones force $K$, we define the cut-off interaction drift an $\mathbb R^d$-valued  function $b_\eta$ such that, for  any function $v$ and any $x\in \mathbb R^d$
\begin{equation}\label{eq:def_b_eta}
b_{\eta}(v,x)= f_\eta\left( K*v (x)\right),
\end{equation}
with $f_\eta$ as in Definition \eqref{def_f_eta}.
\end{definition}

\begin{remark}[Regular drift PDE and SDE]\label{Re:Regular_drift} The cut-off dynamics coincides with the original one
on the region where both the limiting force field and its particle
approximation are controlled, while guaranteeing global regularity of the
cut-off particle system.
  Indeed, for any $i \in \{1,\cdots,d\}$, and $(t,x)\in [0,T] \times \R^d,$     $$ \left|K \ast u(t,x)\right|_i\le \|K\ast u(t,x)\|_{L^\infty(\R^d)}\le C_{1,\nu}C_{K,p,q} \|u\|_{C_T^r},$$
  therefore, for any $i$-th drift component out of $d$ 
  \begin{equation*}\label{eq:f_eta_PDE}
     f_\eta(K \ast u(t,x))_i=K \ast u(t,x)_i.
       \end{equation*}
Hence, if we consider the auxiliary cut-off PDE and SDE corresponding to the original PDE \eqref{PDE: FP1} and SDE \eqref{eq:SDE_MKV_1}, respectively, the condition Remark \ref{eq:f_eta_PDE} guarantees that they coincide  with the original ones. By contrast, for a  possible associated particle system the cut-off acts upon the dynamics.
    
\end{remark}

Let us now introduce an auxiliary particle process  $(\widetilde{X}^1,\ldots,\widetilde{X}^N) $ the $N$-dimensional stochastic process in $(\mathbb R^{Nd},\mathcal{B}_{\mathbb R^{Nd}}),$  solution of the SDE system 
driven by the cut-off  drift \eqref{eq:def_b_eta}, that is
\begin{equation}\label{SDE:MesoReg}
d\widetilde{X}_t^{i}= b_{\eta}(\widetilde{u}_N(t,\cdot),\widetilde{X}^i_t)  , dt + \sqrt{2} \,dW_{t}^{i}, \quad t\in[0, T], \quad 1 \le i \le N,
\end{equation}
where $\widetilde{u}_N(t,\cdot)$ is   the corresponding 
empirical density \eqref{eq:def_empirical_density} with  $\widetilde{\mu}_N(t)$ the associated empirical measure. Hence,    the empirical density $\widetilde{u}_N$   satisfies the following mild formulation 
\begin{equation}\label{MildFormula u2}
\widetilde{u}_N(t,x)=e^{t\Delta }u^N_0(x) - \int_0^t \nabla \cdot e^{(t-s)\Delta } \left[  V_{N} \ast  b_{\eta}(\widetilde{u}_N(s,\cdot), \cdot) \widetilde{\mu}_N(s) \right](x)   \ ds + \widetilde{S}_N(t,x),
\end{equation}
with \begin{equation}\label{eq:StoConInt2}
    \widetilde{S}_N(t,x):=-\frac{1}{N}\sum_{i=1}^N\int_0^te^{(t-s)\Delta}\nabla V_N(x-\widetilde{X}_s^i) \ dW^{i}_s. 
\end{equation}

\begin{remark}
   We note that since the  drift is Lipschitz continuous and uniformly bounded, by standard argument the particle system \eqref{SDE:MesoReg} admits a strong pathwise unique solution.
 \end{remark}

\begin{proposition}\label{prop:bound1}  
Let $m \geq 1$, $\beta\in [0,1)$, and $d\ge2$; let $r$  be as in Theorem \ref{Teo: EUMild} and  $r^\prime$ its conjugate exponent. Suppose that  one of the  constraints $H_i, i=1,2,3$ in Proposition \ref{Prop:MK_strong_ex_un} is satisfied. If  the initial condition \eqref{Initial condition density}  and the mesocale rescaling parameter range is such that 
\begin{equation}\label{eq:alfaBessel}
0<\alpha<\frac{1}{2( \beta +  d/r^\prime)},
\end{equation}  
then, the following bound holds
\begin{equation*}\label{St: Bessel u}
 \sup_{N \in \mathbb{N}}   \left\| \widetilde{u}_N \right\|_{L^m\left(\Omega;C([0,T]; H^{\beta,r}(\mathbb{R}^d)\right))}   <+\infty.
\end{equation*}
\end{proposition}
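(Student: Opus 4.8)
The plan is to run a singular Grönwall argument on $\Phi_N(t):=\|\widetilde u_N(t,\cdot)\|_{H^{\beta,r}(\R^d)}$, starting from the mild formulation \eqref{MildFormula u2}, in which the stochastic convolution $\widetilde S_N$ appears as an inhomogeneous term controlled uniformly in $N$ — precisely under \eqref{eq:alfaBessel} — by Corollary \ref{cor:est_stochastic_convolution_bessel}. The constraints $H_i$ of Proposition \ref{Prop:MK_strong_ex_un} enter only to guarantee, via Theorem \ref{Teo: EUMild} and Proposition \ref{Estimate:u}, that the limiting density lies in $C([0,T];L^1\cap H^{\beta,r}(\R^d))$ with the same exponents, so that $\|u\|_{C_T^r}<\infty$, the threshold $B_{\eta,u}$ is finite, and the cut-off drift $b_\eta$ of Definition \ref{def_f_eta} and \eqref{eq:def_b_eta} is well defined; from this the only quantitative fact retained is the $N$-independent bound $\|b_\eta(v,\cdot)\|_{L^\infty(\R^d)}\le B_{\eta,u}+\bar\eta$, valid for every $v$. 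For fixed $N$ and $\omega$, since $V_N\in C^2_c(\R^d)$ and the $\widetilde X^i$ are continuous, $\widetilde u_N(t,\cdot)$ is a finite sum of smooth compactly supported functions, so $\Phi_N$ is finite and continuous on $[0,T]$, which makes the Grönwall step below legitimate.

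First I would apply $(\mathrm I-\Delta)^{\beta/2}$ to \eqref{MildFormula u2}, take $L^r(\R^d)$-norms, and split into three pieces. The initial term is handled at once: $e^{t\Delta}$ commutes with $(\mathrm I-\Delta)^{\beta/2}$ and is an $L^r$-contraction, so $\|e^{t\Delta}u^N_0\|_{H^{\beta,r}}\le\|u_N(0,\cdot)\|_{H^{\beta,r}}$, which by \eqref{Initial condition density} is bounded in $L^m(\Omega)$ uniformly in $N$. The stochastic term is exactly the object of Corollary \ref{cor:est_stochastic_convolution_bessel} applied to $\widetilde S_N$ — the proof there uses only adaptedness of the driving processes and the scaling of $V_N$, hence applies verbatim with $\widetilde X^i$ in place of $X^i$ — giving $\|\widetilde S_N\|_{L^m(\Omega;C([0,T];H^{\beta,r}))}\le C N^{-1/2+\alpha(\beta+d/r^\prime)+\varepsilon}$, whose exponent is negative for $\varepsilon$ small under \eqref{eq:alfaBessel}, so this term too is bounded uniformly in $N$.

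The core is the drift term. Writing $b_\eta(\widetilde u_N(s,\cdot),\cdot)\widetilde\mu_N(s)=\tfrac1N\sum_i b_\eta(\widetilde u_N(s,\cdot),\widetilde X^i_s)\,\varepsilon_{\widetilde X^i_s}$ and using $V_N\ge0$ together with $|b_\eta(\widetilde u_N(s,\cdot),\widetilde X^i_s)|\le B_{\eta,u}+\bar\eta$ componentwise, one obtains the pointwise domination
\[
\bigl|\,V_N\ast\bigl(b_\eta(\widetilde u_N(s,\cdot),\cdot)\widetilde\mu_N(s)\bigr)(x)\,\bigr|\;\le\;(B_{\eta,u}+\bar\eta)\,\tfrac1N\sum_i V_N(x-\widetilde X^i_s)\;=\;(B_{\eta,u}+\bar\eta)\,\widetilde u_N(s,x),
\]
so the vector field inside the drift integral has $L^r$-norm at most $C(B_{\eta,u}+\bar\eta)\|\widetilde u_N(s,\cdot)\|_{L^r}\le C(B_{\eta,u}+\bar\eta)\|\widetilde u_N(s,\cdot)\|_{H^{\beta,r}}$. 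Moving $\nabla\cdot$ and $(\mathrm I-\Delta)^{\beta/2}$ onto the semigroup and combining \eqref{eq:heat_semigroup_estimate} with \eqref{eq:stima_Bessel_semigroup}--\eqref{In:BesselHeat} produces a singular factor $(t-s)^{-(1+\beta)/2}$, integrable since $\beta<1$. Collecting the three bounds yields, pathwise,
\[
\Phi_N(t)\;\le\; a_N \;+\; C_*\int_0^t (t-s)^{-\frac{1+\beta}{2}}\,\Phi_N(s)\,ds,\qquad a_N:=\|u_N(0,\cdot)\|_{H^{\beta,r}}+\sup_{s\in[0,T]}\|\widetilde S_N(s,\cdot)\|_{H^{\beta,r}},
\]
with $C_*$ depending on $d,\beta,C_\Delta,C_\Delta^\prime$ and $B_{\eta,u}+\bar\eta$, but not on $N$.

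Finally I would invoke the singular Grönwall lemma \cite[Lemma 7.1.1]{Henry_1981} (exactly as in Proposition \ref{Estimate:u}) to get $\sup_{t\in[0,T]}\Phi_N(t)\le a_N\,E_{\frac{1-\beta}{2},1}(C_*^\prime T)$ with $C_*^\prime$ independent of $N$, then take $L^m(\Omega)$-norms and use Minkowski's inequality:
\[
\|\widetilde u_N\|_{L^m(\Omega;C([0,T];H^{\beta,r}(\R^d)))}\;\le\;E_{\frac{1-\beta}{2},1}(C_*^\prime T)\Bigl(\|u_N(0,\cdot)\|_{L^m(\Omega;H^{\beta,r})}+\|\widetilde S_N\|_{L^m(\Omega;C([0,T];H^{\beta,r}))}\Bigr),
\]
and both terms on the right are bounded uniformly in $N$ by \eqref{Initial condition density} and by Corollary \ref{cor:est_stochastic_convolution_bessel} under \eqref{eq:alfaBessel}. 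The main obstacle — and the reason the cut-off is indispensable — is precisely the drift estimate: the naive bound $\|V_N\ast(\cdot)\|_{L^r}\lesssim\|V_N\|_{L^r}\sim N^{\alpha d/r^\prime}$ diverges with $N$, and it is only the combination of the $N$-uniform $L^\infty$-bound on $b_\eta$ with the positivity of the mollifier that allows one to replace $\|V_N\|_{L^r}$ by $\|\widetilde u_N(s,\cdot)\|_{L^r}$, turning the estimate into a self-improving singular Grönwall inequality instead of an $N$-dependent one.
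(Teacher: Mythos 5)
Your proposal is correct and follows essentially the same route as the paper's proof: the mild formulation \eqref{MildFormula u2} split into initial, stochastic and drift contributions, the pointwise domination of $V_N\ast\bigl(b_\eta(\widetilde u_N,\cdot)\widetilde\mu_N\bigr)$ by $(B_{\eta,u}+\bar\eta)\,\widetilde u_N$ via positivity of $V_N$ and the uniform bound on the cut-off drift, the semigroup estimates producing the $(t-s)^{-(1+\beta)/2}$ singularity, the singular Gr\"onwall lemma of Henry, and finally the $L^m(\Omega)$-norm closed by \eqref{Initial condition density} and Corollary \ref{cor:est_stochastic_convolution_bessel} under \eqref{eq:alfaBessel}. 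The only differences are cosmetic (you state the Mittag--Leffler bound directly on the supremum rather than via its derivative, and you make explicit the pathwise finiteness justifying Gr\"onwall), so the argument matches the paper's.
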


 \begin{proof}

 By applying the $H^{\beta,r}(\R^d)$ norm to \eqref{MildFormula u2}, we have
\begin{align*}
\left\Vert \left(  \mathrm{I}-\Delta\right)^{\frac{\beta}{2}} \widetilde{u}_N(t,\cdot) \right\Vert _{L^{r}(\R^d) }  
 & \le  \left\Vert \left(  \mathrm{I}-\Delta\right)  ^{\frac{\beta}{2}}e^{t\Delta}u^N_0\right\Vert _{L^{r}(\R^d)} +\left\| \widetilde{S}_N(t,\cdot)\right\|_{ H^{\beta,r}(\R^d)}  \\
&  + \int_{0}^{t}\left\Vert \left( \mathrm{I} -\Delta\right)  ^{\frac{\beta}{2}}\nabla \cdot
e^{\left(  t-s\right)  \Delta}\left[ V_N \ast b_{\eta}(\widetilde{u}_N(s,\cdot), \cdot)\widetilde{\mu}_N(s)\right](x)   \right\Vert _{ L^{r}(\R^d)} \, ds  \\
&\le \|u_0^N\|_{H^{\beta,r}(\R^d)}  + \left\Vert\widetilde{S}_N(t,x)\right\Vert _{H^{\beta,r}(\R^d)}\\\ & +   C^\prime_\Delta \sqrt{d}\left(B_{\eta,u}+\bar{\eta} \right) \ \int_{0}^{t} \frac{1}{(t-s)^{\frac{(1+\beta)}{2}}} \ \left\|  
\widetilde{u}_N(s,\cdot)  \right\| _{H^{\beta, r}(\R^d)} \ ds. 
\end{align*}

Let us consider the following notations : $C^*=   C^\prime_\Delta \sqrt{d} \left(B_{\eta,u}+\bar{\eta} \right)$, ${I}(t)= \|u_0^N\|_{H^{\beta,r}(\R^d)}  + \left\Vert\widetilde{S}_N(t,x)\right\Vert _{L^{r}(\R^d)}$,  and $\theta=\left(C^* \Gamma\left(\frac{1-\beta}{2}\right)\right)^{2/(1-\beta)}$.  By using the Gr\"onwall's lemma for singular integrals we have that

\begin{equation*}
    \|\widetilde{u}_N(t,\cdot)\|_{H^{\beta,r}(\R^d)}\le  {I}(t) +\theta\int_0^t E_{\frac{1-\beta}{2},1}^\prime(\theta(t-s)) \, {I}(s) \, ds,
\end{equation*}
 where $E^\prime$ is the derivative of the Mittag–Leffler function \eqref{eq:Mittang-Leffler}.
Hence, 
\begin{equation*}
    \sup_{t \in [0,T] }\|\widetilde{u}_N(t,\cdot)\|_{H^{\beta,r}(\R^d)}\le \sup_{t \in [0,T]} {I}(t)\left( 1 +\theta\int_0^T E_{\frac{1-\beta}{2},1}^\prime(\theta(T-s))  \, ds\right).
\end{equation*}
By taking the $L^m(\Omega)$ norm, we observe that   by Corollary \ref{cor:est_stochastic_convolution_bessel}  and  hypothesis \eqref{eq:alfaBessel}, the term  $\bar{C}= \left(\|u_0^N\|_{H^{\beta,r}(\R^d)} + \left\|\sup_{t \in [0,T]}I(t)\right\|_{L^m(\Omega)}\right)$ is finite for any $N$. Consequently,  given also the \eqref{Initial condition density}, we get,  uniformly in $N\in \mathbb N$, the following bound

\begin{equation*}
    \|\widetilde{u}_N(t,\cdot) \|_{L^m\left(\Omega;C([0,T]; H^{\beta,r}(\mathbb{R}^d)\right))}\le \bar{C}\left( 1 +\theta\int_0^T E_{\frac{1-\beta}{2},1}^\prime(\theta(T-s))  \, ds\right) <+\infty.
\end{equation*}
Hence, the thesis is achieved.
\end{proof}

\begin{remark}\label{remark:alpha_singularity}
  Note that in Theorem \ref{Teo: EUMild}, and consequently in Proposition \ref{prop:bound1}, we require the regularity order $r$ of $u$   to satisfy   $r \ge p^\prime$. As a result $r^\prime \le p$, where $p$ denotes the order of the regularity of the singularity at the origin. The supremum of the admissible range of  $p$, given by condition i) in Proposition \ref{Prop:integrability_pq}, is of order $a^{-1}$, where $a$ denotes the order of the singularity of the Lennard-Jones force.  This allows us to provide  a clear interpretation of the  condition 
\begin{equation*}
0<\alpha<\frac{1}{2( \beta +  d/r^\prime)}.
\end{equation*} 
 This condition governs the effective interaction range between particles  and  is  directly related to the  singularity order of the interaction force. Indeed, the more singular the kernel (i.e. the larger the parameter $a$ in \eqref{eq:JL_potential_force}), the smaller is the  admissible range of  $p$, and hence of $r^\prime$, which in turn reduces  the supremum of  admissible values of $\alpha$. This behaviour is consistent with physical intuition: stronger singularities require a larger mesoscale in order to effectively control particle interactions and to obtain a sufficiently regular weighted average in the drift term.

\end{remark}

\section{Meso-Macroscale: convergence for the empirical density}\label{sec:convergence}

In the following, we aim to link the aggregative-repulsive dynamics at different scales: at the macroscale, the typical Brownian particle interacts with a field $u$, whose evolution is described by the Fokker-Planck PDE \eqref{PDE: FP1}, while at the microscale  a finite number $N \in \mathbb{N}$ of particles move randomly according to \eqref{SDE:Meso1} and pairwise interact at the mesoscale of order $\alpha$, with $\alpha $ satisfying \eqref{eq:alfaBessel}.  The link between these different scales is proved by showing the convergence in probability of the empirical particle density \eqref{eq:def_empirical_density} associated with the particle system to the unique mild solution of the Fokker–Planck equation \eqref{PDE: FP1}.

\subsection{The case of the regularized SDE: moment convergence}

First, we prove the convergence of the $m$-th moment of $\widetilde{u}_N-u$  in the space $C([0,T];L^1\cap L^r(\R^d))$. Moreover, we determine the rate of convergence with respect to the number of particles, which is linked to the mesoscale parameter.

\begin{theorem}[Quasi-continuity of the regularized particle system]\label{Teo:StrongConv} 
Let $d\ge 2$. Let $K$ be given by \ref{eq:JL_potential_force}, with $d-1> a>b>0$, and $r$ as in Theorem \ref{Teo: EUMild} and let $r^\prime$ its conjugate exponent. Let $u_0 \in L^1 \cap H^{\beta,r}(\R^d)$, with $\beta\in [0,1)$, and assume that the initial conditions  \eqref{Initial condition v.a.} and \eqref{Initial condition density} are satisfied.
Suppose that one of the constraints in Proposition \ref{Prop:MK_strong_ex_un} holds and that the mesoscale rescaling parameter range \eqref{eq:alfaBessel} is satisfied.

Let the dynamics of the particle system be given by \eqref{SDE:MesoReg}, let $\widetilde{u}_N$ be the empirical density \eqref{MildFormula u2} and $u$ be the mild solution of the PDE \eqref{PDE: FP1}.
Then, for any $\varepsilon>0$ and any $m\ge 1$, and given $\varrho  =  \min \left(\alpha ,\frac{1}{2}-  \frac{\alpha d}{r^\prime}  \right)$, there exists a constant $C^\prime>0$ such that for all $N\in\mathbb{N}$
\begin{equation}\label{quasi-continuity_mth_moment}
 \left\| \widetilde{u}_N-u \right\|_{L^m(\Omega; C_T^r)} 
\leq C^\prime  \left(\left\|u_N(0,\cdot)- u_0\right\|_{L^m(\Omega; L^1 \cap L^r(\R^d))} +  N^{-\varrho +\varepsilon}\right).
\end{equation} 
\end{theorem}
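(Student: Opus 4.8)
\emph{Proof strategy.} The plan is to subtract the Duhamel formula \eqref{eq: mild solution} for $u$ from the mild formulation \eqref{MildFormula u2} of $\widetilde{u}_N$, reduce the difference $\phi_N(t):=\|\widetilde{u}_N(t,\cdot)-u(t,\cdot)\|_{L^1\cap L^r(\R^d)}$ to a linear singular Volterra inequality with an explicit, $N$-dependent source, close it with the singular Gr\"onwall lemma \cite[Lemma 7.1.1]{Henry_1981}, and only afterwards take the $L^m(\Omega)$-norm. The decisive structural point is to split the nonlinearity so that every term that multiplies $\phi_N(s)$ carries either the cut-off drift $b_\eta(\widetilde{u}_N(s,\cdot),\cdot)$, uniformly bounded by $B_{\eta,u}+\bar{\eta}$ thanks to Definition \ref{def_f_eta}, or the deterministic density $u(s,\cdot)$; this keeps the Gr\"onwall constant \emph{deterministic}, while the only genuinely random, quadratic contribution is a mollification commutator that enters the estimate \emph{additively} and is controlled by Proposition \ref{prop:bound1}.

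By Remark \ref{Re:Regular_drift}, $u$ is also a mild solution of the cut-off Fokker--Planck equation, with drift $b(s,\cdot):=b_\eta(u(s,\cdot),\cdot)=K\ast u(s,\cdot)$, and $\widetilde{u}_N(0,\cdot)=u_N(0,\cdot)$ while $u(0,\cdot)=u_0$. Setting $\widetilde{b}_N(s,\cdot):=b_\eta(\widetilde{u}_N(s,\cdot),\cdot)$ and using $\widetilde{u}_N(s,\cdot)=V_N\ast\widetilde{\mu}_N(s)$, I would decompose
\begin{align*}
V_N\ast\!\big(\widetilde{b}_N(s,\cdot)\widetilde{\mu}_N(s)\big)-u(s,\cdot)\,b(s,\cdot)
&= E_N(s,\cdot)+\widetilde{b}_N(s,\cdot)\big(\widetilde{u}_N-u\big)(s,\cdot)\\
&\quad +\big(\widetilde{b}_N-b\big)(s,\cdot)\,u(s,\cdot),
\end{align*}
where $E_N(s,\cdot):=V_N\ast\!\big(\widetilde{b}_N(s,\cdot)\widetilde{\mu}_N(s)\big)-\widetilde{b}_N(s,\cdot)\big(V_N\ast\widetilde{\mu}_N(s)\big)$. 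Using $\|\nabla e^{\tau\Delta}\|_{L^z\to L^z}\le C_\Delta\tau^{-1/2}$ from \eqref{eq:heat_semigroup_estimate} for $z\in\{1,r\}$, the $L^1$- and $L^r$-contractivity of $e^{t\Delta}$ on the initial datum, the bounds $\|\widetilde{b}_N(s,\cdot)\|_{L^\infty(\R^d)}\le B_{\eta,u}+\bar{\eta}$ and $\|b(s,\cdot)\|_{L^\infty(\R^d)}\le C_{1,\nu}C_{K,p,q}\|u\|_{C_T^r}$ (from \eqref{eq:Ckd_K*f}), and $\|\widetilde{b}_N(s,\cdot)-b(s,\cdot)\|_{L^\infty(\R^d)}\le\|K\ast(\widetilde{u}_N-u)(s,\cdot)\|_{L^\infty(\R^d)}\le C\,\phi_N(s)$ — a consequence of $\|f_\eta\|_{\mathrm{Lip}}\le 1$, Proposition \ref{prop:LJ_con_op}(ii) and the embedding $L^1\cap L^r\hookrightarrow L^1\cap L^{p^\prime}$ (since $p^\prime\le r$) — I obtain, for $t\in[0,T]$,
$$
\phi_N(t)\le \mathcal A_N + C_0\int_0^t\frac{\phi_N(s)}{\sqrt{t-s}}\,ds + \sqrt{d}\,C_\Delta\int_0^t\frac{\|E_N(s,\cdot)\|_{L^1\cap L^r(\R^d)}}{\sqrt{t-s}}\,ds,
$$
with $C_0$ deterministic (depending on $d,\nu,a,b,T,\|u\|_{C_T^r},B_{\eta,u},\bar{\eta}$) and $\mathcal A_N:=\|u_N(0,\cdot)-u_0\|_{L^1\cap L^r(\R^d)}+\sup_{\tau\le T}\|\widetilde{S}_N(\tau,\cdot)\|_{L^1\cap L^r(\R^d)}$.

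For the commutator I would use that $V_N$ is supported in a ball of radius $c_V N^{-\alpha}$ together with $\|f_\eta\|_{\mathrm{Lip}}\le1$, which gives the pointwise bound $|E_N(s,x)|\le c_V N^{-\alpha}\,|K\ast\widetilde{u}_N(s,\cdot)|_1\,\widetilde{u}_N(s,x)$; since $\|\widetilde{u}_N(s,\cdot)\|_{L^1(\R^d)}=1$ and $|K\ast v|_1\le C\|v\|_{L^1\cap H^{\beta,r}(\R^d)}$ by Propositions \ref{Prop:HoderSpaceSubCri}, \ref{Prop:Grad_K2_Bessel} and \ref{Prop:HoderSpaceSuperCri} (according to which of $H_1,H_2,H_3$ is assumed — note that $r>d$, hence $r>2$, in all three cases), this yields $\|E_N(s,\cdot)\|_{L^1\cap L^r(\R^d)}\le C N^{-\alpha}\|\widetilde{u}_N(s,\cdot)\|_{L^1\cap H^{\beta,r}(\R^d)}^2$. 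Bounding the last integral by $\sup_{s\le T}\|E_N(s,\cdot)\|_{L^1\cap L^r(\R^d)}\int_0^t(t-s)^{-1/2}\,ds$ and applying the singular Gr\"onwall lemma produces a deterministic $C_T$ with
$$
\sup_{t\le T}\phi_N(t)\le C_T\Big(\mathcal A_N + N^{-\alpha}\sup_{s\le T}\|\widetilde{u}_N(s,\cdot)\|_{L^1\cap H^{\beta,r}(\R^d)}^2\Big).
$$
Taking $\|\cdot\|_{L^m(\Omega)}$, using $\|X^2\|_{L^m(\Omega)}=\|X\|_{L^{2m}(\Omega)}^2$ with Proposition \ref{prop:bound1} (applied with exponent $2m$ and combined with $\|\widetilde{u}_N(t,\cdot)\|_{L^1(\R^d)}=1$) for the commutator term, and Lemma \ref{Stime: ConInt 3} — estimate \eqref{eq:M_N_estimate} with $z=1$ and $z=r$, which applies verbatim to $\widetilde{S}_N$ since that bound is insensitive to the law of the driving particles — for the stochastic convolution, one arrives at \eqref{quasi-continuity_mth_moment}: the two resulting $N$-rates are $N^{-\alpha}$ (commutator) and $N^{-(1/2-\alpha d/r^\prime)+\varepsilon}$ (stochastic convolution, since $r>2$ makes $1-\alpha(d+d\kappa_r)=2(1/2-\alpha d/r^\prime)$ with $\kappa_r=1-2/r$, the $L^1$-component being strictly faster), so that $\varrho=\min\!\big(\alpha,\tfrac12-\alpha d/r^\prime\big)$; positivity of $\varrho$ is guaranteed since \eqref{eq:alfaBessel} forces $\alpha<r^\prime/(2d)$.

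The main obstacle is the first step, i.e. choosing the splitting that keeps the Gr\"onwall coefficient deterministic. A crude estimate would instead multiply $\phi_N(s)$ by the Lipschitz seminorm $|K\ast\widetilde{u}_N(s,\cdot)|_1\le C\|\widetilde{u}_N(s,\cdot)\|_{L^1\cap H^{\beta,r}(\R^d)}$, which is only bounded in $L^m(\Omega)$; after the singular Gr\"onwall this would generate an uncontrollable factor of the form $\mathbb E\big[\exp\!\big(c\,\sup_{s\le T}\|\widetilde{u}_N(s,\cdot)\|_{L^1\cap H^{\beta,r}(\R^d)}^{2}\big)\big]$. Exploiting instead the uniform \emph{boundedness} (rather than the Lipschitz norm) of the cut-off drift — which is precisely the purpose of the $u$-dependent truncation in Definition \ref{def_f_eta} — removes this obstruction and confines the use of the $H^{\beta,r}$-regularity of $\widetilde{u}_N$ to the harmless additive commutator term.
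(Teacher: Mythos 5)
Your proposal is correct and follows essentially the same route as the paper: the same splitting of the nonlinearity into the mollification commutator (the paper's term $F$), the piece $b_\eta(\widetilde u_N)(\widetilde u_N-u)$ controlled by the uniform bound $B_{\eta,u}+\bar\eta$, and the piece $u\,(b_\eta(\widetilde u_N)-b_\eta(u))$ controlled by the Lipschitz property of $f_\eta$ and Proposition \ref{prop:LJ_con_op}, followed by the singular Gr\"onwall lemma, Proposition \ref{prop:bound1} for the commutator and Lemma \ref{Stime: ConInt 3} for $\widetilde S_N$, giving the same two rates $N^{-\alpha}$ and $N^{-1/2+\alpha d/r'+\varepsilon}$. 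The only deviations are harmless technical ones (e.g.\ bounding the commutator's time integral by a supremum in time instead of the paper's H\"older--Jensen step).
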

\begin{proof}

Let us first note that in the case that one of the constraints $H_2$ or $H_3$ is satisfied,   since $H^{\beta,r}(\mathbb{R}^d)$ is continuously embedded into $L^r(\mathbb{R}^d)$, it follows by interpolation that $L^1 \cap H^{\beta,r}(\mathbb{R}^d) $ is continuously embedded into  $L^1 \cap L^r(\mathbb{R}^d) $.  Thus we may  apply Theorem \ref{Teo: EUMild} also in this cases. Let $u$ be the unique mild solution of order $r$ to equation \eqref{PDE: FP1} on $[0,T]$. 

From  \eqref{MildFormula u2} for $\widetilde{u}_N$ and \eqref{eq: mild solution} for $u$, it comes
\begin{align*}
\widetilde{u}_N(t,x) - u(t,x) &= e^{t\Delta} (u^N_{0} - u_{0})(x) -\frac{1}{N}\sum_{i=1}^N \int_0^t e^{(t-s)\Delta} \nabla V_N (x-\widetilde{X}_s^{i}) \ dW^i_s\\
& - \int_{0}^t \nabla \cdot e^{(t-s)\Delta } \left( \langle \widetilde{\mu}_N(s),  V_N(x-\cdot)   b_{\eta}(\widetilde{u}_N(s,\cdot), \cdot) \rangle - u(s,\cdot)  (K\ast u(s,x))  \right) \ ds,
\end{align*}
where we consider the notation
$$
\langle \mu,f\rangle  =\int_{\mathbb R^d} f(x)\mu(dx).
$$
By adding and subtracting the term 
\begin{equation*}
    \int_{0}^t \nabla\cdot e^{(t-s)\Delta}   \langle \widetilde{\mu}_N(s),  V_N (x-\cdot) b_{\eta}(\widetilde{u}_N(s,x), x) \rangle \ ds=\int_{0}^t \nabla\cdot e^{(t-s)\Delta} \,  \widetilde{u}_N(s,x)\, b_{\eta}(\widetilde{u}_N(s,x), x) \ ds,
\end{equation*}
 we get
\begin{align*}
\widetilde{u}_N(t,x) - u(t,x) &= e^{t\Delta} (u^N_{0} - u_{0})(x) + \widetilde{S}_N(t,x) +  F(t,x)\\
&+ \int_0^t \nabla \cdot e^{(t-s)\Delta } \left(u(s,\cdot)  (K\ast u(s,\cdot)) - \widetilde{u}_N(s,\cdot) b_{\eta}(\widetilde{u}_N(s,x), x)\right)  ds,
\end{align*}
where  $\widetilde{S}_N $ denotes the stochastic convolution integral \eqref{eq:StoConInt2} and  for any $(t,x)\in [0,T]\times \mathbb R^d$
\begin{align}\label{eq:ForceTerm}
F(t,x)&:= \int_0^t \nabla \cdot e^{(t-s)\Delta } \langle \widetilde{\mu}_N(s),  V_N (x-\cdot) \left(b_{\eta}(\widetilde{u}_N(s,x), x)-b_{\eta}(\widetilde{u}_N(s,\cdot), \cdot)\right)\rangle \  ds.
\end{align}
Let  $\eta \in \R_+$ be fixed and  $z\in\{1,r\}$. 
By  \eqref{eq:heat_semigroup_estimate}, one has
\begin{equation}\label{eq:uN-u_proof_1}
\begin{split}
\| \widetilde{u}_N(t,\cdot) &-u(t,\cdot) \|_{L^{z}(\R^d)} \le \|e^{t\Delta }(u^N_0- u_0 )\|_{L^{z}(\R^{d})}  + \left\| F(t,\cdot)\right\|_{L^{z}(\R^d)}  + \left\|  \widetilde{S}_N(t,\cdot)\right\|_{L^{z}(\R^{d})}\\
&  + C_\Delta \int_0^t \frac{1}{\sqrt{t-s}} \|(u(s,\cdot)  \left(K\ast u(s,\cdot)\right) - \widetilde{u}_N(s,\cdot) b_{\eta}(\widetilde{u}_N(s,\cdot), \cdot) ) \|_{L^{z}(\R^d)}  ds\\
&\le \|u^N_0- u_0 \|_{L^{z}(\R^{d})}  + \left\| F(t,\cdot)\right\|_{L^{z}(\R^d)}  + \left\|  \widetilde{S}_N(t,\cdot)\right\|_{L^{z}(\R^{d})}\\
&  + A(t)  .
\end{split}
\end{equation}
First of all let us estimate the second norm at right hand side of \eqref{eq:uN-u_proof_1}. Using \eqref{eq:heat_semigroup_estimate}  and the 
positivity of $V_N$, we get
\begin{align*}
\| F(t,\cdot)\|_{L^{z}(\R^d)} 
&\le dC_\Delta 
\int_0^{t}\frac{1}{\sqrt{t-s}} \left(\int_{\R^d} \langle\widetilde{\mu}_N(s),V_N (x-\cdot) \left| b_{\eta}(\widetilde{u}_N(s,\cdot), \cdot)- b_{\eta}(\widetilde{u}_N(s,x), x) \right|\rangle^{z} dx\right)^{\frac{1}{z}} \ ds.
\end{align*}
Using the Lipschitz continuity of the drift function $b_\eta$ and of $K\ast \widetilde{u}_N$ from Proposition \ref{Prop:HoderSpaceSubCri}, case $a.$ when $a\in (0,d-2)$; or Proposition \ref{Prop:Grad_K2_Bessel} when $a=d-2$; or Proposition \ref{Prop:HoderSpaceSuperCri} for $a\in (d-2,d-1)$. Since $V$ is compactly supported, without loss of generality, we can assume that the support of $V$ is included in the ball centred at the origin with radius $\nu$, we have that $V_N (x-y)\left|y-x\right| \le \nu N^{-\alpha} V_N (x-y)$, we get 
\begin{align*}
\| F(t,\cdot)\|_{L^{z}(\R^d) } &\le dC_{\Delta,K,p,z} 
\int_0^{t} \frac{\| \widetilde{u}_N(s,\cdot)\|_{L^1 \cap H^{\beta,r}(\R^d)}}{\sqrt{t-s}} \left(\int_{\R^d} \langle \widetilde{\mu}_N(s),V_N (x-\cdot) \left|\cdot-x\right|\rangle^{z} \  dx\right)^{\frac{1}{z}} \ ds\\ 
&\leq\frac{dC_{\Delta,K,p,z} \, \nu}{N^{\alpha}} 
\int_0^{t}\frac{\| \widetilde{u}_N(s,\cdot)\|_{L^1 \cap H^{\beta,r}(\R^d)} }{\sqrt{t-s}} \| \widetilde{u}_N(s,\cdot)\|_{L^{z}(\R^d)} \ ds.
\end{align*}

Again, since $H^{\beta,r}(\mathbb{R}^d)$ is continuously embedded into $L^r(\mathbb{R}^d)$, it follows by interpolation that $L^1 \cap H^{\beta,r}(\mathbb{R}^d) $ is continuously embedded into  $L^1 \cap L^r(\mathbb{R}^d) $. Then, by also applying  H\"older's inequality with exponent $3/2$ 
\begin{equation*}
\begin{split}
\| F(t,\cdot)\|_{L^{z}(\R^d)}
&\le \frac{dC_{\Delta,K,p,z} \, \nu}{N^{\alpha}} 
\int_0^{t} \frac{1}{\sqrt{t-s}} \| \widetilde{u}_N(s,\cdot)\|_{L^1 \cap H^{\beta,r}(\R^d)}^2 \ ds\\
&\leq \frac{dC_{\Delta,K,p,z} \, \nu}{N^{\alpha}} \left(\int_{0}^t (t-s)^{-\frac{3}{4}} \ ds\right)^{\frac{2}{3}}  \left(\int_{0}^t \| \widetilde{u}_N(s,\cdot)\|_{L^1 \cap H^{\beta,r}(\R^d)}^6 \ ds\right)^\frac{1}{3}
\end{split}
\end{equation*}
Then by  Jensen's inequality with $m\ge 3$ and the bound of Proposition \ref{prop:bound1}
\begin{equation}\label{eq:F_bound}
\begin{split}
\left\|F  \right\|_{L^m(\Omega; C([0,T];L^{z}(\R^d)))}&\le \frac{dC_{\Delta,K,p,z} \, \nu \, T^{1/6}}{N^{\alpha}} \left(\int_{0}^t \mathbb{E}\left[ \| \widetilde{u}_N(s,\cdot)\|_{L^1 \cap H^{\beta,r}(\R^d)}^{2m} \right] \ ds\right)^{\frac{1}{m}}\\
&\le\frac{dC_{u_N}C_{\Delta,K,p,z} \, \nu \, T^{(6+m)/6m}}{N^{\alpha}}. 
\end{split}
\end{equation} 
This inequality immediately extends to $m \in \{1,2\}$.

Now we proceed with an estimate of the last term at right hand side of \eqref{eq:uN-u_proof_1}.

By Remark \ref{Re:Regular_drift}, and by applying the triangular inequality, we obtain
\begin{equation}\label{Convergence:difference}
\begin{split}
A(t) &\le C_\Delta \int_0^t \frac{1}{\sqrt{t-s}} \left\| b_{\eta}(\widetilde{u}_N(s,\cdot), \cdot)(u(s,\cdot)- \widetilde{u}_N(s,\cdot) ) \right\|_{L^z(\R^d)} ds\\
&\,\, +C_\Delta \int_0^t \frac{1}{\sqrt{t-s}} \left\|u(s,\cdot) (b_{\eta}(u_N(s,\cdot), \cdot) - b_{\eta}(\widetilde{u}_N(s,\cdot), \cdot)) \right\|_{L^z(\R^d)}  ds\\
&\le C_{\Delta} \sqrt{d} \left( B_{\eta,u} + \bar{\eta}\right)\int_0^t \frac{1}{\sqrt{t-s}} \| \widetilde{u}_N(s,\cdot)-u(s,\cdot) \|_{L^z(\R^d)} \ ds\\
& \,\, + \widetilde{C}_{\Delta,K,p,q} \sup_{s \in [0,T]}\|u(s, \cdot)\|_{ L^{z}(\R^d)} \int_{0}^t \frac{1}{\sqrt{t-s}} \|u(s,\cdot) - \widetilde{u}_N(s,\cdot) \|_{L^1 \cap L^{r}(\R^d)} \ ds,
\end{split}
\end{equation}
where $\widetilde{C}_{\Delta,K,p,q}= C_\Delta C_{1,\nu}C_{K,p,q}$. In the application of H\"older's inequality to the first term we have considered that  $$\|b_{\eta}(\widetilde{u}_N(s,\cdot), \cdot)\|_{L^\infty(\R^d)}\le \sqrt{d}\left( B_{\eta,u} + \bar{\eta}\right),$$ and applying the Lipschitz continuity of the function $b_\eta$ and Proposition \ref{prop:LJ_con_op} to the second term.

Therefore, denoting $\widetilde{C}=2C_\Delta\sqrt{d} \left( B_{\eta,u} + \bar{\eta}\right)$ from \eqref{eq:uN-u_proof_1}  for both $z=1$ and $z=r$,  and \eqref{Convergence:difference} it follows that 
\begin{equation*}
\begin{split}
\| \widetilde{u}_N(t,\cdot)-u(t,\cdot) \|_{L^1 \cap L^{r}(\R^d)} &\le  \|u_N(0,\cdot)- u_0 \|_{L^1 \cap L^{r}(\mathbb{R}^d)} \\
& \, + \| F(t,\cdot)\|_{L^1 \cap L^{r}(\R^d)}  + \|  \widetilde{S}_N(t,\cdot)\|_{L^1 \cap L^{r}(\mathbb{R}^d)}\\
& \, +\widetilde{C} \int_0^t \frac{1}{\sqrt{t-s}}  \| \widetilde{u}_N(s,\cdot)-u(s,\cdot) \|_{L^1 \cap L^{r}(\R^d)} \ ds
\end{split}
\end{equation*}

By the Gr\"onwall's lemma for singular integrals, we obtain 
\begin{equation}
\| \widetilde{u}_N-u \|_{C^{r}_T} \le C^* \left(\|u_N(0,\cdot)- u_0 \|_{L^1 \cap L^{r}(\mathbb{R}^d)} + \| F \|_{C^{r}_T}  + \|  \widetilde{S}_N\|_{C^{r}_T} \right), 
\end{equation}
where
$C^*=1+\pi\widetilde{C}^2\int_0^t E_{1/2,1}^\prime\left(\pi\widetilde{C}^2(t-s)\right) \, ds$ and $E^\prime$ is the derivative of the Mittag–Leffler function \eqref{eq:Mittang-Leffler}.
 
By taking the $L^m(\Omega)$ norm and by using the bound \eqref{eq:M_N_estimate} in Lemma \ref{Stime: ConInt 3},
we conclude that, for any $\varepsilon>0$, there exists $C^\prime>0$ such that for any $N\in\mathbb{N}$
\begin{align*}
   \| \widetilde{u}_N-u \|_{L^m(\Omega; C_T^r)}  & \le C^\prime \left(\|u_N(0,\cdot)- u_0\|_{L^m(\Omega;L^1 \cap L^r(\R^d))} +  N^{-\alpha } +  N^{-\frac{1}{2} +\frac{\alpha d}{r^\prime}   + \varepsilon} \right),
\end{align*}
with $   
C^\prime=C^*\max\left(C_{u_N}C_{\Delta,K,p,q} \, \nu \, T^{(6+m)/6m},C_S\right)$, where $C_S$ is the positive constant given in Lemma \ref{Stime: ConInt 3}. 
\end{proof}

\begin{remark}
    Theorem \ref{Teo:StrongConv}  holds for $d\ge2$ and $d-1>a>b>0$, assuming $u_0 \in L^1 \cap H^{\beta,r}(\R^d)$ and provided that the parameters satisfy the constraints of Proposition \ref{Prop:MK_strong_ex_un}. However, it is possible to relax these assumptions in the case $a=d-2$. The following corollary clarifies this point. 
\end{remark}

\begin{corollary}\label{Cor:strong_con_critic}
   Let $d\ge 3$. Let $K$ be given by \eqref{eq:JL_potential_force}, with $d-2= a>b>0$, and $r$ as in Theorem \ref{Teo: EUMild}, and finite. Let $r^\prime$ its conjugate exponent. Let $u_0 \in L^1 \cap L^{r}(\R^d)$, and assume that the initial conditions   \eqref{Initial condition v.a.} and \eqref{Initial condition density} are satisfied.

Moreover, let the mesoscopic parameter $\alpha$ such that
\begin{equation*}\label{eq:alfaBessel_2}
    0<\alpha<\displaystyle\frac{r^\prime}{2d}.
\end{equation*}

Let the dynamics of the particle system be given by \eqref{SDE:MesoReg}, let $\widetilde{u}_N$ be the empirical density \eqref{MildFormula u2} and $u$ be the mild solution of the PDE \eqref{PDE: FP1}. 
Let $\varrho  =  \min \left(\alpha\zeta ,\frac{1}{2}-  \frac{\alpha d}{r^\prime}\right)$  where $\zeta=1-d/r$ is the H\"older continuity exponent given in Proposition \ref{Prop:HoderSpaceSubCri} case b. with $q=r$.

Then, we have that for any $\varepsilon>0$ and any $m\geq 1$,   there exists a constant $C>0$ such that for all $N\in\mathbb{N}$
\begin{align*}
 \left\| \widetilde{u}_N-u \right\|_{L^m(\Omega; C_T^r)} 
&\leq C  \left(\left\|u_N(0,\cdot)- u_0\right\|_{L^m(\Omega; L^1 \cap L^r(\R^d))} +  N^{-\varrho +\varepsilon}\right),
\end{align*} 
\end{corollary}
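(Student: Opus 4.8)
The plan is to repeat the proof of Theorem \ref{Teo:StrongConv} almost verbatim, the only substantive change being that, in the critical case $a=d-2$, the Lipschitz continuity of $K\ast\widetilde{u}_N$ (which there required $\widetilde{u}_N(s,\cdot)\in L^1\cap H^{\beta,r}(\R^d)$ with $\beta>0$, via Proposition \ref{Prop:Grad_K2_Bessel}) is replaced by the weaker H\"older estimate of Proposition \ref{Prop:HoderSpaceSubCri} case b.\ with $q=r$, which only needs $\widetilde{u}_N(s,\cdot)\in L^1\cap L^r(\R^d)$ with $r>d$. This is why the initial datum may be taken merely in $L^1\cap L^r(\R^d)$. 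The a priori bound of Proposition \ref{prop:bound1} remains available with $\beta=0$: its proof uses only the uniform boundedness of the cut-off drift $b_\eta$, the heat-semigroup estimates and the stochastic convolution bound of Corollary \ref{cor:est_stochastic_convolution_bessel}, none of which requires $\beta>0$; moreover, for $\beta=0$ condition \eqref{eq:alfaBessel} reduces exactly to $0<\alpha<r'/(2d)$.

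Starting from \eqref{MildFormula u2} and \eqref{eq: mild solution}, the same decomposition as in Theorem \ref{Teo:StrongConv} gives, for $z\in\{1,r\}$, $\widetilde{u}_N-u=e^{t\Delta}(u^N_0-u_0)+\widetilde{S}_N+F+(\text{the drift-difference term }A)$, with $F$ as in \eqref{eq:ForceTerm}. The estimate of $A(t)$ is unchanged: it uses only $\|b_\eta(\widetilde{u}_N(s,\cdot),\cdot)\|_{L^\infty(\R^d)}\le\sqrt{d}(B_{\eta,u}+\bar\eta)$, Remark \ref{Re:Regular_drift}, and Proposition \ref{prop:LJ_con_op} (applicable since $a=d-2<d-1$), and it produces the singular Gr\"onwall term in $\|\widetilde{u}_N-u\|_{L^1\cap L^r(\R^d)}$. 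For $F$ one now bounds, with $\zeta=1-d/r$,
\[
|b_\eta(\widetilde{u}_N(s,y),y)-b_\eta(\widetilde{u}_N(s,x),x)|\le\|f'\|_\infty\,\big|K\ast\widetilde{u}_N(s,\cdot)\big|_\zeta\,|x-y|^\zeta\le C\,\|\widetilde{u}_N(s,\cdot)\|_{L^1\cap L^r(\R^d)}\,|x-y|^\zeta,
\]
using Proposition \ref{Prop:HoderSpaceSubCri} b.\ for the seminorm, and exploits that $V$ is supported in $B(0,\nu)$ so that $V_N(x-y)|x-y|^\zeta\le\nu^\zeta N^{-\alpha\zeta}V_N(x-y)$. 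As in Theorem \ref{Teo:StrongConv} this yields $\|F(t,\cdot)\|_{L^z(\R^d)}\le C\,N^{-\alpha\zeta}\int_0^t(t-s)^{-1/2}\|\widetilde{u}_N(s,\cdot)\|_{L^1\cap L^r(\R^d)}\|\widetilde{u}_N(s,\cdot)\|_{L^z(\R^d)}\,ds$, and then H\"older in time with exponent $3/2$, Jensen's inequality and Proposition \ref{prop:bound1} (with $\beta=0$) give $\|F\|_{L^m(\Omega;C_T^r)}\le C\,N^{-\alpha\zeta}$.

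Combining the cases $z=1$ and $z=r$, applying the Gr\"onwall lemma for singular integrals \cite[Lemma 7.1.1]{Henry_1981}, taking $L^m(\Omega)$ norms and using the stochastic convolution bound \eqref{eq:M_N_estimate} of Lemma \ref{Stime: ConInt 3} with $z=r$ (so $\kappa=1-2/r$, since $r>d\ge3$, and the resulting rate is $N^{-1/2+\alpha d/r'+\varepsilon}$), one obtains
\[
\|\widetilde{u}_N-u\|_{L^m(\Omega;C_T^r)}\le C\Big(\|u_N(0,\cdot)-u_0\|_{L^m(\Omega;L^1\cap L^r(\R^d))}+N^{-\alpha\zeta}+N^{-1/2+\alpha d/r'+\varepsilon}\Big),
\]
which is the claimed estimate with $\varrho=\min(\alpha\zeta,\tfrac12-\alpha d/r')$. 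The only genuinely new effect is that, compared with Theorem \ref{Teo:StrongConv}, the loss of Lipschitz regularity degrades the localization contribution from $N^{-\alpha}$ to $N^{-\alpha\zeta}$ with $\zeta<1$; the main point to verify carefully is precisely that Proposition \ref{prop:bound1} still delivers the uniform-in-$N$ bound $\sup_N\|\widetilde{u}_N\|_{L^m(\Omega;C([0,T];L^r(\R^d)))}<+\infty$ in this reduced setting, everything else being a repetition of the proof of Theorem \ref{Teo:StrongConv}.
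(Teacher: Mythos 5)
Your proposal is correct and follows essentially the same route as the paper, which likewise proves the corollary by rerunning Theorem \ref{Teo:StrongConv} and replacing the Lipschitz bound on $K\ast\widetilde{u}_N$ in the estimate of $F$ by the H\"older estimate of Proposition \ref{Prop:HoderSpaceSubCri} case b.\ (noting $r\ge p^\prime$ and $a=d-2$ force $r>d$), which degrades the localization rate from $N^{-\alpha}$ to $N^{-\alpha\zeta}$. Your additional explicit check that Proposition \ref{prop:bound1} still yields the uniform-in-$N$ bound with $\beta=0$ (and that \eqref{eq:alfaBessel} then reduces to $0<\alpha<r^\prime/(2d)$) is a point the paper leaves implicit, and it is verified correctly.
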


\begin{proof}
The proof proceeds analogously to that of Theorem \ref{Teo:StrongConv}, the main difference is that, in order to estimate the function $F(t,x)$ given in \eqref{eq:ForceTerm}, we use the H\"older continuity of $K \ast \widetilde{u}_N$ given by Proposition \ref{Prop:HoderSpaceSubCri} case b.. We note that under the conditions $r\ge p^\prime$ and $a=d-2$, we have $r>d$ (see Remark \ref{Re:MK_strong_ex_un}). 
\end{proof}

It is also possible to relax the assumption on $r$ when $d-2>\alpha>b>0$.

\begin{corollary}\label{Cor:strong_con_sub}
   Let $d\ge 3$. Let $K$ be given by \eqref{eq:JL_potential_force}, with $d-2> a>b>0$, and $r$ as in Theorem \ref{Teo: EUMild}. Let $u_0 \in L^1 \cap L^{r}(\R^d)$, and assume that the initial conditions \eqref{Initial condition v.a.} and \eqref{Initial condition density} are satisfied. Let  $\alpha$ be such that $$
   0<\alpha<\frac{1}{d+d\kappa},$$
   where $\kappa$ is as in Lemma \ref{Stime: ConInt 3} with $z=r$. Let the dynamics of the particle system be given by \eqref{SDE:MesoReg}, let $\widetilde{u}_N$ be the empirical density \eqref{MildFormula u2} and $u$ the mild solution of the PDE \eqref{PDE: FP1}. Let $\varrho  =  \min \left(\alpha\zeta ,\frac{1}{2}-  \frac{\alpha d}{r^\prime}\right)$  where
$\zeta=1-d/q$ is the H\"older continuity exponent given in Proposition \ref{Prop:HoderSpaceSubCri} case a.

Then, we have that for any $\varepsilon>0$ and any $m\geq 1$, there exists a constant $C>0$ such that for all $N\in\mathbb{N}$
\begin{align*}
 \left\|\widetilde{u}_N-u  \right\|_{L^m(\Omega; C_T^r)} 
&\leq C  \left(\left\|u_N(0,\cdot)- u_0\right\|_{L^m(\Omega; L^1 \cap L^r(\R^d))} +  N^{-\varrho +\varepsilon}\right).
\end{align*} 
\end{corollary}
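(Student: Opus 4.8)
The plan is to follow closely the scheme of the proof of Theorem~\ref{Teo:StrongConv}, adapting it to the weaker hypothesis $u_0\in L^1\cap L^r(\R^d)$ (in place of $u_0\in L^1\cap H^{\beta,r}(\R^d)$). Since Proposition~\ref{prop:bound1} is no longer available, the first step is to establish a uniform-in-$N$ a priori bound $\sup_{N\in\mathbb N}\|\widetilde{u}_N\|_{L^m(\Omega;C_T^r)}<+\infty$ directly in the $L^1\cap L^r$ scale. Starting from the mild formulation \eqref{MildFormula u2}, I would take the $L^1\cap L^r(\R^d)$ norm, use the heat semigroup bound \eqref{eq:heat_semigroup_estimate} together with the pointwise inequality
$$\bigl|V_N\ast\bigl(b_\eta(\widetilde{u}_N(s,\cdot),\cdot)\,\widetilde{\mu}_N(s)\bigr)(x)\bigr|\le \|b_\eta\|_{L^\infty(\R^d)}\,\widetilde{u}_N(s,x),$$
which holds because $V_N\ast\widetilde{\mu}_N=\widetilde{u}_N$ and $b_\eta$ is uniformly bounded by $\sqrt d\,(B_{\eta,u}+\bar{\eta})$, and then the Gr\"onwall lemma for singular integrals to reduce to the initial datum $u_N(0,\cdot)$ and to $\widetilde{S}_N$. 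Taking the $L^m(\Omega)$ norm, the bound \eqref{eq:M_N_estimate} of Lemma~\ref{Stime: ConInt 3} with $z=r$ contributes a term of order $N^{-\frac12(1-\alpha(d+d\kappa))+\varepsilon}$, which stays bounded precisely because $\alpha<(d+d\kappa)^{-1}$; combined with \eqref{Initial condition density} (with $\beta=0$) this yields the required uniform bound.

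With this bound at hand, I would reproduce the decomposition of $\widetilde{u}_N-u$ from the proof of Theorem~\ref{Teo:StrongConv} into the contributions $e^{t\Delta}(u_0^N-u_0)$, the stochastic convolution $\widetilde{S}_N$, the drift-discretization error $F$ of \eqref{eq:ForceTerm}, and the comparison term $A(t)$. The term $A(t)$ is treated exactly as in Theorem~\ref{Teo:StrongConv}: by Remark~\ref{Re:Regular_drift} the cut-off is inactive on $u$, so $K\ast u=b_\eta(u,\cdot)$; adding and subtracting $b_\eta(\widetilde{u}_N(s,\cdot),\cdot)\,u(s,\cdot)$, using $\|b_\eta\|_{L^\infty}\le\sqrt d\,(B_{\eta,u}+\bar{\eta})$, the bound $\|f'\|_\infty\le1$ and Proposition~\ref{prop:LJ_con_op}(ii) (recall $r\ge p^\prime$, so $L^1\cap L^r\hookrightarrow L^1\cap L^{p^\prime}$), one gets $A(t)\le \widetilde C\int_0^t(t-s)^{-1/2}\|\widetilde{u}_N(s,\cdot)-u(s,\cdot)\|_{L^1\cap L^r(\R^d)}\,ds$. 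The genuinely new point is the estimate of $F$: instead of the Lipschitz continuity of $K\ast\widetilde{u}_N$ used in Theorem~\ref{Teo:StrongConv}, I would invoke its H\"older continuity of exponent $\zeta=1-d/q$ from Proposition~\ref{Prop:HoderSpaceSubCri} case a., which gives $|b_\eta(\widetilde{u}_N(s,x),x)-b_\eta(\widetilde{u}_N(s,y),y)|\le C\,\|\widetilde{u}_N(s,\cdot)\|_{L^1\cap L^r}\,|x-y|^\zeta$; combining this with $V_N(x-y)|x-y|^\zeta\le \nu^\zeta N^{-\alpha\zeta}V_N(x-y)$, the heat bound, H\"older's inequality in time, Jensen's inequality and the uniform bound from the first step yields $\|F\|_{L^m(\Omega;C([0,T];L^z(\R^d)))}\le C\,N^{-\alpha\zeta}$ for $z\in\{1,r\}$.

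Finally I would collect the four contributions into a singular integral inequality for $\|\widetilde{u}_N(t,\cdot)-u(t,\cdot)\|_{L^1\cap L^r(\R^d)}$, apply the Gr\"onwall lemma for singular integrals \cite[Lemma 7.1.1]{Henry_1981}, take the supremum over $t\in[0,T]$ and the $L^m(\Omega)$ norm, and use once more \eqref{eq:M_N_estimate} with $z=r$ for $\widetilde{S}_N$, noting that $-\tfrac12(1-\alpha(d+d\kappa))=-\tfrac12+\tfrac{\alpha d}{r^\prime}$ since $1-1/r=1/r^\prime$. This produces
$$\|\widetilde{u}_N-u\|_{L^m(\Omega;C_T^r)}\le C\bigl(\|u_N(0,\cdot)-u_0\|_{L^m(\Omega;L^1\cap L^r(\R^d))}+N^{-\alpha\zeta}+N^{-1/2+\alpha d/r^\prime+\varepsilon}\bigr),$$
and since the slower of the two powers of $N$ is $N^{-\varrho}$ with $\varrho=\min\bigl(\alpha\zeta,\tfrac12-\tfrac{\alpha d}{r^\prime}\bigr)$, the claim follows. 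I expect the main obstacle to be the first step, namely obtaining the uniform-in-$N$ bound on $\|\widetilde{u}_N\|_{L^m(\Omega;C_T^r)}$ without the Bessel-space estimate of Proposition~\ref{prop:bound1}: this is exactly where the restriction $\alpha<(d+d\kappa)^{-1}$ is forced, being the threshold below which the stochastic-convolution bound \eqref{eq:M_N_estimate} remains bounded as $N\to\infty$. The remaining steps are routine adaptations of the arguments in Theorem~\ref{Teo:StrongConv} and Corollary~\ref{Cor:strong_con_critic}, with the H\"older continuity of $K\ast\widetilde{u}_N$ replacing the Lipschitz continuity in the control of $F$.
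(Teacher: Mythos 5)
Your proposal is correct and follows essentially the same route the paper intends: the corollary is stated without a detailed proof, but (as in Corollary~\ref{Cor:strong_con_critic}) it is meant to be the proof of Theorem~\ref{Teo:StrongConv} with the Lipschitz bound on $K\ast\widetilde{u}_N$ in the $F$-term replaced by the H\"older estimate \eqref{Est:Convolution_function_inHolder} of Proposition~\ref{Prop:HoderSpaceSubCri} case a., which is exactly what produces the $N^{-\alpha\zeta}$ contribution and hence $\varrho=\min(\alpha\zeta,\tfrac12-\alpha d/r^\prime)$, with the condition $\alpha<1/(d+d\kappa)$ playing precisely the role you identify for the stochastic convolution via \eqref{eq:M_N_estimate}. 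Your explicit first step, re-deriving the uniform-in-$N$ moment bound on $\widetilde{u}_N$ in the $L^1\cap L^r$ scale (essentially Proposition~\ref{prop:bound1} with $\beta=0$, whose proof never actually uses $r>d$ since the cut-off drift is bounded), correctly fills the one detail the paper leaves implicit when $r>d$ is dropped.
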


Finally, we may state the convergence result.

\begin{proposition}[Strong convergence of the regularized particle system]\label{prop:convergence_u_N_localised}
  Let the dynamics of the particle system be given by \eqref{SDE:MesoReg}, $\widetilde{u}_N$ the empirical density \eqref{MildFormula u2} and $u$ the mild solution of the PDE \eqref{PDE: FP1}. Under the assumptions of Theorem \ref{Teo:StrongConv}, or Corollary \ref{Cor:strong_con_critic}, or Corollary \ref{Cor:strong_con_sub}  if the empirical density converges to the initial condition $u_0$  in $L^m(\Omega,L^1 \cap L^r(\R^d))$, i.e.   
    \begin{equation}\label{eq:lim_ini_cond}
    \lim\limits_{N \to+ \infty}  u_N(0,\cdot)   = u_0 \mbox{ in } L^m(\Omega,L^1 \cap L^r(\R^d)),\end{equation}
    then, 
   $$
 \lim\limits_{N \to+ \infty} \left\| \widetilde{u}_N-u  \right\|_{L^m(\Omega; C_T^r)} 
= 0.
$$
\end{proposition}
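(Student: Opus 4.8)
The plan is to deduce the statement directly from the quasi-continuity estimate already established for the cut-off system. By Theorem \ref{Teo:StrongConv} (or Corollary \ref{Cor:strong_con_critic}, or Corollary \ref{Cor:strong_con_sub}, according to which set of hypotheses is in force), there exist a constant $C'>0$ and an exponent $\varrho>0$ such that, for every $\varepsilon>0$ and every $N\in\mathbb{N}$,
$$
\|\widetilde{u}_N-u\|_{L^m(\Omega;C_T^r)}\le C'\Big(\|u_N(0,\cdot)-u_0\|_{L^m(\Omega;L^1\cap L^r(\mathbb{R}^d))}+N^{-\varrho+\varepsilon}\Big).
$$
Hence the entire argument reduces to checking that the right-hand side tends to zero as $N\to\infty$.

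First I would verify that the rate $\varrho$ is strictly positive under the standing assumptions, so that the $N$-dependent term genuinely decays. In the setting of Theorem \ref{Teo:StrongConv} one has $\varrho=\min\!\big(\alpha,\tfrac12-\tfrac{\alpha d}{r'}\big)$; the first argument is positive since $\alpha\in(0,1)$, and the second is positive because the admissibility condition \eqref{eq:alfaBessel}, together with $\beta\ge0$, forces $\alpha<\big(2(\beta+d/r')\big)^{-1}\le r'/(2d)$, i.e. $\tfrac12-\tfrac{\alpha d}{r'}>0$. The same reasoning covers the corollaries: there the relevant Hölder exponent $\zeta=1-d/q$ is positive because $q>d$, and the respective mesoscale restrictions again guarantee $\tfrac12-\tfrac{\alpha d}{r'}>0$, so $\varrho>0$ in all three cases. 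Having fixed such a $\varrho$, I would choose $\varepsilon\in(0,\varrho)$, so that $N^{-\varrho+\varepsilon}\to0$ as $N\to\infty$.

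It remains only to handle the initial-condition term, and this vanishes in the limit precisely by the hypothesis \eqref{eq:lim_ini_cond}. Passing to the limit $N\to\infty$ in the displayed inequality then yields $\|\widetilde{u}_N-u\|_{L^m(\Omega;C_T^r)}\to0$, which is the assertion. There is no genuine obstacle in this proof: the statement is an immediate corollary of the quasi-continuity bound, and the only point requiring a line of care is confirming that the parameter constraints already imposed make the convergence rate strictly positive, so that the mesoscale contribution $N^{-\varrho+\varepsilon}$ actually decays rather than merely staying bounded.
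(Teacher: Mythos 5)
Your proposal is correct and follows essentially the same route as the paper: the result is obtained by passing to the limit in the quasi-continuity estimate \eqref{quasi-continuity_mth_moment} (or its analogues in the corollaries) and invoking the hypothesis \eqref{eq:lim_ini_cond} on the initial data. Your additional check that the constraints on $\alpha$ make $\varrho>0$, so that $N^{-\varrho+\varepsilon}\to 0$ for $\varepsilon\in(0,\varrho)$, is a correct and worthwhile verification that the paper leaves implicit.
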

\begin{proof}
The conclusion follows easily by taking the limit as $N \to +\infty$ in \eqref{quasi-continuity_mth_moment} and using condition \eqref{eq:lim_ini_cond}.
\end{proof}

\subsection{The SDE at the mesoscale in \texorpdfstring{$\R^d$}{R^d}:  weak convergence.}
The following proposition establishes the weak convergence rate for the original particle system.

\begin{proposition}\label{Prop:weak_con}
    Let us suppose that all the the hypotheses of Theorem \ref{Teo:StrongConv} are satisfied, with $\varrho  =  \min \left(\alpha ,\frac{1}{2}-  \frac{\alpha d}{r^\prime}  \right)$. Then, under the initial condition \eqref{eq:lim_ini_cond}, the empirical density  $u_N$ defined in \eqref{eq:def_empirical_density} converge in probability to the   mild solution $u$   of the PDE \eqref{PDE: FP1}  in $C([0,T];L^1 \cap L^r(\R^d))$, i.e. for any $\eta>0$
    \begin{equation} \label{eq:convergence_in_probability}
\lim\limits_{N\rightarrow +\infty} \mathbb P\left(\left\| u_N-u \right\|_{ C_T^r}\ge \eta  \right)=0.
\end{equation}
 
\end{proposition}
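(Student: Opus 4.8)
The plan is to compare the true particle system \eqref{SDE:Meso1}, whose empirical density is $u_N$, with the cut-off auxiliary system \eqref{SDE:MesoReg}, whose empirical density $\widetilde u_N$ we have already shown converges to $u$ in $L^m(\Omega;C_T^r)$ (Proposition \ref{prop:convergence_u_N_localised} together with the hypothesis \eqref{eq:lim_ini_cond}, with rate given by Theorem \ref{Teo:StrongConv}). The key observation is that the two dynamics \emph{coincide} as long as the cut-off is never triggered, i.e.\ as long as each component of $K\ast\widetilde u_N$ stays in $[-B_{\eta,u},B_{\eta,u}]$; and since $B_{\eta,u}=C_{1,\nu}C_{K,p,q}(\eta+\|u\|_{C_T^r})$ is calibrated to the \emph{a priori} bound \eqref{eq:Ckd_K*f} on $K\ast\widetilde u_N$, this happens precisely on the event $\{\|\widetilde u_N-u\|_{C_T^r}\le\eta\}$, whose probability tends to $1$.

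Concretely, fix $\eta>0$ and build $f_\eta,b_\eta$ and the system \eqref{SDE:MesoReg} with this $\eta$. By Proposition \ref{prop:uN_regularity}, $K\ast\widetilde u_N\in C([0,T];C^2(\R^d))$, so $t\mapsto\|K\ast\widetilde u_N(t,\cdot)\|_{L^\infty(\R^d)}$ is continuous and $\tau_N:=\inf\{t\in[0,T]:\|K\ast\widetilde u_N(t,\cdot)\|_{L^\infty(\R^d)}>B_{\eta,u}\}\wedge T$ is a stopping time. On $[0,\tau_N]$ one has $b_\eta(\widetilde u_N(t,\cdot),\cdot)=f_\eta(K\ast\widetilde u_N(t,\cdot))=K\ast\widetilde u_N(t,\cdot)$, and by associativity of convolution $K\ast\widetilde u_N(t,\widetilde X^i_t)=\frac1N\sum_k K_N(\widetilde X^i_t-\widetilde X^k_t)$; hence the stopped process $(\widetilde X^i_{\cdot\wedge\tau_N})_i$ solves, up to $\tau_N$, the original $Nd$-dimensional SDE \eqref{SDE:Meso1} with the same Brownian motions and the same initial data $\xi^i$. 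Since the drift of \eqref{SDE:Meso1} is bounded and globally Lipschitz (Proposition \ref{Prop:KN_bounded} and the proof of Proposition \ref{prop:uN_regularity}), a localized Gr\"onwall/pathwise-uniqueness argument (Theorem \ref{teo_exis:uniq_particle_SDEs}) gives $\widetilde X^i_{t\wedge\tau_N}=X^i_{t\wedge\tau_N}$ for all $t,i$ a.s.; in particular $u_N\equiv\widetilde u_N$ on $[0,T]$ on the event $\{\tau_N=T\}$.

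It then remains to control $\mathbb P(\tau_N<T)$. By \eqref{eq:Ckd_K*f} and the continuous embedding $L^1\cap L^r(\R^d)\hookrightarrow L^1\cap L^{p'}(\R^d)$ (available since $r\ge p'$), for every $t$ one has $\|K\ast\widetilde u_N(t,\cdot)\|_{L^\infty(\R^d)}\le C_{1,\nu}C_{K,p,q}\,\|\widetilde u_N(t,\cdot)\|_{L^1\cap L^r(\R^d)}\le C_{1,\nu}C_{K,p,q}(\|u\|_{C_T^r}+\|\widetilde u_N-u\|_{C_T^r})$; thus on $\{\|\widetilde u_N-u\|_{C_T^r}<\eta\}$ the right-hand side stays strictly below $B_{\eta,u}$ for all $t$, so $\tau_N=T$. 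Combining with the previous step, $\{\|u_N-u\|_{C_T^r}\ge\eta\}\subseteq\{\|\widetilde u_N-u\|_{C_T^r}\ge\eta\}$, and therefore, by Markov's inequality and Proposition \ref{prop:convergence_u_N_localised}, $\mathbb P(\|u_N-u\|_{C_T^r}\ge\eta)\le\mathbb P(\|\widetilde u_N-u\|_{C_T^r}\ge\eta)\le\eta^{-m}\,\|\widetilde u_N-u\|^m_{L^m(\Omega;C_T^r)}\to0$ as $N\to+\infty$, which is \eqref{eq:convergence_in_probability}.

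The main obstacle, and the only genuinely delicate point, is the identification $u_N=\widetilde u_N$ on the good event: that event is not $\mathcal F_0$-measurable, so the comparison must be carried out through the stopping time $\tau_N$ and the localized pathwise uniqueness of the true $N$-particle system, rather than by a naive pathwise argument. Everything else is a routine combination of the calibration of the threshold $B_{\eta,u}$, the a priori $L^\infty$-bound on $K\ast\widetilde u_N$, Markov's inequality, and the $m$-th moment convergence already established for the regularized system; the resulting rate of convergence in probability is, as expected, no better than $N^{-\varrho+\varepsilon}$ with $\varrho$ as in Theorem \ref{Teo:StrongConv}.
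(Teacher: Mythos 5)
Your proposal is correct and follows essentially the same strategy as the paper: compare $u_N$ with the cut-off density $\widetilde u_N$, show the two systems coincide on the high-probability event where the force field $K\ast\widetilde u_N$ stays below the calibrated threshold $B_{\eta,u}$ (which, by the bound $\|K\ast\widetilde u_N(t,\cdot)\|_{L^\infty(\R^d)}\le C_{1,\nu}C_{K,p,q}\|\widetilde u_N(t,\cdot)\|_{L^1\cap L^r(\R^d)}$, contains $\{\|\widetilde u_N-u\|_{C_T^r}\le\eta\}$), and conclude via Markov's inequality and the $L^m$-convergence of Theorem \ref{Teo:StrongConv}. Your stopping-time localization $\tau_N$ is simply a more explicit rendering of the paper's identification of $X^i$ and $\widetilde X^i$ on the event $\Omega_\eta$ via pathwise uniqueness, and it even yields the inclusion $\{\|u_N-u\|_{C_T^r}\ge\eta\}\subseteq\{\|\widetilde u_N-u\|_{C_T^r}\ge\eta\}$, i.e.\ the paper's bound without the factor $2$.
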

\begin{proof}

We obtain the thesis \eqref{eq:convergence_in_probability} by proving that for any $m\geq 1$ and  any $\varepsilon \in \left(0,\varrho\right)$ there exists a constant $C>0$ such that for all $N\in\mathbb{N}$
\begin{equation} \label{eq:estimate_in_probability}
 \mathbb P\left(\left\| u_N-u \right\|_{ C_T^r}\ge \eta  \right)
\le \frac{C}{\eta^m }  \left(\left\|u_N(0,\cdot)- u_0\right\|_{L^m(\Omega; L^1 \cap L^r(\R^d))} +  N^{-\varrho +\varepsilon}\right)^m.
\end{equation}

Let us introduce the following spatial region 
\begin{equation*}
D(\eta)=\left\{ x \in \R^d :  \sup_{ t \in [0,T] } \left\|K \ast u_N(t,x)\right\|_{L^\infty(\R^d)}\le C_{1,\nu}C_{K,p,q}\left(\eta+ \|u\|_{C_T^r} \right) \right\}, 
\end{equation*}
 Furthermore, let us define the event   $$\Omega_{\eta} = \left\{ \omega \in \Omega : \forall i \in \{1,\dots,N\}, \forall t \in [0,T], \  \widetilde{X}_t^i(\omega) \in D(\eta) \right\},$$
which represents the event in which all particles moves with a bounded drift for all $t \in [0,T]$. 

    Due to the pathwise uniqueness of the SDE \eqref{SDE:Meso1}, for any $\omega\in\Omega_\eta$ it is easy to see that $\widetilde{X}_t^i(\omega)=X_t^i(\omega)$, and  consequently $u_N(t,X_t^i( \omega))=\widetilde{u}_N(t,\widetilde{X}_t^i(\omega))$ for any $i \in \{1,\dots,N\}$ and any $t \in [0,T]$.  Therefore, 
    \begin{equation*}
\begin{split}
        \mathbb P\left(\left\| u_N-u \right\|_{ C_T^r}\ge \eta  \right) &\le \mathbb{P}\left( \Omega_{\eta}^c \cap \left\| u_N-u \right\|_{ C_T^r}\ge \eta \right)+\mathbb P\left( \Omega_{\eta} \cap \left\| u_N-u \right\|_{ C_T^r}\ge \eta  \right) \\
        &\le  \mathbb{P}\left( \Omega_{\eta}^c\right)+\mathbb P\left(  \left\| \widetilde{u}_N-u \right\|_{ C_T^r}\ge \eta  \right)\\
        &\le  \mathbb{P}\left(\|\widetilde{u}_N\|_{C_T^r} \ge \eta +\|u\|_{C_T^r} \right)+\mathbb P\left(  \left\| \widetilde{u}_N-u \right\|_{ C_T^r}\ge \eta  \right)\\
    &\le 2\mathbb{P}\left(\|\widetilde{u}_N-u\|_{C_T^r} \ge \eta\right),
  \end{split}
\end{equation*}
where the third inequality is due to  Proposition \ref{prop:LJ_con_op}.
 
Inequality \eqref{eq:estimate_in_probability} is obtained by applying Markov's inequality and Theorem \ref{Teo:StrongConv}, that is
\begin{align*}
    \mathbb P\left(\left\| u_N-u \right\|_{ C_T^r} \ge \eta  \right) &\le 2\mathbb{P}\left(\|\widetilde{u}_N-u\|_{C_T^r} \ge \eta\right)\le \frac{2}{\eta^m}\mathbb{E}\left[\|\widetilde{u}_N-u\|^m_{C_T^r} \right]  \\ &\le \frac{2}{\eta^m}C^{\prime^m} \left(\left\|u^N_0- u_0\right\|_{L^m(\Omega; L^1 \cap L^r(\R^d))} +  N^{-\varrho +\varepsilon}\right) ^m . 
\end{align*} 
\end{proof}

\begin{remark}
 Following the same line of argumets as in Corollary \ref{Cor:strong_con_critic} and Corollary \ref{Cor:strong_con_sub}, it is possible to relax the hypotheses of Proposition \ref{Prop:weak_con}.
\end{remark}

\begin{remark}
 In conclusion, combining Theorem \ref{Teo:StrongConv} and Proposition \ref{prop:convergence_u_N_localised}, whenever convergence holds, the rate of 
convergence is of order $N^{-\varrho+\varepsilon}$, with
$ \varrho = \min\!\left( \alpha , \, \frac{1}{2} - \frac{\alpha d}{r'} \right).$
Following the considerations in Remark \ref{remark:alpha_singularity} and 
observing that
\[
\frac{1}{2} - \frac{\alpha d}{r'} 
< \frac{1}{2} - \alpha (a+1),
\]
this rate is slower than the classical $1/2$. This behaviour is expected, 
since it is necessarily smaller than the order associated with the 
mesoscale, which is responsible for the localization of the drift as 
$N \to +\infty$. Moreover, the rate is slower than $1/2$ due to the combined 
effect of the mesoscale and the singularity of the drift. Therefore, the 
result is fully consistent.

\end{remark}


\appendix

\section{Proof of the Proposition \ref{Prop:K_L1_KN_bounded}} \label{appendix:proof:prop_Prop:K_L1_KN_bounded}
\begin{proof}
  We have to show that for every compact set $D \subset \mathbb{R}^d$, 
    \begin{equation*}
        \int_D \left|K(x)\right| \ dx=\int_D\epsilon\left|\frac{R_0^a}{|x|^{a+1}}-\frac{R_0^b}{|x|^{b+1}}\right| \ dx<+\infty.
    \end{equation*}

    If $0 \notin D$, the integral is trivially finite,  since the function is continuous on a compact set.
Now, let  $ 0 \in D$. Fix $\delta>0$ such that $D \subset B(0,\delta) $, where $B(0,\delta)$ denotes the ball of radius $\delta $ centred at the origin. 
 By considering  the spherical coordinate system $r>0$, $\theta_i \in [0,\pi]$ for all $i \in \{1,\dots,d-2\}$, and $\phi \in [0,2\pi)$, such that 
$
     x_1= r\cos(\theta_1), 
   x_2= r\sin(\theta_1)cos(\theta_2), \cdots,   
  x_d= r \sin(\theta_1)\sin(\theta_2)...\sin(\theta_{d-2})\sin(\phi)  
$
and given the determinant of the Jacobian matrix of the transformation $
    \text{det}\left(J_d\right) = r^{d-1} \sin^{d-2}(\theta_1) \sin^{d-3}(\theta_2)...\sin(\theta_{d-2}). 
$ We obtain
  \begin{eqnarray*}
        \int_D \left|K(x)\right| \ dx &<&\epsilon\int_{B(0,\delta)}  \frac{R_0^a}{|x|^{a+1}}+\frac{R_0^b}{|x|^{b+1}} \ dx\\
        &=&\epsilon\int_0^\pi\int_0^\pi  \cdots\int_0^{2\pi} \int_0^{\delta} \left(\frac{R_0^a}{r^{a+1}}+\frac{R_0^b}{r^{b+1}}\right) \text{det}\left(J_d\right) \ d\theta_1d\theta_2 \dots d\phi dr \\ 
        &=&\epsilon\int_0^\pi\sin^{d-2}(\theta_1) \ d\theta_1\int_0^\pi \sin^{d-3}(\theta_2) 
 \ d\theta_{2}  \cdots\int_0^{2\pi}d\phi \int_0^{ \delta} \left(\frac{R_0^a}{r^{a+1}}+\frac{R_0^b}{r^{b+1}}\right)r^{d-1} \ dr \\
       &=&  C_\epsilon\int_0^\delta \displaystyle \frac{1}{r^{a+2-d}} \ dr+   C_\epsilon \int_0^\delta \displaystyle \frac{1}{r^{b+2-d}} \ dr, 
    \end{eqnarray*}
where $C_\epsilon$  depends upon $\epsilon $.
    The first integral is finite when $a+2-d<1$, hence, $a<d-1$. Similarly, $b<d-1$.  Then, the local integrability property  is proven. 
\end{proof}

 \subsection{Proof Proposition \ref{Prop:KN_bounded}}\label{appendix:KN_properties}

 \begin{proof}
     
       Fix $N \in \mathbb{N}$. Given $\kappa=\operatorname{supp}(V)$,  then  $\kappa_N:=N^{-\alpha}\kappa $ is the support of the mollifier $V_N$.  From Proposition \ref{Prop:K_L1_KN_bounded} we have that  $K\in L^1_{loc}(\mathbb R^d)$, then  $K*V_N(x)$ is well defined  for any $x\in \mathbb R^d$. Indeed, by H\"older's inequality for the conjugate pair $L^1$–$L^\infty$ we get, for any $x\in \mathbb R^d$,
    \begin{align*}
    |K \ast V_N(x)| & \le  
    \int_{\kappa_N}|K(x-y)||V_N(y)|  \ dy  \le 
    \|K\|_{L^1(x-\kappa_N)}\|V_N\|_{L^\infty(\mathbb{R}^d)},  \end{align*} 
    where 
    $x - \kappa_N := \{\, x - z : z \in \kappa_N \,\}.$ We can prove a uniform (in $x$) bound for any translation of the compact $\kappa_N$, that is \begin{equation}\label{eq:local_uniform_bound_K_on_traslation}\sup_{x \in \mathbb{R}^d} \|K\|_{L^1(x-\kappa_N)}=C^*<+\infty, \end{equation}
    From  the uniform bound of local averages \eqref{eq:local_uniform_bound_K_on_traslation} we would get the desiderated regularity of the convolution $K_N$;  for any fixed $N\in \mathbb N,$
    $$\|K_N\|_{L^\infty(\mathbb{R}^d) } = \|K \ast V_N\|_{L^\infty(\mathbb{R}^d) } \le \left(\sup_{x \in \mathbb{R}^d}\|K\|_{L^1(x-\kappa_N)}\right)\|V_N\|_{L^\infty(\mathbb{R}^d)}\le C^* N^{d\alpha}\|V\|_{L^\infty(\mathbb{R}^d)}<+\infty.$$
      By classical results \cite[Theorem 1.3.1]{Hormander_2003}, we also have that $V_N \ast K \in C^2(\mathbb{R}^d)$. 
In conclusion, $K \ast V_N$ is well defined, thus $K_N=K\ast V_N=V_N\ast K \in C^2\cap L^\infty(\mathbb{R}^d)$, that is the thesis is proved.

We have left the proof of the uniform bound of local averages \eqref{eq:local_uniform_bound_K_on_traslation}. 
We first notice that
$$|K(y)|\le
\begin{cases}
\displaystyle \frac{C}{|y|^{a+1}}, & \text{if } |y| \le R_0 \\
\displaystyle \frac{C}{|y|^{b+1}}, & \text{if } |y| > R_0
\end{cases}
$$
for some constant $C > 0$. Let $\delta>0$ such that $\kappa_N\subseteq B(0,\delta)$, so that $x-\kappa_N \subseteq B(x,\delta)$; hence,  we estimate the local integral of $K$ over the shifted support $x-\kappa_N$ as follows

$$\int_{x-\kappa_N}|K(y)| \, dy \le \int_{B(x,\delta)} |K(y)| \, dy.$$
We distinguish two cases. Let us first suppose  $ |x| > R_0 + \delta $. 
Then, for all $y \in B(x, \delta) $, we have $|y| > R_0 $, so that by writing $y=x+u$ with $|u|\le \delta$, 
$$\int_{x-\kappa_N}|K(y)| \, dy \le \int_{B(x,\delta)} |K(y)| \, dy\le C\int_{B(x, \delta)} \frac{1}{|y|^{b+1}}\,dy = C\int_{B(0, \delta)} \frac{1}{|x + u|^{b+1}}\,du\le \frac{C|B(0, \delta)|}{(|x| - \delta)^{b+1}}.$$

 Now, let us consider the case $|x| \le R_0 + \delta$. In this case, the integration domain $ x - \kappa_N \subset B(x, \delta) \subset B(0, R_0 + 2\delta)$ is contained in a fixed compact set. Since $K \in L^1_{\text{loc}}(\mathbb{R}^d) $, there exists a finite constant $C^\prime$ such that
$$
\sup_{|x| \le R_0 + \delta} \int_{x - \kappa_N} |K(y)|\,dy =C^\prime.
$$

Combining the two cases,  we obtain a uniform bound
$$
\sup_{x\in \mathbb{R}^d} \int_{x-\kappa_N} |K(y)| \, dy
\le \max\left\{ C', \; \frac{C\, |B(0,\delta)|}{R_0^{b+1}} \right\} <+ \infty.
$$

 \end{proof}

  \section{A priori estimate for the \texorpdfstring{$m$}{m}-th moment of the stochastic convolution integral in \texorpdfstring{$C([0,T];H^{\beta,r}(\R^d))$}{C([0,T];H^{\beta,r}(\R^d))}}
  
 \begin{proposition}\label{Prop:BesselMNEst}
Let $d\ge 2$. Let $K$ be given by \eqref{eq:JL_potential_force}, with $d-1>a>b>0$, and $S_N$ be given by \eqref{eq:StoConInt}. Let $m\ge1$, $r$ be as in Theorem \ref{Teo: EUMild}, $\beta \in [0,1)$ and let $\varepsilon>0$. Then, there exists $\widetilde{C}>0$ such that for any $N\in \N$, 
     \begin{equation*}
         \left\|S_N\right\|_{L^m(\Omega; C([0,T];H^{\beta,r}(\R^d)))}  \le\widetilde{C}N^{-1/2+\alpha(d+\beta- d/r)+\varepsilon}.
     \end{equation*}
   \end{proposition}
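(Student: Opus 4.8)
The plan is to notice that the stated estimate is precisely Corollary~\ref{cor:est_stochastic_convolution_bessel} with $z=r$, once one rewrites the exponent using $d/r'=d-d/r$, so that $\beta+d/r'=d+\beta-d/r$; the role of this appendix is then to carry out in full the argument that was only sketched there. First I would strip off the Bessel weight: since the Fourier multiplier $(\mathrm I-\Delta)^{\beta/2}$ commutes with the heat semigroup, with $\nabla$, and with spatial translations, applying it under the stochastic integral in \eqref{eq:StoConInt} yields, for every $t\in[0,T]$, the isometric identity $\|S_N(t,\cdot)\|_{H^{\beta,r}(\R^d)}=\|\overline S_N(t,\cdot)\|_{L^r(\R^d)}$, with $\overline S_N$ the modified stochastic convolution of Corollary~\ref{cor:est_stochastic_convolution_bessel}. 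Hence it suffices to bound $\overline S_N$ in $L^m(\Omega;C([0,T];L^r(\R^d)))$, i.e.\ to run the proof of \eqref{eq:M_N_estimate} in Lemma~\ref{Stime: ConInt 3} with the kernel $(\mathrm I-\Delta)^{\beta/2}\nabla V_N$ in place of $\nabla V_N$.

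For this I would use the two ingredients that underlie Lemma~\ref{Stime: ConInt 3}. The first is the time-increment estimate \eqref{eq:MN_difference_estimate} specialised to $z=r$: for every $\delta\in(0,1]$ and $0\le s\le t\le T$,
\[
\|\overline S_N(t,\cdot)-\overline S_N(s,\cdot)\|_{L^m(\Omega;L^r(\R^d))}\le C_\delta\,|t-s|^{\delta/2}\,N^{-\frac12\left(1-2\alpha(d+2\delta+\beta-d/r)\right)}.
\]
Its proof splits the increment into the contribution of the fresh noise on $[s,t]$ and the contribution of $\big(e^{(t-u)\Delta}-e^{(s-u)\Delta}\big)$ for $u\in[0,s]$; with its upper endpoint frozen each integrand is an $L^r(\R^d)$-valued martingale, so one applies the Burkholder--Davis--Gundy inequality in the UMD space $L^r(\R^d)$ (for $1<r<\infty$; when $r\ge2$ the resulting square function is then handled by Minkowski's integral inequality in $L^{r/2}$, the case $1<r<2$ being treated by duality). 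The Hölder-in-time factor $|t-s|^\delta$ is produced by interpolating the semigroup difference at the price of $2\delta$ extra fractional derivatives, the smoothing powers of $e^{\tau\Delta}$, $\nabla e^{\tau\Delta}$ and $(\mathrm I-\Delta)^{\cdot/2}e^{\tau\Delta}$ are read off from Propositions~\ref{HeatSemi_Ineq_Lp} and \ref{Es:HeatSemigroup_Bessel}, and the remaining spatial factor is a Bessel norm of $V_N$ (resp.\ $\nabla V_N$), which scales in $N$ as in \eqref{eq:VN_Hbetar}; multiplying out, together with the prefactor $N^{-1}$ of the sum over the $N$ independent Brownian motions, gives the stated power of $N$. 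The second ingredient is the Garsia--Rodemich--Rumsey lemma for maps valued in the complete metric space $L^r(\R^d)$ \cite[Appendix~A]{2010_Friz}, applied together with the a.s.\ continuity of $t\mapsto\overline S_N(t,\cdot)$ in $L^r(\R^d)$ (the argument of \cite[Proposition~B.1]{2023_oliveira_richard_tomasevic}) and the trivial value $\overline S_N(0,\cdot)=0$: it converts the increment bound into a bound on $\mathbb E\big[\sup_{t\in[0,T]}\|\overline S_N(t,\cdot)\|_{L^r(\R^d)}^m\big]$. Finally I would pick $\delta=\delta(\varepsilon)$ small, absorb the $\delta$-dependent GRR constant into a factor $N^\varepsilon$, and conclude
\[
\|\overline S_N\|_{L^m(\Omega;C([0,T];L^r(\R^d)))}\le\widetilde C\,N^{-\frac12\left(1-2\alpha(d+\beta-d/r)\right)+\varepsilon}=\widetilde C\,N^{-1/2+\alpha(d+\beta-d/r)+\varepsilon}.
\]

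The hard part is that $S_N$ is not a martingale in $t$ — the integrand depends on the running endpoint through $e^{(t-s)\Delta}$ — so the supremum over $[0,T]$ cannot be obtained by a direct BDG estimate and one must go through the increment bound plus the GRR detour in an infinite-dimensional complete metric space. Inside that detour the delicate bookkeeping is to distribute $\nabla$ and $(\mathrm I-\Delta)^{\beta/2}$ between the heat semigroup and $V_N$, and to split the time integral at a scale $\tau_0\sim N^{-2\alpha}$, so as to produce the \emph{sharp} spatial exponent $d+\beta-d/r$ instead of the cruder $d+1+\beta-d/r$; balancing this gain against the blow-up of the GRR constant as $\delta\to0$ is exactly what forces the unavoidable $N^\varepsilon$ loss, and the only further technical wrinkle is the treatment of the square function when $1<r<2$.
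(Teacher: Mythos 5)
Your proposal is correct and its skeleton coincides with the paper's proof: you decompose $S_N(t)-S_N(s)$ into the fresh-noise part on $[s,t]$ and the semigroup-difference part on $[0,s]$, use that each piece becomes a martingale once its upper endpoint is frozen, apply Burkholder--Davis--Gundy, track the $N$-scaling of Bessel norms of $V_N$, and convert the H\"older-in-time increment bound into a supremum bound via the Garsia--Rodemich--Rumsey lemma and Corollary~4.4 of \cite{2021_Richard}, absorbing the small-$\delta$ loss into $N^{\varepsilon}$; your identification of the statement with Corollary~\ref{cor:est_stochastic_convolution_bessel} through $d/r'=d-d/r$ is also the correct reading. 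You deviate only in two technical choices. First, you run BDG directly in the UMD space $L^r(\R^d)$, handling the square function by Minkowski's inequality for $r\ge 2$ and by duality for $1<r<2$, whereas the paper first invokes the Sobolev embedding of $H^{d/2+\beta-d/r+\delta,2}(\R^d)$ into the target Bessel space to transfer the whole computation to $L^2(\R^d)$, where the stochastic integral is controlled by an isometry-type argument and the scaling $\|V_N\|_{H^{s,2}(\R^d)}\lesssim N^{\alpha(d/2+s)}$ is read off by Plancherel; the paper's route keeps the vector-valued stochastic analysis elementary (no square-function bookkeeping, no case distinction in $r$) but implicitly requires $r\ge 2$ for the embedding, while yours avoids the embedding at the price of genuine UMD machinery. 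Second, to reach the sharp exponent $d+\beta-d/r$ you propose splitting the time integral at $\tau_0\sim N^{-2\alpha}$, whereas the paper lets the gradient be absorbed by the semigroup via $\left\|(\mathrm{I}-\Delta)^{-\delta/2}\nabla e^{\tau\Delta}\right\|\lesssim \tau^{(\delta-1)/2}$ and, for the semigroup-difference term, interpolates between an estimate gaining $(t-s)^{1/2}$ at the cost of an extra $N^{\alpha}$ and a crude estimate with no time gain; both devices give the same exponent up to the unavoidable $N^{\varepsilon}$, so the difference is one of bookkeeping rather than substance.
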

    
\begin{proof}
To prove this estimate, we will show that $$ \|S_N(t,x)-S_N(s,x)\|_{L^m(\Omega;H^{\beta,r}(\R^d))}<N^{-1/2+\alpha(d+\beta - d/r+2\theta)}C^*(t-s)^{\theta/2}, \mbox{ with } t\ge s, \mbox{ and } \theta\in (0,1].$$ The claim then follows from Corollary 4.4 in \cite{2021_Richard} in conjunction with the Garsia-Rodemich-Rumsey continuity lemma for Banach spaces \cite{2010_Friz}. 

We begin by noticing that, by fixing $N \in \mathbb{N}$, we have 
\begin{align*}
|S_N(t,x)-S_N(s,x)|& \le  \left| \frac{1}{N}\sum_{i=1}^N \int_s^t \nabla e^{(t-u)\Delta}V_N(x-X_u^i) \ dW_u^i \right| \\ &+ \left| \frac{1}{N}\sum_{i=1}^N \int_0^s \nabla e^{(s-u)\Delta}\left(e^{(t-s)\Delta}V_N(x-X_u^i) - V_N(x-X_u^i)\right)\ dW_u^i \right| \\ & =I_1(s,t,x) + I_2(0,s,x).
\end{align*}

The terms $I_1(s,t,x)$ and $I_2(0,s,x)$ are not martingales due to the presence of the heat semigroup; however, it is straightforward to verify that $I_1(s,r_1,x)$ is a martingale for any $r_1 \in [s,t)$, and that $I_2(0,r_2,x)$ is a martingale for any $r_2 \in [0,s)$. 

We begin by estimating the quantity $$\|I_1(s,r_1,x)\|^m_{L^m(\Omega; H^{\beta,r}(\R^d))}=\mathbb{E}\left[ \left\| \frac{1}{N}\sum_{i=1}^N  \left(\mathrm{I}-\Delta \right)^{\frac{\beta}{2}}\int_s^{r_1} \nabla e^{(t-u)\Delta}V_N(x-X_u^i) \ dW_u^i \right\|^m_{L^r(\R^d)}\right].$$

Since $I_1(s,r_1,x)$ is a martingale for any $r_1 \in [s,t)$, we have 
$$\|I_1(s,r_1,x)\|^m_{L^m(\Omega; H^{\beta,r}(\R^d))}=\mathbb{E}\left[ \left\| \frac{1}{N}\sum_{i=1}^N  \int_s^{r_1} \left(\mathrm{I}-\Delta \right)^{\frac{\beta}{2}} \nabla e^{(t-u)\Delta}V_N(x-X_u^i) \ dW_u^i \right\|^m_{L^r(\R^d)}\right].$$ 

By applying the embedding theorem for Bessel potential spaces, see \cite{Triebel_1978}, it follows that
$$
\|I_1(s,r_1,x)\|^m_{L^m(\Omega; H^{\beta,r}(\R^d))}\le C_{\beta,r,2,d} \mathbb{E} \left[ \left\| \frac{1}{N} \sum_{i=1}^N \int_s^{r_1} (\mathrm{I} - \Delta)^{\frac{d/2 + \beta - d/r}{2}} \nabla e^{(t-u)\Delta} V_N  \left(x-X_u^i\right) \ dW^i_u \right\|_{L^2(\R^d)}^m \right]. 
$$

Combining the Burkholder-Davis-Gundy inequality in UMD spaces \cite{2007_vanNeerven_veraar_weis} with the Fubini-Tonelli theorem yields 

\begin{align*}
\|I_1(s,r_1,x)\|^m_{L^m(\Omega; H^{\beta,r}(\R^d))}\le  C_{m,2}C_{\beta,r,2,d} \mathbb{E} \left[  \right.  \left( \right. & \frac{1}{N^2} \sum_{i=1}^N  \int_s^{r_1} \left\| \right. (\mathrm{I} - \Delta)^{\frac{d/2 + \beta - d/r}{2}} \nabla e^{(t-u)\Delta}  \\ & V_N  \left(x-X_u^i\right) \left. \right\|^2_{L^2(\R^d)} \, du \left. \right)^{m/2} \left.  \right]. 
\end{align*}

Now, we estimate the argument of the expected value and since the operators commute, we have, 
\begin{align*}
& \frac{1}{N^2} \sum_{i=1}^N  \int_s^{r_1} \left\| (\mathrm{I} - \Delta)^{\frac{d/2 + \beta - d/r}{2}} \nabla e^{(t-u)\Delta} V_N  \left(x-X_u^i\right) \right\|^2_{L^2(\R^d)} \ du  \\ = &\frac{1}{N^2} \sum_{i=1}^N  \int_s^{r_1} \left\| (\mathrm{I} - \Delta)^{-\frac{\delta}{2}} \nabla e^{(t-u)\Delta} (\mathrm{I} - \Delta)^{\frac{d/2 + \beta - d/r+\delta}{2}} V_N  \left(x-X_u^i\right) \right\|^2_{L^2(\R^d)} \ du \\ \le & C_\Delta^\prime \, \frac{1}{N}\int_s^{r_1}(t-u)^{\delta-1} \|V_N\|^2_{H^{d/2 + \beta - d/r+\delta,2}(\R^d)} \ du= C_\Delta^\prime \, \frac{1}{N}\frac{(t-s)^{\delta} - (t-r_1)^{\delta}}{\delta}\|V_N\|^2_{H^{d/2 + \beta - d/r+\delta,2}(\R^d)}. 
\end{align*}

Now, let $h=d/2 + \beta - d/r$

\begin{align*}
    \|V_N\|^2_{H^{h+\delta,2}(\R^d)}=N^{2d\alpha}\| (\mathrm{I} - \Delta)^{\frac{h+\delta}{2}} V(N^\alpha x) \|_{L^2(\R^d)}^2=N^{2d \alpha} \|\mathcal{F}^{-1}\left(\left(1+|\xi|^2\right)^{(h+\delta)/2}\mathcal{F}(V(N^\alpha x))\right)\|_{L^2(\R^d)}^2. 
\end{align*}
After a change of variable one easily obtains that 
$$\mathcal{F}(V(N^\alpha x))=N^{-d\alpha}\widehat{V}\left(\frac{\xi}{N^{\alpha}}\right),$$
hence, by also applying the Plancherel theorem, it follows 

$$ \|V_N\|^2_{H^{h+\delta,2}(\R^d)}=\left\|\mathcal{F}^{-1}\left(\left(1+|\xi|^2\right)^{(h+\delta)/2}\widehat{V}\left(\frac{\xi}{N^{\alpha}}\right)\right)\right\|_{L^2(\R^d)}^2=\left\|\left(1+|\xi|^2\right)^{(h+\delta)/2}\widehat{V}\left(\frac{\xi}{N^{\alpha}}\right)\right\|_{L^2(\R^d)}^2.$$

Again, by a change of variable and  the Plancherel theorem,   
\begin{align*}
 \left\|\left(1+|\xi|^2\right)^{(h+\delta)/2}\widehat{V}\left(\frac{\xi}{N^{\alpha}}\right)\right\|_{L^2(\R^d)}^2& =N^{d\alpha}\int_{\R^d}\left(1+N^{2\alpha}|\eta|^2\right)^{h+\delta}\widehat{V}^2\left(\eta\right) \ d\eta \\ & < N^{d\alpha+2(h+\delta)\alpha}\int_{\R^d}\left(1+|\eta|^2\right)^{h+\delta}\widehat{V}^2\left(\eta\right) \ d\eta \\ &= N^{d\alpha+2(h+\delta)\alpha} \left\|\left(1+|\eta|^2\right)^{h+\delta}\widehat{V}\left(\eta\right)\right\|^2_{L^2(\R^d)} \\ &= N^{d\alpha+2(h+\delta)\alpha} \|V\|^2_{H^{2(h+\delta),2}(\R^d)}.
\end{align*}
Let $C_1=C_\Delta^{\prime} C_{m,2}C_{\beta,r,2,d} \, \delta^{-1/2}\|V\|_{H^{2(h+\delta),2}(\R^d)}$, in conclusion, by letting $r_1 \to t$, we have

\begin{equation}\label{Est : I_1}
\|I_1(s,t,x)\|_{L^m(\Omega; H^{\beta,r}(\R^d))}<N^{-1/2+\alpha(d/2+h+\delta)}C_1(t-s)^{\delta/2} . 
\end{equation}

In a similar way, for any $r_2 \in [0,s)$, it follows 
\begin{align*}
&\|I_2(0,r_2,x)\|^m_{L^m(\Omega; H^{\beta,r}(\R^d))} \\ &\le C_{m,2}C_{\beta,r,2,d}\mathbb{E} \left[  \left(\frac{1}{N}  \int_0^{r_2} \left\|  (\mathrm{I} - \Delta)^{-\frac{\delta}{2}}  \nabla e^{(t-u)\Delta} (\mathrm{I} - \Delta)^{\frac{h+\delta}{2}}\left( e^{(t-s)\Delta}V_N(x) - V_N(x) \right) \right\|^2_{L^2(\R^d)} \ du \right)^{m/2}  \right] \\ & \le C_{m,2}C_{\beta,r,2,d} C_{\Delta}^\prime\mathbb{E} \left[  \left(\frac{1}{N}  \int_0^{r_2}   (s-u)^{\delta-1}\left\|(\mathrm{I} - \Delta)^{\frac{h+\delta}{2}}\left( e^{(t-s)\Delta}V_N(x) - V_N(x) \right) \right\|^2_{L^2(\R^d)} \ du \right)^{m/2}  \right]. 
\end{align*}
Now, let $f$ be a sufficiently regular function, thanks to the Plancherel theorem, we observe that 
\begin{align*}
    \|e^{(t-u)\Delta } f -f \|_{L^2(\R^d)}^2& =\int_{\R^d} \left(1-e^{-4\pi^2(t-s)|\xi|^2} \right) |\widehat{f}(\xi)|^2 \ d\xi \le 4\pi^2(t-s) \int_{\R^d} |\xi|^2 |\widehat{f}(\xi)|^2 \, d\xi \\ &=(t-s)\|\mathcal{F}\left(\nabla f\right)\|^2_{L^2(\R^d)}=(t-s)\|\nabla f \|_{L^2(\R^d)}^2. 
\end{align*}
Therefore, by choosing $f=(\mathrm{I} - \Delta)^{\frac{h+\delta}{2}} V_N$, 

\begin{align*}
    \|I_2(0,r_2,x)\|^m_{L^m(\Omega; H^{\beta,r}(\R^d))} & \le C_{m,2}C_{\beta,r,2,d} C_{\Delta}^\prime \mathbb{E}\left[\left(\frac{1}{N} \int_0^{r_2} \frac{t-s}{(s-u)^{1-\delta}}  \left\|V_N\right\|_{H^{\delta+h+1,2}(\R^d)}^2 \ du\right)^{m/2}\right].
\end{align*}

Let $C_{2,1}=C_{m,2}C_{\beta,r,2,d}C_{\Delta}^\prime \delta^{-1/2} s^{\delta/2}\|V\|_{H^{2(h+\delta+1),2}(\R^d)}$. In conclusion, by letting $r_2 \to s$, 
\begin{align}\label{Est: I_2 part 1}
     \|I_2(0,s,x)\|_{L^m(\Omega; H^{\beta,r}(\R^d))} \le  N^{-1/2+\alpha(d/2+h+\delta+1)} C_{2,1}(t-s)^{1/2} .
\end{align}

Although the regularity in $(t-s) $ is good, we paid a factor $N^\alpha$ which will penalise the estimate too heavily. Hence, we also observe that  
\begin{align*}
    &\left\|(\mathrm{I} - \Delta)^{\frac{h+\delta}{2}}\left( e^{(t-s)\Delta}V_N(x) - V_N(x) \right) \right\|^2_{L^2(\R^d)}  \\ & \le  \left\|(\mathrm{I} - \Delta)^{\frac{h+\delta}{2}} e^{(t-s)\Delta}V_N(x)   \right\|^2_{L^2(\R^d)}+\left\|(\mathrm{I} - \Delta)^{\frac{h+\delta}{2}}  V_N(x)  \right\|^2_{L^2(\R^d)} \\ & \le  
    2\left\|(\mathrm{I} - \Delta)^{\frac{h+\delta}{2}}  V_N(x)  \right\|^2_{L^2(\R^d)} \\ & \le 2N^{d \alpha +2(h+ \delta)\alpha} \|V\|^2_{H^{2(h+\delta),2}(\R^d)}. 
\end{align*}
Therefore, by defining $C_{2,2}=2^{1/2}C_{m,2}C_{\beta,r,2,d}C_{\Delta}^\prime \delta^{-1/2} s^{\delta/2}\|V\|_{H^{2(h+\delta),2}(\R^d)}$, we have
\begin{align}\label{Est: I_2 part 2}
     \|I_2(0,s,x)\|_{L^m(\Omega; H^{\beta,r}(\R^d))} \le  N^{-1/2+\alpha(d/2+h+\delta)} C_{2,2}.
\end{align}
For any $\theta \in [0,1]$, interpolation between \eqref{Est: I_2 part 1} and \eqref{Est: I_2 part 2} yields  

$$  \|I_2(0,s,x)\|_{L^m(\Omega; H^{\beta,r}(\R^d))} \le  N^{-1/2+\alpha(d/2+h+\delta+\theta)} C_{2}(t-s)^{\theta/2}, $$

where $C_2=2^{\theta/2}C_{m,2}C_{\beta,r,2,d}C_{\Delta}^\prime \delta^{-1/2} s^{\delta/2} \|V\|_{H^{2(h+\delta+\theta),2}(\R^d)}.$

Finally, by choosing $\theta=\delta\in (0,1]$ we conclude that, 

$$\|S_N(t,x)-S_N(s,x)\|_{L^m(\Omega;H^{\beta,r}(\R^d))}<N^{-1/2+\alpha(d/2+h+2\theta)}C^*(t-s)^{\theta/2},$$

where $C^*=2C_{m,2}C_{\beta,r,2,d}C_{\Delta}^\prime \theta^{-1/2} \max\left(\|V\|_{H^{2(h+\theta),2}(\R^d)},  (2s)^{\theta/2} \|V\|_{H^{2(h+2\theta),2}(\R^d)} \right)$.

By applying Corollary 4.4 in \cite{2021_Richard} with $\theta=\varepsilon/2\alpha$, $\eta=\theta/2$, $m\ge \max(1,m_0)$, and $m\eta >1$, we have

$$\left\|S_N\right\|_{L^m(\Omega; C([0,T];H^{\beta,r}(\R^d)))}  \le \widetilde{C}N^{-1/2+\alpha(d+\beta- d/r)+\varepsilon},$$
where $\widetilde{C}$ depends only on $C^*, m,m_0,\eta,T$.

\end{proof}

\section*{Acknowledgments}
The authors are  members of GNAMPA (Gruppo Nazionale per l’Analisi Matematica, la Probabilità e le loro Applicazioni) of the Italian Istituto Nazionale di Alta Matematica (INdAM). 
\printbibliography

@article{2021_Richard,
  title={Discrete-time simulation of stochastic Volterra equations},
  author={Richard, A. and Tan, X. and Yang, F.},
  journal={Stochastic Process. Appl.},
  volume={141},
  year={2021},
}

@book{2010_Friz,
  title     = {Multidimensional Stochastic Processes as Rough Paths: Theory and Applications},
  author    = {Friz, P. K. and Victoir, N. B.},
  year      = {2010},
  publisher = {Cambridge University Press},
  series    = {Cambridge Studies in Advanced Mathematics},
  volume    = {120},
}

@book {1983_Pazy,
    author = {Pazy, A.},   
    TITLE = {Semigroups of linear operators and applications to partial differential equations},
    SERIES = {Applied Mathematical Sciences, vol. 44},
    EDITOR = {d'Onofrio, Alberto},
 PUBLISHER = {Springer-Verlag, New York},
      YEAR = {1983},
    
}

@article{2007_vanNeerven_veraar_weis,
  author    = {van Neerven, J. M. A. M. and Veraar, M. C. and Weis, L.},
  title     = {Stochastic integration in {UMD} Banach spaces},
  journal   = {The Annals of Probability},
  volume    = {35},
  number    = {4},
  year      = {2007},
 }

@misc{2025_MRU_arxiv, 
      title={Strong solutions of stochastic SDE with singular drift by a regularization probabilistic approach. Application to the
Lennard-Jones potential.}, 
      author={ Morale, D.  and  Rui, G. and   Ugolini, S.},
      year={2025},
      eprint={2508.03205v1},
      archivePrefix={arXiv},
      primaryClass={math.PR}
}

@misc{2025_JMMRU_Arxiv, 
      title={A hybrid model of sulphation reactions:   stochastic particles   in a random continuum environment}, 
      author={J\"averg\r{a}rd, J. and Morale, D.  and Muntean, A.  and  Rui, G. and   Ugolini, S.},
      year={2025},
      eprint={2503.01856},
      archivePrefix={arXiv},
     % primaryClass={math.PR},
}

@article{1985_Oelschlaeger,
author = {Oelschläger, K.},
year = {1985},
title = {A law of large numbers for moderately interacting diffusion processes},
volume = {69},
journal = {Probability Theory and Related Fields},
}

@article{1990_Oelschlaeger,
title = {Large systems of interacting particles and the porous medium equation},
journal = {Journal of Differential Equations},
volume = {88},
number = {2},
year = {1990},
author = { Oelschläger, K.},
}

@article{2009_Capasso_Morale,
author = { Capasso, V. and Morale, D. and Oelschl\"ager"},
title = {Asymptotic Behavior of a System of Stochastic Particles Subject to Nonlocal Interactions},
journal = {Stochastic Analysis and Applications},
volume = {27},
number = {3},
pages = {574--603},
year = {2009},
publisher = {Taylor \& Francis},
doi = {10.1080/07362990902844421}
}

@misc{Amar_2022,
      title={Gradient estimates for the heat semigroup on forms in a complete Riemannian manifold}, 
      author={Amar, E.},
      year={2022},
      eprint={2003.03985},
      archivePrefix={arXiv},
      primaryClass={math.AP},
      
}

@article{2007_morale_Burger_VK,
title = {On an aggregation model with long and short range interactions},
journal = {Nonlinear Analysis: Real World Applications},
volume = {8},
number = {3},
pages = {939-958},
year = {2007},
author = {Burger, M. and  Capasso, V. and  Morale, D.},
}

@misc{2024_morale_tarquini_ugolini,
      title={A probabilistic interpretation of a non-conservative and path-dependent nonlinear reaction-diffusion system for the marble sulphation in Cultural Heritage}, 
      author={Morale, D. and Tarquini, L. and Ugolini, S.},
      year={2024},
      eprint={2407.19301},
      archivePrefix={arXiv},
      primaryClass={math.PR}
}

@article{2024_anita,
title = {Controlling a generalized Fokker–Planck equation via inputs with nonlocal action},
journal = {Nonlinear Analysis},
volume = {241},
pages = {113476},
year = {2024},
author = {Ani\cb{t}a, \cb{S}. L.} 
}

@article{fournier_2017,
  TITLE = {Stochastic particle approximation of the Keller-Segel equation and two-dimensional generalization of Bessel processes},
  AUTHOR = {Fournier, N. and Jourdain, B.},
  JOURNAL = {The Annals of Applied Probability},
  VOLUME = {27},
  NUMBER = {5},
  PAGES = {2807-2861},
  YEAR = {2017},
}

@article{Meleard_1996,
    title={Asymptotic behaviour of some interacting particle systems; McKean-Vlasov and Boltzmann models},
    journal={Probabilistic models for nonlinear partial differential equations},
  author={ M{\'e}l{\'e}ard, S.},
  year={1996},
}

@article{Flndoli_2019,
    title={Uniform convergence of proliferating particles to the FKPP equation},
    journal={Journal of Mathematical Analysis and Applications},
  author={Flandoli, F. and Leimbach, M. and Olivera, C. },
  year={2019},
}

@article{2016_Liu_Yang,
author = {Liu, JG. and Yang, R.},
year = {2016},
pages = {},
title = {Propagation of chaos for large Brownian particle system with Coulomb interaction},
volume = {3},
journal = {Res Math Sci },
}

@article{2019_LiuYang,
  author= {Liu, J.G. and Yang, R.},
  title   = {Propagation of chaos for the Keller--Segel equation with a logarithmic cut-off},
  journal = {Methods and Applications of Analysis},
  volume  = {26},
  number  = {4},
  pages   = {319--348},
  year    = {2019}
}

@article{Wales_2024,
author = {Schwerdtfeger, P. and Wales, DJ.},
year = {2024},
pages = {},
title = {100 Years of the Lennard-Jones Potential},
volume = {},
journal = {J. Chem. Theory Comput.},
}

@article{Nagai_2011,
    author ={ Nagai, T.},
    title = {Global existence and decay estimates of solutions to a
  parabolic-elliptic system of drift-diffusion type in $\R^2$},
    journal ={Differential Integral Equations},
    pages = {29-68},
    year = {2011},
 vol={24},
}

@book{Evans_2010,
    author = {Evans, L. C.},
    title = {Partial differential equations. Second edition},
    publisher = {Amer. Math. Soc.},
    year = {2010},
    doi = {}
}

@book{Henry_1981,
    author = {Henry, D.},
    title = {Geometric theory of semilinear parabolic equations},
    publisher = {Springer-Verlag Berlin Heidelberg},
    year = {1981},
    doi = {}
}

@book{Hormander_2003,
    author = {H\"ormander, L.},
    title = {The analysis of linear partial differential operators I. Distribution theory and Fourier analysis},
    publisher = {Springer},
    year = {2003},
    doi = {}
}

@book{Ikeda_Watanabe_1988,
    author = {Ikeda, N. and Watanabe, S.},
    title = {Stochastic differential equations and diffusion processes},
    publisher = {North-Holland},
    year = {1988},
    doi = {}
}

@book{Triebel_1983,
    author = {Triebel, H.},
    title = {Theory of function spaces },
    publisher = {Birkhäuser},
    year = {1983},
    doi = {}
}

@book{Triebel_1978,
    author = {Triebel, H.},
    title = {Interpolation theory, function spaces, differential operators},
    publisher = {North-Holland publiscing Co., Amsterdam-New York},
    year = {1978},
    doi = {}
}

@article{2016_Duerinckx,
    author ={Duerinckx, M.} ,
    title = {Mean-field limits for some Riesz interaction gradient flows},
    journal ={SIAM Journal on Applied Mathematics Analysis},
    year = {2016},
pages={2269 - 2300},
vol={48},
num={3}
}

@article{2015_HaurayJabin,
  author  = {Hauray, M. and Jabin, P.E.},
  title   = {Particles approximations of {V}lasov equations with singular forces: Propagation of chaos},
  journal = {Annales Scientifiques de l'{\'E}cole Normale Sup{\'e}rieure},
  volume  = {48},
  number  = {4},
  pages   = {891--940},
  year    = {2015},
}

@article{2014_Jabin,
  author  = {Jabin, P.E.},
  title   = {A review of the mean field limits for {V}lasov equations},
  journal = {Kinetic and Related Models},
  volume  = {7},
  number  = {4},
  pages   = {661--711},
  year    = {2014},
%  doi     = {10.3934/krm.2014.7.661}
}

@book{Grafakos_2014,
    author = {Grafakos, L.},
    title = {Classical fourier analysis},
    publisher = {Springer, New York},
    note = {Third edition},
    year = {2014},
    doi = {}
}

@book{Samko_1993,
    author = { Samko, S.G. and  Kilbas, A.A. and  Marichev, O.I.},
    title = {Fractional integrals and derivatives},
    publisher = {Gordon and Breach Science Publishers S. A.},
    year = {1993},
    doi = {}
}

@book{Karatzas,
    author = { Karatzas, I. and  Shreve, S. E.},
    title = {Brownian motion and stochastic calculus},
    note = {Second edition},
    publisher = {Springer, New York},
    year = {1991},
    doi = {}
}

@article{1981_Veretennikov,
    author ={ Veretennikov, A.} ,
    title = {On strong solutions and explicit formulas for solutions of stochastic integral equations},
    journal ={Math. USSR Sb.} ,
    year = {1981},
pages={387-403},
vol={39},
num={3}
}

@article{2023_oliveira_richard_tomasevic,
    author ={Olivera, C. and Richard, A. and Toma\v{s}evi\'c, M.} ,
    title = {Quantitative particle approximation of nonlinear
Fokker-Planck equations with singular kernel},
    journal ={Ann. Sc. Norm. Super. Pisa Cl. Sci.} ,
    year = {2023},
pages={691-749},
vol={XXIV},
num={5}
}

@conference{2025_Mach2023_MRU,
    author = "Morale, D.  and  Rui, G. and Ugolini, S.",
    title = {A stochastic interacting particle model for the marble sulphation process}, 
    editor="Bretti, G.
and Cavaterra, C.
and Solci, M.
and Spagnuolo, M.",    
    booktitle="Mathematical Modeling in Cultural Heritage",
year="2025",
publisher="Springer Nature Singapore",
pages="17--34",
}

@article{2016_MZCJ,
    author = "Morale, D. and Zanella, M. and Capasso, V. and Jäger, W.",
    title = "Stochastic Modeling and Simulation of Ion Transport through Channels",
    journal = "Multiscale, Multiscale Modeling and Simulation",
    volume = {14},
    year = "2016",
    pages="113–137"
}

@article{2023_FLR,
title = {The Mathematical modeling of Cancer growth and angiogenesis by an individual based interacting system},
journal = {Journal of Theoretical Biology},
volume = {562},
pages = {111432},
year = {2023},
author = { Flandoli, F. and  Leocata, M. and  Ricci, C.}
}

@book{2018_Carmona_delarue,
    author = {Carmona, R. and Delarue, F.},
    title = {Probabilistic Theory of Mean Field Games with Applications II.},
    publisher = {Springer Cham},
    year = {2018},
}

@article{2005_Nedea_molecular_dynamics,
  title = {Hybrid method coupling molecular dynamics and Monte Carlo simulations to study the properties of gases in microchannels and nanochannels},
  author = {Nedea, S. V. and Frijns, A. J. H. and van Steenhoven, A. A. and Markvoort, A. J. and Hilbers, P. A. J.},
  journal = {Phys. Rev. E},
  volume = {72},
  issue = {1},
  pages = {016705},
  numpages = {10},
  year = {2005},
  publisher = {American Physical Society},
 % doi = {10.1103/PhysRevE.72.016705},
%  url = {https://link.aps.org/doi/10.1103/PhysRevE.72.016705}
}

@article{Meleard_Coppoletta,
    title = {A propagation of chaos result for a system of particles with moderate interaction},
    journal = {Stochastic Processes and their Applications},
    volume = {26},
    pages = {317-332},
    year = {1987},
    author = {S.  M{\'e}l{\'e}ard and S. Roelly-Coppoletta}
}

@article{2016_Russo,
  title={Probabilistic representation of a class of non conservative nonlinear Partial Differential Equations},
  author={A. Le Cavil and N. Oudjane and F. G. Russo},
  journal={ALEA, Lat. Am. J. Probab. Math. Stat.},
  year={2016},
vol={13},
pages={1189–1233}
}

\end{document}